\newcommand{\N}{\mathbb{N}}
\newcommand{\Z}{\mathbb{Z}}
\newcommand{\R}{\mathbb{R}}
\newcommand{\E}{\mathbb{E}}
\newcommand{\F}{\mathcal{F}}
\newcommand{\X}{\mathcal{X}}
\newcommand{\Prob}{\mathbb{P}}
\numberwithin{equation}{section}
\newtheorem{theorem}{Theorem}[section] 
\newtheorem{corollary}[theorem]{Corollary}
\newtheorem{proposition}[theorem]{Proposition}
\newtheorem{lemma}[theorem]{Lemma}
\theoremstyle{definition}
\newtheorem{definition}[theorem]{Definition}
\theoremstyle{remark}
\newtheorem{remark}[theorem]{Remark}
\def\thanks#1{\protected@xdef\@thanks{\@thanks
        \protect\footnotetext{#1}}}
\title{On the stationary measures of two variants of the voter model}
\date{}
\author{Jhon Astoquillca\thanks{Bernoulli Institute, University of Groningen. } \thanks{Groningen Cognitive System and Materials Center, University of Groningen} \thanks{j.k.astoquillca.aguilar@rug.nl} \thanks{https://orcid.org/0000-0002-8913-3867} }
\begin{document}

\maketitle
% \vspace{-8mm}
% \begin{center}
%     j.k.astoquillca.aguilar@rug.nl \quad https://orcid.org/0000-0002-8913-3867      
% \end{center}

\begin{abstract}
In the voter model, vertices of a graph (interpreted as voters) adopt one out of two opinions (0 and 1), and update their opinions at random times by copying the opinion of a neighbor chosen uniformly at random. This process is dual to a system of coalescing random walks. The duality implies that the set of stationary measures of the voter model on a graph is linked to the dynamics of the collision of random walks on this graph. By exploring the key ideas behind this relationship, we characterize the sets of stationary measures for two variations of the voter model: first, a version that incorporates interchanging of opinions among voters, and second, the voter model on dynamical percolation. To achieve these results, we analyze the collision properties of random walks in two contexts: first, with a swapping behavior that complicates collisions, and second, with random walks defined on a dynamical percolation environment.
\end{abstract}

{
  \small	
  \textbf{\textit{Keywords:}} Voter model $\cdot$ Collision of random walks on graphs $\cdot$ Stationary distributions $\cdot$ Dynamical percolation $\cdot$ Duality $\cdot$ Exclusion process \\
  
  \textbf{\textit{Mathematics Subject Classification (2010)}} 60K35 $\cdot$ 82C22
}
%\tableofcontents

%\listoffigures

\section{Introduction}

The voter model on the locally finite graph $G=(V,E)$ is a Markov process, denoted by~$(\eta_t)_{t \geq 0}$, with the configuration space $\{0,1\}^V$ and stochastic dynamics described informally as follows: each vertex~$x$ of~$V$ is provided with an independent exponential clock of rate 1. Each time the clock of~$x \in V$ rings, it updates its current state by copying the state of~$y$ that is chosen uniformly at random among all vertices neighboring $x$. 

This interacting particle system can be taken as a model for the collective behavior of voters who constantly update their opinions. It was introduced by Clifford and Sudbury in \cite{CliffordSudbury73} and Holley and Liggett in \cite{HolleyLiggett75}, and is treated in the expository texts \cite{Liggett2005}, \cite{Swart2016}.

Given $\alpha \in [0,1]$, let $\pi^\mathrm{site}_\alpha$ denote the Bernoulli($\alpha$) product measure on~$\{0,1\}^{V}$. It is well-established in the literature of the voter model (see aforementioned expository texts) that for any $\alpha \in [0,1]$, starting the process at time 0 from $\pi^\mathrm{site}_\alpha$, its distribution at time $t$ converges weakly, as $t \to \infty$, to a limiting probability measure $\mu_\alpha$. This gives rise to a one-parameter family of (distinct) distributions~$\{\mu_\alpha: \alpha \in [0,1]\}$. Note that $\mu_0$ and $\mu_1$ correspond to Dirac masses on the identically 0 and identically~1 configurations, respectively; we call these the consensus measures. Moreover, 
\begin{itemize}
    \item[\textbf{(A)}] if two independent continuous-time random walks on $G$  meet almost surely, then~$\mu_0$ and~$\mu_1$ are the only extremal stationary measures of~$(\eta_t)$ (that is, the only stationary measures that cannot be expressed as non-trivial convex combinations of other stationary measures), and
    $$ \forall \alpha, \hspace{0.5mm} \mu_\alpha = \alpha \mu_1 + (1-\alpha)\mu_0. $$
    \item[\textbf{(B)}] If two independent continuous-time random walks on $G$  meet finitely many times almost surely, and the tail $\sigma$-algebra of a simple random walk on $G$ is trivial, then $\mu_\alpha$ is extremal for any $\alpha$, and in fact, the set of extremal measures of the voter model is equal to the family $\{\mu_\alpha: \alpha \in [0,1]\}$. 
\end{itemize}
% The above two points can be proved using the same techniques that are applicable to the classical voter model on~$\mathbb Z^d$. In Section \ref{MetaTheoremSection}, we briefly present their proofs, as they serve as the cornerstone for establishing the identical characterizations of two variations of the voter model.     

Although these statements, with this level of generality, cannot be found in the existing literature to the best of our knowledge, their proofs are straightforward extensions of those of the corresponding results on~$\Z^d$. In Section \ref{MetaTheoremSection}, we briefly present the proofs, as they serve as the cornerstone for establishing the identical characterizations of two variations of the voter model.

Let us briefly describe two consequences of these results. First, on the Euclidean lattice~$(\Z^d,E(\Z^d))$, for $d=1$ or 2, the only extremal stationary measures of the voter model are $\mu_0$ and $\mu_1$. For~$d \geq 3$, the set of extremal stationary measures is $\{\mu_\alpha$, $\alpha \in [0,1]\}.$  

Second, we consider the Bernoulli bond percolation on the Euclidean lattices (see \cite{Grimmett99}). Let~$p_c (\in (0,1) \text{ for } d \geq 2 )$ be the critical parameter of the model; for any~$d \geq 2$ and~$p > p_c$ there exists a unique infinite cluster~$\mathcal{C}_\infty$ a.s. If~$d \ge 3$, this cluster almost surely satisfies the two properties mentioned in point~\textbf{(B)} above: two independent continuous-time simple random walks on $\mathcal{C}_\infty$ meet finitely many times (Theorem~1.2 of \cite{Chen2010}) and the tail $\sigma$-algebra of a continuous-time random walk on~$\mathcal{C}_\infty$ is trivial (Theorem~4b of \cite{Barlow2004}). Therefore, the set of extremal stationary measures for the voter model on $\mathcal{C}_\infty$ is the family~$\{\mu_\alpha$, $\alpha \in [0,1]\}$. 

The first variant of the classical voter model investigated in this article arises by adding another dynamic to the voters: they can interchange their opinions. Given $\mathsf{v} \in (0,\infty)$ and~$d \ge 2$ integer, the voter model with stirring on a~$d$-regular graph $G $ is a Markov process, denoted by~$(\xi_t)_{t \geq 0}$, with the configuration space $\{0,1\}^V$ and stochastic dynamics described informally as follows: each site $x \in V$ is provided with an exponential clock of rate~1, and each edge~$e$ of~$E$ is provided with an exponential clock of rate $\mathsf{v}$ (all independently). Each time the clock of $x$ rings, it updates its current state by copying the state of $y$ that is chosen uniformly at random among its neighbors. Each time the clock of~$e = \{x,y\}$ rings, the current states of~$x$ and~$y$ are interchanged. In Section~\ref{IVMSection}, we give the formal definition of the model.

\begin{theorem}\label{ThmCharacterizationIVM}
If the voter model with stirring on a $d$-regular graph~$G$~($d \ge 2$) with parameter~$\mathsf v \in (0,\infty)$ is started from $\pi^\mathrm{site}_\alpha$, $\alpha \in [0,1]$, then its distribution at time~$t$ converges weakly as~$t \to \infty$ to a measure~$\mu^\mathrm{stir}_\alpha$. This family of (distinct) measures is stationary for~$(\xi_t)$. Moreover,
\begin{enumerate}
    \item if two independent continuous-time simple random walks on $G$ meet almost surely, then the only extremal stationary measures for~$(\xi_t)$ are~$\mu^\mathrm{stir}_0$ and~$\mu^\mathrm{stir}_1$: the consensus measures; 
    \item if two independent continuous-time simple random walks on $G$ meet finitely many times almost surely, and the tail $\sigma$-algebra of a simple random walk on $G$ is trivial, then the set of extremal stationary measures for $(\xi_t)$ is~$\{\mu^\mathrm{stir}_\alpha, \alpha \in [0,1]\}$. 
\end{enumerate}
\end{theorem}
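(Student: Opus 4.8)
The plan is to run the entire argument through the duality framework of Section~\ref{MetaTheoremSection}: once $(\xi_t)$ is shown to be dual to a suitable system of coalescing random walks, the convergence from $\pi^{\mathrm{site}}_\alpha$, the stationarity of the family $\{\mu^{\mathrm{stir}}_\alpha\}$, and the two extremality dichotomies all follow from the very reasoning used for statements \textbf{(A)} and \textbf{(B)}, provided the dual walks have the right collision behaviour. Concretely, I would carry out three steps: (1) identify the dual of the stirring dynamics; (2) transfer the hypotheses of the theorem --- stated for \emph{independent simple} random walks --- to the dual walks; and (3) invoke the meta-theorem verbatim. The only genuinely new input is step (2): the dual walks are correlated by the swapping, so I must show that this correlation changes neither whether two of them collide almost surely nor whether they collide only finitely often.

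First I would compute the dual. Writing the generator as $L = L_{\mathrm{voter}} + \mathsf v\, L_{\mathrm{stir}}$ and using the duality function $H(\eta,A)=\prod_{x\in A}\eta(x)$ for finite $A\subset V$, the voter part dualises in the classical way to coalescing random walks, while the stirring part is self-dual: swapping the states at the endpoints of an edge $e=\{x,y\}$ acts on $H(\cdot,A)$ exactly as the transposition of $A$ along $e$. Hence $L\,H(\cdot,A)(\eta) = \widehat L\, H(\eta,\cdot)(A)$, where $\widehat L$ generates a finite system of indistinguishable particles in which each particle jumps to a uniformly chosen neighbour at rate $1$ (voter part) and, in addition, every edge swaps its endpoints' occupation at rate $\mathsf v$ (stirring part), two particles coalescing whenever a voter jump places one onto another. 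I would record two consequences: a single dual particle is a simple random walk time-changed by the constant factor $1+d\mathsf v$, so its tail $\sigma$-algebra is trivial exactly when that of simple random walk on $G$ is; and, via $\int H(\eta,A)\,d\pi^{\mathrm{site}}_\alpha = \alpha^{|A|}$, duality yields $\E_{\pi_\alpha}[H(\xi_t,A)] = \E_A[\alpha^{|\widehat A_t|}]$, whose convergence as the number of surviving particles $|\widehat A_t|$ decreases defines $\mu^{\mathrm{stir}}_\alpha$, establishes its stationarity, and shows the measures are distinct since the one-point marginal equals $\alpha$.

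The heart of the proof is the collision analysis, which by duality reduces to two particles. The key observation is that, for the \emph{unordered} pair, a stirring swap across the edge joining the two particles leaves the pair unchanged and is therefore invisible, while across any other edge a stirring move merely displaces one particle to an empty neighbour, exactly as an extra voter jump would. Consequently the unordered pair of dual walks has precisely the same movement generator as a pair of independent simple random walks run at rate $1+d\mathsf v$ --- the \emph{only} discrepancy being the collision rate when the particles sit at adjacent vertices: there the pair coalesces at rate $2/d$ (voter jumps directed along the shared edge), whereas the independent pair would meet at the larger rate $2/d+2\mathsf v$, the surplus $2\mathsf v$ corresponding to swaps that keep the particles adjacent instead of letting them meet. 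I would exploit this by running the two dual particles \emph{without} coalescing --- a pure two-particle swapping (exclusion-type) process $W_t$ on off-diagonal pairs --- and superimposing an independent coalescence clock firing at rate $2/d$ whenever $W_t$ is in an adjacent configuration; then the coalescence time $\tau$ satisfies $\Prob(\tau>t\mid W)=\exp\!\big(-\tfrac{2}{d}\int_0^t \mathbf 1\{W_s \text{ adjacent}\}\,ds\big)$. Thus $\tau<\infty$ almost surely iff $W$ spends infinite total time adjacent, and only finitely many collisions occur iff this time is finite. Since $W_t$ and the independent pair share movement rates everywhere except for a bounded, finite-range modification near the diagonal, the total adjacency time is infinite (respectively finite) for $W$ exactly when two independent simple random walks meet almost surely (respectively finitely often), because whether the off-diagonal pair returns near the diagonal infinitely often is governed by its behaviour at infinity, which the local modification leaves intact. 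Combining this with the hazard-rate identity gives, in case~1, coalescence almost surely, and in case~2, only finitely many collisions almost surely together with triviality of the single-walk tail; feeding these into the meta-theorem yields the two claimed characterisations.

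I expect the decisive difficulty to lie in case~1: showing that suppressing a positive fraction of the collisions (replacing them by swaps) does not destroy recurrence of the diagonal, i.e.\ that $W_t$ still visits adjacent configurations for infinite total time whenever independent walks do. The subtlety is that the exclusion interaction correlates the two particles precisely in the region where recurrence is decided, so a naive trajectory-by-trajectory coupling desynchronises, the extra swap events prolonging the adjacency excursions of $W$ relative to the independent pair. I would instead argue at the level of occupation times, treating the difference of generators as a compactly supported perturbation and showing it preserves recurrence versus transience of the adjacency set; a more hands-on alternative is a Borel--Cantelli argument establishing infinitely many disjoint adjacency excursions of $W$ together with a uniform lower bound on the probability that any given excursion produces a coalescence. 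The generality of an arbitrary $d$-regular graph --- no transitivity, no transform methods --- is what forces these soft recurrence and perturbation arguments rather than explicit computation, and controlling the dependence between successive excursions while securing the uniform per-excursion collision probability is the part I would expect to demand the most care.
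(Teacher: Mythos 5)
Your proposal is correct and follows the same overall architecture as the paper's proof: dualize the stirring dynamics to coalescing--stirring walks, observe that the unordered pair of dual walkers moves exactly like an independent pair except at adjacent configurations (where the swap across the shared edge is invisible and only suppresses part of the meeting rate), and feed the resulting collision dichotomy into the meta-theorem. The one genuine difference is how you package the comparison step. You represent the coalescence time by thinning: run the non-coalescing exclusion pair $W_t$ and attach an independent coalescence clock that fires at rate $2/d$ during adjacency, so everything reduces to whether the total adjacency time of $W$ is infinite or finite. The paper instead sets up a generic coupling of the two Markov chains (stirring--coalescing versus independent) and uses a martingale identity to bound the probability that the coupling ever breaks by a constant multiple of the expected total adjacency time $E_A[\int\Phi]$; the same identity, applied to the first collision time, converts the finite-collision hypothesis into the vanishing of this occupation time after time $t$, which is exactly the input the four steps of Theorem~\ref{MetaTheorem} require. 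These are two faces of the same reduction, and your ``hands-on alternative'' for case~1 (infinitely many adjacency excursions, each carrying a uniformly positive coalescence probability) is precisely the argument the paper gives. Two small cautions: the perturbation-of-generators route you flag as delicate is sidestepped in the paper by the couple-until-first-adjacency-then-iterate structure, which is also what you would need to make your occupation-time transfer rigorous; and in case~2 you need finiteness of the \emph{total adjacency time}, which is a priori stronger than ``finitely many meetings'' and is where the paper's identity relating $P_A(\tau_{\mathrm{coll}}<\infty)$ to $E_A[\int\Phi]$ does the work. (Your rate $1+d\mathsf v$ versus the paper's $1+\mathsf v$ is only a normalization of the edge clocks and is immaterial.)
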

When $G = \Z^d$ we also have the following result:
\begin{proposition}\label{prop_intro_ergo}
    For any $d \ge 1$ and~$\alpha \in [0,1]$, the measure~$\mu^\mathrm{stir}_\alpha$ is ergodic with respect to translations of~$\Z^d$.
\end{proposition}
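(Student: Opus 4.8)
The plan is to deduce ergodicity from spatial mixing, and to extract mixing from the duality with the swapping coalescing walks together with the collision estimates developed later in the paper. First I would record that $\mu^\mathrm{stir}_\alpha$ is translation invariant: the initial law $\pi^\mathrm{site}_\alpha$ is translation invariant, the dynamics of $(\xi_t)$ on $\Z^d$ commute with the translations $\tau_x$, and weak limits of translation-invariant laws are translation invariant. Since the cylinder functions $H(\eta,C)=\prod_{z\in C}\eta(z)$, with $C\subset\Z^d$ finite, are closed under products ($H(\cdot,C)H(\cdot,C')=H(\cdot,C\cup C')$), separate points, and contain the constants, their linear span is dense in $C(\{0,1\}^{\Z^d})$. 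Hence it suffices to prove, for all finite $A,B$, the relation
\[
\mu^\mathrm{stir}_\alpha\big(\eta\equiv 1 \text{ on } A\cup\tau_xB\big)\xrightarrow{\,|x|\to\infty\,}\mu^\mathrm{stir}_\alpha\big(\eta\equiv 1 \text{ on } A\big)\,\mu^\mathrm{stir}_\alpha\big(\eta\equiv 1 \text{ on } B\big),
\]
as mixing of a dense family of continuous functions gives mixing, and mixing implies ergodicity.

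Next I would bring in duality. Writing $C^{(t)}$ for the swapping coalescing dual started from the finite set $C$ and $|C^{(t)}|$ for its number of particles, the duality for the stirring model yields $\mu^\mathrm{stir}_\alpha(\eta\equiv 1 \text{ on } C)=\lim_{t\to\infty}\E[\alpha^{|C^{(t)}|}]$. The idea is to compare the dual started from $A\cup\tau_xB$ with a pair of independent duals started from $A$ and from $\tau_xB$, coupled so as to coincide with it up to the first time a lineage issued from $A$ occupies the same site as a lineage issued from $\tau_xB$. On the event $E_x$ that this never happens the three systems agree for all time, so $|(A\cup\tau_xB)^{(t)}|=|A^{(t)}|+|(\tau_xB)^{(t)}|$ with the two summands independent; by translation invariance of the dual, $|(\tau_xB)^{(t)}|$ has the law of $|B^{(t)}|$. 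Bounding the discrepancy crudely off $E_x$ then gives
\[
\Big|\,\E\big[\alpha^{|(A\cup\tau_xB)^{(t)}|}\big]-\E\big[\alpha^{|A^{(t)}|}\big]\,\E\big[\alpha^{|B^{(t)}|}\big]\,\Big|\le \Prob(E_x^c).
\]

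It then remains to show $\Prob(E_x^c)\to 0$ as $|x|\to\infty$, uniformly in $t$; letting first $t\to\infty$ and then $|x|\to\infty$ delivers the displayed mixing relation. Since a cross-family coalescence can only be initiated by a collision of two individual lineages, a union bound over the finitely many pairs $(a,b)\in A\times\tau_xB$ dominates $\Prob(E_x^c)$ by a sum of probabilities that a single pair of swapping walks started at $a$ and at $b$ ever meet. As every such pair starts at distance of order $|x|$, this is precisely the quantity governed by the collision analysis of swapping random walks carried out in the stirring section, which I would invoke in the form: the probability that two swapping walks started far apart ever meet tends to $0$ with the separation.

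I expect the genuine difficulty to be concentrated in this last collision bound. The stirring couples the motions of distinct lineages through the shared edge (swap) clocks, so the two dual families are not independent and the pairwise meeting probabilities are not those of independent simple walks; controlling them uniformly in $t$, and in particular handling the low-dimensional cases where the underlying simple walk is recurrent, is exactly where the swapping-walk collision estimates of the paper are indispensable. Once those estimates are in hand the reduction above is routine, and the same scheme — computing two-region cylinder probabilities through the dual and factorizing them on a no-interaction event — is the mechanism I would use to conclude.
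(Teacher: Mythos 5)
Your overall strategy for $d \ge 3$ --- proving mixing via duality, coupling the dual started from $A \cup (B+x)$ with two independent duals started from $A$ and $B+x$, and reducing to a pairwise proximity estimate --- is the same as the paper's, but two of your steps do not go through as written. First, your coupling is declared to break at the first time a lineage from $A$ \emph{occupies the same site} as a lineage from $B+x$, and you claim the three systems agree for all time on the complement $E_x$ of that event. In this model that is false: both the coalescence mechanism (by~\eqref{rates_stirring_set_walker}, a particle at $u$ is removed at rate $\sum_{v \in S_t,\, v\sim u}1/d$) and the swap mechanism act between \emph{adjacent} particles, so the combined dual and the independent product already have different jump rates as soon as a lineage from $A$ and one from $B+x$ become $\ell^1$-neighbours, even if they never co-occupy a site. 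The paper accordingly breaks its coupling at $\sigma = \inf\{t : \mathrm{dist}(S^1_t,S^2_t)=1\}$ and bounds $\mathbf{P}(\sigma<\infty)$; with your event the factorization $|(A\cup\tau_xB)^{(t)}| = |A^{(t)}|+|(\tau_xB)^{(t)}|$ can fail on $E_x$. This is fixable by replacing ``same site'' with ``within distance $1$'', but it must be fixed.

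Second, you locate the main difficulty in a collision estimate for \emph{swapping} walks, on the grounds that shared edge clocks destroy independence of the cross-family pairs. The paper avoids this issue entirely: the duals $S^1$ (from $A$) and $S^2$ (from $B+x$) are constructed to be \emph{independent of each other}, and only the third system $S^3$ (from $A\cup(B+x)$) is coupled to their union up to time $\sigma$. Hence each cross-family pair consists of two genuinely independent rate-$(1+\mathsf{v})$ simple random walks, and the needed bound $\sum_{u\in A,\,v\in B+x}P_{u,v}(\exists t:\ |X^1_t-X^2_t|_1= 1)\to 0$ is exactly the classical transience estimate of Lemma~\ref{last_lemma} with $L=1$. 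No swapping-specific collision analysis is required, and the estimates of Section~\ref{sec_charac_stat_stir} (which control the coupling-failure probability $g_\mathrm{stir}$, not decay in the initial separation) would not supply the quantitative decay in $|x|_1$ that mixing demands. Finally, your remark that those estimates also handle ``the low-dimensional cases where the underlying simple walk is recurrent'' is mistaken: for $d\le 2$ recurrence forces $\Prob(E_x^c)=1$ and the mixing scheme collapses; the paper treats $d=1,2$ separately, where $\mu^\mathrm{stir}_\alpha$ is a mixture of the consensus measures and no coupling argument is used at all.
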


The second variant of the classical voter model investigated in this article arises by adding simple time dynamics to the ordinary percolation model. Dynamical percolation was introduced by Häggström, Peres and Steif in \cite{PeresSteif1997}. For a survey of the model see \cite{Steif2009}. Let us briefly describe the model; a more formal definition will be given in Section \ref{VMonDPSection}. Given the parameters $p \in [0,1]$ and $\mathsf{v} \in (0,+\infty)$, dynamical percolation on any graph $G=(V,E)$ is a Markov process on~$\{0,1\}^E$, denoted by $(\zeta_t)_{t \geq 0}$. Its stochastic dynamics is described as follows: every edge evolves independently as a two-state Markov chain, where an edge in state 0 (closed) flips to state 1 (open) at rate $p \cdot \mathsf{v}$ and an edge in state 1 flips to state 0 at rate $(1-p) \cdot \mathsf{v}$. The unique stationary distribution of this model is $\pi^\mathrm{edge}_p$, the Bernoulli($p$) product measure on $\{0,1\}^E$. 

Given $R \in \N$, we define the voter model with range $R$ on dynamical percolation on the Euclidean lattice, denoted by 
$$\{ M_t\}_{t \geq 0} = \{(\eta_t,\zeta_t)\}_{t \geq 0},$$
by running $(\zeta_t)$ as a dynamical percolation and providing each vertex $x$ of $\Z^d$ with an independent exponential clock of rate 1. Each time the clock of $x$ rings, it chooses~$y$ uniformly at random in~$B_1(x,R)$, the~$\ell^1$-metric ball of center~$x$ and radius~$R$, and \emph{attempts} to copy the opinion of~$y$. The attempt is only successful if there is a path of open edges in $\zeta_t$ inside~$B_1(x,R)$ connecting $x$ and $y$.
% For $p \in (0,1)$, $\{ M_t\}_{t \geq 0} = \{(\eta_t,\zeta_t)\}_{t \geq 0}$ and $(\zeta_t)_{t \geq 0}$ are Markov processes, while $(\eta_t)_{t \geq 0}$ is not. 
In Section \ref{VMonDPSection} we give a more formal definition of the model.

The following statement has two parts, the first of which was already proved in~\cite{Hutchcroft2022}.
\begin{theorem}\label{ThmCharacterization}
If the voter model on dynamical percolation on the Euclidean lattice with parameters~$\mathsf{v} \in (0,\infty)$,~$p \in [0,1]$ and $R \in \N$ is started from $\pi^\mathrm{site}_\alpha \otimes \pi^\mathrm{edge}_p$, $\alpha \in [0,1]$, then its distribution at time $t$ converges weakly, as $t \to \infty$, to a measure $\mu^{\mathrm{dyn}}_\alpha$.
%For $d \geq 1$, $\mathsf{v} \in (0,\infty]$ and $p \in [0,1]$, the distribution of $M_t$ starting at~$\nu_\alpha \otimes \pi_p$ converges weakly as $t \to \infty$ to a measure $\mu^{\mathsf{v},p}_\alpha$ in $\{0,1\}^{\Z \cup E(\Z^d)}$ for every $\alpha \in [0,1]$.
The family of (distinct) measures $\{\mu^{\mathrm{dyn}}_\alpha: \alpha \in [0,1]\}$ is stationary for $(M_t)$ and ergodic with respect to translations of~$\Z^d \times E(\Z^d)$. Moreover, 
\begin{enumerate}
    \item \cite{Hutchcroft2022} for $d =1$ or 2, the only extremal stationary measures for $(M_t)$ are $\mu^{\mathrm{dyn}}_0$ and $\mu^{\mathrm{dyn}}_1$: the product of the consensus measures and $\pi^\mathrm{edge}_p$; 
    \item for $d\geq 3$, the set of extremal stationary measures for $(M_t)$ is $\{\mu^{\mathrm{dyn}}_\alpha, \alpha \in [0,1]\}$. 
\end{enumerate}
\end{theorem}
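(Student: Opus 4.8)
The plan is to set up a duality between the voter model on dynamical percolation and a system of coalescing random walks that themselves move on the dynamical percolation environment, then feed this into the meta-theorem (statements (A) and (B) from the introduction). Concretely, I would first establish the graphical representation of $(M_t)$: attach to each vertex a rate-1 Poisson clock together with a uniform choice of $y \in B_1(x,R)$, and run the dynamical percolation $(\zeta_t)$ independently on the edges. Reading these arrows backward in time gives a dual process of coalescing random walks whose jumps are \emph{suppressed} whenever the required open path in $B_1(x,R)$ is absent at the relevant time. The key point is that since the percolation environment is stationary, reversible and positive-recurrent on each edge (it is itself a Markov chain with stationary law $\pi^\mathrm{edge}_p$), one can run the pair (walk, environment) as a joint Markov process and recover the duality identity $\E[\prod_i \eta_t(x_i)] = \E[\prod_j \eta_0(Z^j_t)]$ where the $Z^j$ are the dual walks, averaged over the environment. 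Establishing this duality carefully is the first main step.

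Next I would prove convergence from $\pi^\mathrm{site}_\alpha \otimes \pi^\mathrm{edge}_p$ and identify the limit $\mu^\mathrm{dyn}_\alpha$. Because the environment starts in its stationary law and the initial site-configuration is Bernoulli($\alpha$) and independent, the dual computation gives $\E[\prod_i \eta_t(x_i)] \to \E[\alpha^{N_\infty}]$, where $N_\infty$ is the (almost surely finite, by coalescence) number of distinct limiting walks; the environment marginal stays $\pi^\mathrm{edge}_p$ for all time by stationarity. This produces a well-defined limiting measure $\mu^\mathrm{dyn}_\alpha$, stationary for $(M_t)$, whose edge-marginal is $\pi^\mathrm{edge}_p$. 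Ergodicity with respect to translations of $\Z^d \times E(\Z^d)$ I would obtain either by a mixing/decoupling argument on the product $\sigma$-algebra or, following the analogous Proposition~\ref{prop_intro_ergo} for the stirring model, by exploiting spatial mixing of the Bernoulli initial data propagated through the dual walks.

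The crux is then to verify the two collision dichotomies for the dual walks, so that the meta-theorem applies verbatim. The heart of the matter is that a single dual walk on the dynamical percolation environment, after a time change that accounts for the fraction of time its intended jumps are unblocked, behaves like a genuine (range-$R$) random walk on $\Z^d$: since each edge is open a positive fraction $p$ of the time and the suppression events are local and have summable/controllable correlations, the walk is not trapped and its long-time behavior is comparable to ordinary SRW on $\Z^d$. I expect the main obstacle to be controlling collisions of \emph{two} dual walks in the \emph{correlated} environment — two nearby walks see the same edges, so their jump-suppressions are not independent, and one cannot naively treat their difference as a single Markov chain on the fixed lattice. The resolution I would pursue is to compare the two-walk-plus-environment system to the corresponding process on static $\Z^d$ via a coupling or via Green's-function / heat-kernel estimates: for $d=1,2$ one shows recurrence of the difference walk survives the time change (hence a.s.\ infinitely many meetings, giving part~1 through statement~(A)), while for $d \ge 3$ one shows the difference walk is transient and meets only finitely often, and that the tail $\sigma$-algebra of a single dual walk is trivial (giving part~2 through statement~(B)). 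The transience and tail-triviality for $d \ge 3$ should follow from the fact that the environment-averaged walk is quasi-isometric to SRW on $\Z^d$ and from the heat-kernel and corrector bounds available for such time-changed walks, much as the static-lattice facts cited in the introduction for the infinite percolation cluster.
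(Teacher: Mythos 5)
Your overall architecture (dual coalescing walks on the dynamic environment, then a meta-theorem-style characterization, with the main difficulty being collisions of walks correlated through the shared environment) matches the paper's, and you correctly identify the central obstacle. However, two steps as you propose them would fail. First, the duality you write, $\E[\prod_i \eta_t(x_i)] = \E[\prod_j \eta_0(Z^j_t)]$ ``averaged over the environment,'' is only adequate for proving convergence from $\pi^\mathrm{site}_\alpha \otimes \pi^\mathrm{edge}_p$; it cannot be used to characterize an \emph{arbitrary} stationary measure $\mu$ on $\Omega_\mathrm{site}\times\Omega_\mathrm{edge}$, whose conditional law of the sites given the edges is unknown. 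The paper's dual is the triple $(\mathcal C_t,\mathcal A_t,\mathcal B_t)$, where $\mathcal A_t,\mathcal B_t$ record the edges the walkers have revealed as open/closed, and the duality function carries the weights $p^{-|\mathcal A_t|}(1-p)^{-|\mathcal B_t|}$; this is what yields the identity $\bar\mu(C,E,F)=\mathbf E_{C,E,F}[\bar\mu(\mathcal C_t,\mathcal A_t,\mathcal B_t)]$ for conditional probabilities, on which every subsequent step rests. Without this bookkeeping of revealed edges there is no analogue of the martingale $\widehat\nu(\mathcal X_t)$ from the meta-theorem.

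Second, part (B) of the meta-theorem cannot be applied ``verbatim,'' because its Step 2 needs the joint tail $\sigma$-algebra of $k$ \emph{independent} walks to be trivial (via the Lindvall--Rogers lemma), whereas here the dual walks are dependent through the environment. Triviality of the tail of a single walk, which is what your quasi-isometry/heat-kernel heuristic would at best deliver, does not imply that the bounded martingale limit for two correlated walks is a.s.\ constant. The paper's workaround is a quantitative coupling (controlled by the function $g_{2R}(x,y)$) between two walks on a single environment and two walks on \emph{separate, independent} environments, combined with regeneration times at which the revealed edge sets are empty and the environment is re-randomized; at these times the pair of positions becomes a genuine random walk with i.i.d.\ increments having exponential moments, so transience and the needed summable collision estimates follow from the local CLT, and constancy of the martingale limit follows from the Hewitt--Savage $0$--$1$ law rather than tail triviality. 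This coupling-plus-regeneration mechanism is the missing ingredient in your outline; replacing it by ``corrector bounds for time-changed walks'' does not address the dependence between the two walkers, which is exactly where the difficulty lies.
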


\subsection{Related work}
We have already mentioned the references~\cite{Chen2010} and~\cite{Hutchcroft2022}; the former studies the voter model and coalescing random walks on (static) supercritical percolation clusters, and the latter studies those same processes in dynamic percolation. Let us now mention a few additional references.

In \cite{ShanChen12}, the authors assign independent and identically distributed weights larger than 1 to the edges of~$\Z^2$, and use these weights to bias the choice of neighbors for opinion updates. They prove that for almost all choices of environment, the only extremal stationary measures of the voter model are the consensus measures. 

Stationary measures of the contact process on random dynamic environments (dynamical percolation among them) have been studied in \cite{Seiler2021}. The author uses the reversibility of the environment to study the survival probability of the infection process.

\subsection{Organization}
In Section \ref{background}, we give a characterization result of the stationary measures of the voter model on general graphs: we prove \textbf{(A)} and \textbf{(B)}.

Section \ref{IVMSection} introduces the interchange voter model on the Euclidean lattice. It includes its duality properties and the characterization of its stationary measures. We prove Theorem~\ref{ThmCharacterizationIVM} in Section~\ref{sec_charac_stat_stir} and Proposition~\ref{prop_intro_ergo} in Section~\ref{sec_ergo_stat_stir}.

We begin the study of the voter model on dynamical percolation in Section \ref{VMonDPSection}. We study collisions of random walks on dynamical percolation in Section~\ref{RandomWalksonDPSection}. We define a dual process and define a family of stationary measures in Section~\ref{duality_stat_measures_vmodyn_section}. We proof Theorem~\ref{ThmCharacterization} in two sections. First, we give the characterization in Section~\ref{section_characterization_stat_measures} and show the ergodicity of the measures $\mu^\mathrm{dyn}_\alpha$ in Section~\ref{section_ergodicty_dyn_perc}.

\section{Background on the voter model}\label{background}
We write $\N = \{1,2,\dots\}$ and $\N_0 = \N \cup \{0\}$. Given a set $A$, we denote by $|A|$ its cardinality and by $\mathcal{P}_{\text{fin}}(A)$ the set of all finite subsets of~$A$.

\subsection{Voter model on graphs}\label{VoterModelonGSection}
In this section, we state the characterization of the stationary measures of the voter model on connected locally finite graphs. The following steps to prove this are classical, see Section~2 of \cite{Liggett2004}, but we include them here for two reasons. First, we explain them in higher generality than the usual setting of~$\mathbb Z^d$. Second, they lay the groundwork for extensions to yet more general settings.

Given a locally finite graph~$G = (V,E)$, we denote the space of site configurations by~$\Omega = \{0,1\}^V$. For any sites~$x,y \in V$, we write~$x \sim y$ when~$\{x,y\} \in E$. We denote by $\deg(x)$ the number of edges that are incident to the vertex~$x$. The identically 0 and identically~1 configurations, denoted~$\bar{0}$ and~$\bar{1}$ respectively, are said to be the consensus configurations, and~$\delta_{ \bar{0} }$ and~$\delta_{ \bar{1} }$ the consensus measures.

The voter model $(\eta_t)_{t \geq 0}$ on $G$ is a Feller process with Markov pre-generator
$$ \mathcal{L}f(\eta) = \sum_{ \substack{x,y \in V \\  x \sim y } } \frac{f(\eta^{y \to x})- f(\eta)}{\mathrm{deg}(x)}, $$
where $f:\Omega \to \R$ is any function that only depends on finitely many coordinates,~$\eta \in \Omega$ and 
 
\begin{equation}
    \label{eq_notation_conf}
\eta^{y \to x}(z) = \left\{ \begin{array}{ll}
    \eta(z) & \text{if } z \neq x  \\
    \eta(y) & \text{if } z = x.
\end{array} \right.    
\end{equation}

Given $\eta \in \Omega$, we denote by $P_{\eta}$ a probability measure under which $(\eta_t)_{t \geq 0}$ is defined and satisfies $P_{\eta}\big( \eta_0 = \eta \big) =1$. Likewise, given a probability distribution $\mu$ on $\Omega$, we write~$P_\mu = \int P_\eta \; \mu(\mathrm{d} \eta)$. 

%Given $A = \{x_1,\dots,x_k\} \subset V$, for every $i=1,\dots,k$ we define a continuous-time random walker $(Y^i_t)_{t \geq 0}$ with jump rate 1 starting at $x_i$  Whenever any two walkers meet at the same vertex, they coalesce and start moving together from that point onwards. The Markov chain $A_t = \{Y^i_t: i =1,\dots,k\}$, $t \geq 0$ is called a system of coalescing random walks on $G$. Also, we denote by $P_A$ or $P_{x_1,\dots,x_k}$ a probability measure under which $(A_t)_{t \geq 0}$ is defined and satisfies $P_A( A_0 = A ) = 1$. 
Given $k \in \N$, we will consider a system of coalescing random walks~$(Y^1_t, \ldots, Y^k_t)_{t \ge 0}$, defined as the continuous-time Markov chain on~$V^k$ with generator
\begin{equation}\label{Gen_crw}
    \mathcal{L}_\mathrm{crw}f(y_1,\dots,y_k) = \sum_{x \in V} \frac{1}{\deg(x)} \sum_{ y \sim x } [f(y^{x \to y}_1, \dots, y^{x \to y}_k) - f(y_1,\dots,y_k)],
\end{equation}
where for $x,y,z \in V$,
\begin{equation}\label{z_x_arrow_y}
     z^{x \to y} := \left\{ \begin{array}{cc}
        y & \text{if } z = x,  \\
        z & \text{otherwise.}
    \end{array} \right.    
\end{equation}
We write~$A_t := \{Y_t^1,\ldots, Y_t^k\}$ for $t \ge 0$. Given $A \in \mathcal{P}_\mathrm{fin}(V)$, we denote by~$P_A$ a probability measure under which $(A_t)_{t \ge 0}$ is defined and satisfies $P_A(A_0 = A) = 1$.

It is known that such a system of coalescing random walks is dual to the voter model, in the sense that
\begin{equation}\label{DualityMetaTheorem}
    P_\eta( \eta_t \equiv 1 \text{ on } A) = P_A( \eta \equiv 1 \text{ on } A_t)  \quad \text{for any } A \in \mathcal P_{\mathrm{fin}}(V).  
\end{equation}    
For~$\alpha \in [0,1]$, let~$\pi^\mathrm{site}_\alpha$ be the Bernoulli($\alpha$) product measure on~$V$. Using~\eqref{DualityMetaTheorem}, we obtain that
\begin{equation}\label{eq_dual_Tt}
P_{\pi^\mathrm{site}_\alpha}(\eta_t \equiv 1 \text{ on } A) = E_A[\alpha^{|A_t|}] \quad \text{for any } A \in \mathcal P_{\mathrm{fin}}(V).
\end{equation}

Since~$|A_t|$ is non-increasing in~$t$, we can define $|A_\infty| = \lim_{t \to \infty}|A_t|$. By taking the limit as $t \to \infty$ in~\eqref{eq_dual_Tt}, we see that $\eta_t$ under $P_{\pi^\mathrm{site}_\alpha}$ converges in distribution to a measure $\mu_\alpha$ on~$\{0,1\}^V$ satisfying 
\begin{equation}\label{MualphaMetaTheorem}
    \mu_\alpha( \eta: \eta \equiv 1 \text{ on } A ) = E_A \big[ \alpha^{|A_\infty|} \big] \quad \text{for any } A \in \mathcal P_{\mathrm{fin}}(V).
\end{equation}
As the measures $\mu_\alpha$ are obtained as distributional limits of $\eta_t$, they are also stationary with respect to the dynamics of the voter model \cite[Proposition~1.8.d]{Liggett2005}.

\subsection{Characterization of stationary measures}\label{MetaTheoremSection}
We state the characterization of the stationary measures of the voter model on a graph $G$ based on the collision properties of random walks on $G$. 

% Given $k \in \N$, we define a system of independent continuous-time random walks $(X^1_t,\dots,X^k_t)_{t \geq 0}$ on $G$ with jump rate 1 as the Markov chain on $V^k$ with generator
% \begin{equation}\label{generator_irw}
%     \mathcal{L}_\mathrm{irw}f(\textbf{x}) = \sum^k_{i = 1} \frac{1}{\deg(x_i)} \sum_{ \substack{ y \sim x_i } } \big[ f(\textbf{x}^{i,y}) - f(\textbf{x}) \big],
% \end{equation}
% where~$\textbf{x} = (x_1, \dots, x_k)$,~$\textbf{x}^{i,y} = (x^{i,y}_1, \dots, x^{i,y}_k)$ and $x_j^{i,y} = x_j$ if~$j \neq i$, and $x_i^{i,y} = y$. We write~$\X_t:=\{X^1_t,\dots,X^k_t\}$ for~$t \ge 0$. {\color{red} (Should we perhaps delete this paragraph? Is it really important to write down the generator for independent random walks? The reader knows what one random walk is, so it should be obvious to him that a system of independent random walks is just a bunch of independent copies of this object)}

The references \cite{PeresKrishnapur2004, BarlowPeresSousi2012} are pioneering studies of the so-called finite (and infinite) collision property of graphs, which are notions pertaining to the number of encounters between two independent simple random walks on the graph. These references consider discrete-time walks, whereas here we will need the corresponding notions for continuous-time walks:
\begin{definition}
Let~$G$ be a connected graph, and let~$(X^1_t)_{t \geq 0}$ and $(X^2_t)_{t \geq 0}$ be independent continuous-time simple random walks on~$G$. We say that~$G$ has the \textbf{continuous infinite collision property} if 
\begin{equation}\label{eq_infinite_collision}
    P(\forall s > 0 \; \exists t \ge s: X^1_t = X^2_t) = 1
\end{equation}
for any starting positions of the two walks.
We say that~$G$ has the \textbf{continuous finite collision property} if 
\begin{equation}\label{eq_finite_collision}
    P(\exists s >0:\; X^1_t \neq X^2_t \;\; \forall t \ge s) = 1
\end{equation}
for any starting positions of the two walks.
\end{definition}

% \begin{remark}\label{TailTrivialMC}
% The events inside the probabilities in~\eqref{eq_infinite_collision} and~\eqref{eq_finite_collision} are \textit{tail events}, that is, they are measurable with respect to the~$\sigma$-algebra~$\mathcal T:= \cap_{t \ge 0} \sigma((X^1_s,X^2_s): s\ge t)$. Orey's theorem~\cite{BlackwellFreedman64} guarantees that such events are trivial: their probability is either zero or one (to be precise, this theorem concerns discrete-time Markov chains, but the proof works equally well for continuous-time ones). Guided by these considerations, it is easy to conclude that the (continuous) infinite and finite collision properties are a dichotomy: one of them holds if and only if the other does not.
% \end{remark}

% Building upon this remark, we employ analogous reasoning to that found in \cite{BarlowPeresSousi2012} to obtain that for each $(x,y) \in G\times G$, where $G$ is a connected and recurrent graph,
% $$ P_{x,y}( Z = \infty ) \in \{0,1\}. $$
% Further, if there exists $x_0,y_0$ such that $P_{x_0,y_0}(Z = \infty) = 1$, then $P_{x,y}(Z = \infty) = 1$ for all $x,y \in G$.

In \cite{Chen2008}, it is proved that on quasi-transitive graphs with subexponential growth, the continuous finite collision property is equivalent to the (discrete) finite collision property.
%Due to the 0-1 law proved above, the infinite collision property is also independent of the time parameter on those graphs.

% {\color{red} At this point, the text is starting to feel chaotic. You haven't told the reader where you are headed. You gave several definitions, proved simple claims, referred to the literature, made remarks, but this was somewhat disorganized, with no sense of direction.}

Let~$(W_t)_{t \geq 0}$ be a continuous-time Markov chain on a countable state space~$S$. We say that this process has trivial tail~$\sigma$-algebra~$\mathcal{T} = \cap_{t \geq 0} \sigma( \{ W_s : s \geq t \} )$ if for each~$i~\in~S$,~$P_i(A)~\in~\{0,1\}$ for any $A \in \mathcal{T}$, where~$P_i$ is the measure under which~$W_0 = i$.

We let $\mathcal{I}$ and~$\mathcal{I}_e$ denote the sets of stationary measures and the set of extremal stationary measures of the voter model on~$G$, respectively. 

With all of this notation we rewrite \textbf{(A)} and \textbf{(B)} of the introduction:
\begin{theorem}\label{MetaTheorem}
Let $(\eta_t)_{t \geq 0}$ be a voter model on a locally finite and connected graph $G$.  
\begin{enumerate}
    \item[\emph{\textbf{(A)}}] If $G$ has the continuous infinite collision property, then $\mathcal{I}_e = \{ \delta_{\overline{0}}, \delta_{\overline{1}} \}$.
    \item[\emph{\textbf{(B)}}] If $G$ has the continuous finite collision property and the tail $\sigma$-algebra $\mathcal{T}$ of a simple random walk on $G$ is trivial, then $\mathcal{I}_e = \{ \mu_\alpha: \alpha \in [0,1] \}$.
\end{enumerate}
\end{theorem}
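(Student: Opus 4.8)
The plan is to derive the characterization of $\mathcal{I}_e$ from the duality relation \eqref{MualphaMetaTheorem} together with the collision properties of the coalescing random walk system. The central object is the map sending a stationary measure $\mu$ to the function $A \mapsto \mu(\eta \equiv 1 \text{ on } A)$ on $\mathcal{P}_{\mathrm{fin}}(V)$; stationarity of $\mu$ translates, via the duality \eqref{DualityMetaTheorem}, into the statement that $t \mapsto E_A[\mu(\eta \equiv 1 \text{ on } A_t)]$ is constant, i.e.\ that $H_\mu(A) := \mu(\eta \equiv 1 \text{ on } A)$ is a bounded harmonic function for the coalescing-walk semigroup. Because $|A_t|$ is non-increasing and integer-valued, it converges a.s.\ to $|A_\infty|$, and the martingale $H_\mu(A_t)$ converges a.s.\ and in $L^1$ to a limit determined by the configuration $A_\infty$. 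This reduces the classification of stationary measures to understanding the possible values of $A_\infty$, which is exactly what the two collision hypotheses control.

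For part \textbf{(A)}, under the continuous infinite collision property I would first reduce to the two-particle system: if any two independent walks meet almost surely, then two coalescing walks coalesce a.s., so starting from any two-point set $A=\{x,y\}$ we have $|A_\infty|=1$ a.s. By monotonicity and a union bound over the finitely many pairs in a finite set $A$, one upgrades this to $|A_\infty|=1$ a.s.\ for every finite $A$. Feeding $|A_\infty|\equiv 1$ into \eqref{MualphaMetaTheorem} shows $\mu_\alpha(\eta\equiv 1 \text{ on } A)=E_A[\alpha^{1}]=\alpha$ for all singletons and more generally a function of $A$ that depends on $\mu$ only through a single parameter; the harmonic-function argument then forces any extremal stationary $\mu$ to satisfy $H_\mu(A)\in\{0,1\}$ determined by the coalesced location. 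Concretely, the bounded harmonic function $x \mapsto \mu(\eta(x)=1)$ must be constant (by coalescence, hence by the tail triviality that coalescence of two walks implies for the single-walk invariant events, or directly since $H_\mu$ is harmonic and the walk is recurrent on the relevant scale), and an extremal measure must concentrate on $\{\bar 0,\bar 1\}$, giving $\mathcal{I}_e=\{\delta_{\bar 0},\delta_{\bar 1}\}$.

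For part \textbf{(B)}, the continuous finite collision property gives $P_A(|A_\infty|=|A|)>0$ and more precisely that with positive probability the walks never coalesce, so $|A_\infty|$ can equal $|A|$; combined with triviality of the tail $\sigma$-algebra of the single walk, one shows that the limiting indicator events $\{\eta \equiv 1 \text{ on } A_\infty\}$ generate, for extremal $\mu$, exactly the degrees of freedom of a product-type measure. The strategy here is to show that $\{\mu_\alpha\}$ exhausts $\mathcal{I}_e$ by proving (i) each $\mu_\alpha$ is extremal, and (ii) every extremal $\mu$ equals some $\mu_\alpha$. For (ii) I would use the martingale limit: $H_\mu(A_t)\to H_\mu(A_\infty)$ and, because distinct walks survive with positive probability and the single-walk tail is trivial, the surviving coordinates become asymptotically independent and each is governed by a single number $\alpha=\mu(\eta(x)=1)$ (well-defined and independent of $x$ by harmonicity plus tail triviality). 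This identifies $H_\mu(A)=E_A[\alpha^{|A_\infty|}]=\mu_\alpha(\eta\equiv 1\text{ on }A)$, whence $\mu=\mu_\alpha$.

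The main obstacle I expect is step (ii) of part \textbf{(B)}: showing that on the surviving (non-coalesced) walks the limiting voter states decorrelate into an i.i.d.\ pattern with common parameter $\alpha$. This is where triviality of the tail $\sigma$-algebra is essential and where the argument is most delicate, since one must control the joint law of $\big(\eta(Y^1_\infty),\dots,\eta(Y^k_\infty)\big)$ as the walks separate to infinity and argue that correlations vanish. I would handle this by expressing correlations via the coalescence structure: the probability that two tagged walks have coalesced by time $t$ versus remain distinct, using the finite collision property to guarantee a positive surviving fraction and the tail triviality to guarantee that conditioning on the far-future location of one walk does not bias the voter value at another, so that in the $t\to\infty$ limit the joint distribution factorizes. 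Everything else—the martingale convergence, the reduction to the two-particle problem, and verifying stationarity and distinctness of the $\mu_\alpha$—is routine given \eqref{MualphaMetaTheorem}.
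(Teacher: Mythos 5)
Your overall skeleton (duality, harmonicity of $A \mapsto \widehat{\mu}(A):=\mu(\eta\equiv 1\text{ on }A)$ for the coalescing semigroup, martingale convergence, tail triviality, a de Finetti-type factorization) matches the paper, and your part \textbf{(A)} is essentially the paper's argument: two coalescing walks meet a.s., and this forces $\mu(\eta(x)\neq\eta(y))=0$ for every stationary $\mu$, hence concentration on the consensus configurations. (One caveat: constancy of $x\mapsto\mu(\eta(x)=1)$ alone does not give this --- $\pi^\mathrm{site}_\alpha$ has constant one-point marginals --- you need the two-point bound $\mu(\eta(x)\neq\eta(y))\le P_{\{x,y\}}(Y^1_t\neq Y^2_t)$, or equivalently the identity $\widehat{\mu}(\{x,y\})=E[h(Y^1_\tau)]=h(x)$ for the harmonic function $h(z)=\mu(\eta(z)=1)$ evaluated at the coalescence time $\tau$.)

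The genuine gap is in part \textbf{(B)}, exactly at the step you flag as delicate. You propose to apply martingale convergence to $\widehat{\mu}(A_t)$ along the \emph{coalescing} system and then invoke tail triviality to make the limit factorize. But the hypothesis is triviality of the tail $\sigma$-algebra of a \emph{single} simple random walk; to upgrade this to triviality of the joint tail of $k$ walkers one needs the walkers to be \emph{independent} (the paper combines independence with Lemma~2 of Lindvall--Rogers to identify $\bigcap_t\sigma(\mathcal T^1_t,\dots,\mathcal T^k_t)$ with $\sigma(\mathcal T^1,\dots,\mathcal T^k)$), and the coalescing walkers are not independent, so your assertion that ``the surviving coordinates become asymptotically independent'' has no proof mechanism behind it. The paper's proof supplies exactly the missing device: it couples $(A_t)$ with a system $\mathcal X_t$ of genuinely independent walks and quantifies the discrepancy by $g(A)=P_A(\exists t:|A_t|<|A_0|)$; the continuous finite collision property is used precisely to show $E_A[g(A_t)],E_A[g(\mathcal X_t)]\to0$ (not merely that ``a positive fraction survives''), which (i) lets one define an auxiliary measure $\nu$ with $\widehat{\nu}(A)=\lim_t E_A[\widehat{\mu}(\mathcal X_t)]$, (ii) legitimizes the tail argument for the bounded martingale $\widehat{\nu}(\mathcal X_t)$, yielding that $\widehat{\nu}(A)$ depends only on $|A|$ and hence $\nu=\int\pi_\alpha\,\gamma(\mathrm d\alpha)$ by de Finetti, and (iii) transfers this back to $\mu=\int\mu_\alpha\,\gamma(\mathrm d\alpha)$ via the bound $|\widehat{\mu}(A)-E_A[\widehat{\nu}(A_s)]|\le E_A[g(A_s)]$. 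Finally, you dismiss extremality of each $\mu_\alpha$ as routine, but it is not the same as distinctness of the family: one must show $\gamma\mapsto\int\mu_\alpha\,\gamma(\mathrm d\alpha)$ is injective, which the paper does by tracking $\gamma$ through the construction of $\nu$ and invoking uniqueness in de Finetti's theorem.
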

We note the following points:
\begin{enumerate}
    \item If $G$ is transient and has bounded degrees, then $G$ has the continuous finite collision property. Indeed, under these assumptions, in \cite{PeresKrishnapur2004}, it is proved that the expected number of meetings between two independent walkers is finite. Following the same method we can prove the same for continuous-time random walks.
    \item If a continuous-time Markov chain is recurrent, then its tail~$\sigma$-algebra is trivial. This is the content of Orey's theorem \cite{BlackwellFreedman64}; note that this theorem is originally stated for discrete-time chains, but the proof works equally for continuous-time chains. Then, if~$G$ is recurrent, the tail~$\sigma$-algebra of a simple random walk on $G$ is trivial.
    \item Krishnapur and Peres presented in \cite{PeresKrishnapur2004} an example of a recurrent graph with the finite collision property: the comb lattice Comb($\mathbb Z$), obtained by starting with~$\mathbb Z^2$ and deleting all horizontal edges except for those with vertical coordinate zero. With minor adaptations, their proof also shows that Comb($\Z$) has the continuous finite collision property. Hence, the set of extremal measures of the voter model on~Comb($\Z$) is~$\{ \mu_\alpha: \alpha \in [0,1] \}$. 
    \item The assumption that the tail $\sigma$-algebra $\mathcal{T}$ of a random walk is trivial is necessary in~\textbf{(B)}. To see that the statement would be false without this assumption, we show that for the voter model on the 3-regular tree, we have
\begin{equation}\label{Contr0}
    \mathcal{I}_e \supsetneq \{ \mu_\alpha: \alpha \in [0,1] \}.
    \end{equation}
We do this by exhibiting a measure~$\mu \in \mathcal I$ such that
    \begin{equation}\label{Contr1}
        \mu \notin \overline{\text{conv}}(\{ \mu_\alpha: \alpha \in [0,1] \}),
    \end{equation}
    where~$\overline{\text{conv}}$ denotes the closed convex hull of a set of measures. Noting that Theorem~1.8.c of \cite{Liggett2005} states that~$\overline{\text{conv}}(\mathcal{I}_e) = \mathcal{I}$,
    we see that~\eqref{Contr1} gives~\eqref{Contr0}.   
    
    In order to construct~$\mu$, fix an arbitrary vertex of the tree and let~$L$ be one of the tree branches emanating from it. %Given a random walk $(X_t)_{t \geq 0}$, let
    %$$ \mathscr{L}(X) := \{ \exists t \geq 0: X_s \in L \text{ for all } s \geq t\}.$$
%    This event belongs to $\mathcal{T}$ but $P_x(\mathscr{L}(X)) \notin \{0,1\}$ for every $x$. In fact, $P_{x_0}(\mathscr{L}(X))=1/3$. 
%    Let $\overline{\mathrm{conv}}(B)$ denote the closed convex hull of the set $B$. We prove that there is a stationary measure for the voter model on the 3-regular tree which is not in~$\overline{\mathrm{conv}}(\{\mu_\alpha:~\alpha~\in~[0,1]\}$).
    Let $\bar \eta \in \{0,1\}^V$ be given by $\bar \eta(x)=1$ if and only if $x \in L$. For every $A \in \mathcal{P}_\mathrm{fin}(V)$, by duality we have
    \begin{equation*}
P_{\bar \eta}(\eta_t \equiv 1 \text{ on } A) = P_A(A_t \subset L) = P_A( Y^1_t \in L, \dots, Y^{|A|}_t \in L),    
    \end{equation*}
    where~$(Y^1_t,\ldots,Y^{|A|}_t)$ is a system of coalescing random walks. As~$t \to \infty$, the indicator of the event~$\{Y^1_t,\ldots, Y^{|A|}_t \in L\}$ converges almost surely to the indicator of the event~$\{\exists s_0: Y^1_s,\ldots, Y^{|A|}_s \in L \;\forall s \ge s_0 \}$. Hence, 
    \[\lim_{t \to \infty} P_{\bar \eta}(\eta_t \equiv 1 \text{ on } A) = P_A(\exists s_0: Y^1_s,\ldots, Y^{|A|}_s \in L \;\forall s \ge s_0).\]
    This shows that~$\eta_t$ converges weakly, as~$t \to \infty$, to a distribution~$\mu$ satisfying
    \[\mu(\eta: \eta \equiv 1 \text{ on } A) = P_A(\exists s_0: Y^1_s,\ldots, Y^{|A|}_s \in L \;\forall s \ge s_0).\]
Since $\mu$ is obtained as a weak limit of the law of the process as time is taken to infinity, it is stationary. To check~\eqref{Contr1}, it is easy to check that any measure~$\nu \in \overline{\text{conv}}(\{ \mu_\alpha: \alpha \in [0,1] \}$ is invariant under tree isomorphisms, and on the other hand, for any vertex~$x$ of the tree,~$\mu(\{\eta: \eta(x) = 1\})$ is equal to the probability that a random walk started from~$x$ eventually never leaves~$L$; this probability clearly depends on~$x$.
 
\end{enumerate}
\begin{proof}[Proof of Theorem \ref{MetaTheorem}:]
We compare a system of coalescing random walks with a system of independent random walks. Given $k \in \N$, let~$(X^i_t)_{t \ge 0}$, $i = 1,\dots,k$, be independent continuous-time random walks on $G$ which jump away from any site with rate 1 and choose jump destinations uniformly at random among neighbours of the starting point of the jump. We write 
\begin{equation}\label{definition_cal_X}
    \X_t := \{X^1_t, \dots, X^k_t\}, \; t \ge0.    
\end{equation}
Given that $X^i_0 = Y^i_0$ for each $i$, there is a natural coupling of~$(Y^1_t,\dots,Y^k_t)$ and~$(X^1_t,\dots,X^k_t)$ so that $A_t \subseteq \X_t$ for all~$t \geq 0$. This can be done by using the same transition times for the two processes, but at each coalescence, retaining only one of the coalescing particles in~$A_t$. Using this coupling, it follows that for any $f:\mathcal{P}_\mathrm{fin}(V) \to \R$  with~$|f| \le 1$, 
\begin{equation}\label{CouplingEquation}
    \big|E_A [ f(A_t)] - E_A [f(\X_t) ] \big| \leq P_A( \exists t \geq 0: |A_t| < |A_0| ) =: g(A).
\end{equation} 

1. Assume $G$ has the continuous infinite collision property, then 
\begin{equation}\label{prob1_collision}
    P_{x,y}(\exists t\geq0:X^1_t=X^2_t)=1 \text{ for every } x,y \in V. 
\end{equation}
Fix $\mu \in \mathcal{I}$. For any $x,y \in V$, we have
\begin{equation}\label{Dim1MetaTheorem}
    \begin{aligned}
    \mu( \eta: \eta(x) \neq \eta(y) ) & =  \int P_\eta( \eta_t(x) \neq \eta_t(y) ) \; \mu(\mathrm{d}\eta) \\[.2cm]
    & \hspace{-1.5mm} \stackrel{(\ref{DualityMetaTheorem})}{=} \int P_{\{x,y\}}( \eta(Y^1_t) \neq \eta_t(Y^2_t) ) \; \mu(\mathrm{d}\eta) \\[.2cm]
    & \leq P_{\{x,y\}}( Y^1_t \neq Y^2_t \hspace{1mm} ) \stackrel{(*)}{=} P_{x,y}(X^1_s \neq X^2_s \; \forall s \in [0,t]  ),
    \end{aligned}
\end{equation}
where $(*)$ follows from the coupling of the coalescing and independent random walks. By taking the limit as $t \to \infty$ in (\ref{Dim1MetaTheorem}) and using (\ref{prob1_collision}) we see that $\mu$ is supported on the consensus configurations, so $\mu = \alpha \delta_{\overline{1}} + (1-\alpha)\delta_{\overline{0}}$ for some $\alpha \in [0,1]$. This proves that~$\mathcal{I}_e =~\{\delta_{\overline{0}},\delta_{\overline{1}}\}$. 

2. Assume $G$ has the continuous finite collision property. It is easy to see that
\begin{equation}\label{eq_FunctionGMetaTheorem}
    \lim_{t \to \infty}E_A [g(\X_t)] = \lim_{t \to \infty} E_A[g(A_t)] = 0 \quad \text{ for any } A \in \mathcal{P}_\mathrm{fin}(V).
\end{equation}

Given a measure~$\kappa$ on~$\{0,1\}^V$, we will write
\[\widehat{\kappa}(A):= \kappa(\{\eta: \eta \equiv 1 \text{ on } A\}),\quad A \in \mathcal P_{\mathrm{fin}}(V).\]

Fix~$\mu \in \mathcal I$ for the rest of the proof. We divide the proof into steps.\\[-.3cm]

\noindent \textbf{Step 1: Definition of the measure~$\nu$.} For any~$A \in \mathcal P_{\mathrm{fin}}(V)$, by the Markov property we have
\[
E_A [\widehat{\mu}(\mathcal X_{s+t})] = E_A [ E_{\mathcal X_s} [\widehat{\mu}(\mathcal X_t)]].
\]
Moreover, by stationarity of~$\mu$ and duality,
\[
E_A[\widehat{\mu}(\mathcal X_s)] = E_A[E_{\mathcal X_s}[\widehat{\mu}(A_t)]].
\]
Together with~\eqref{CouplingEquation} and~\eqref{eq_FunctionGMetaTheorem}, this gives
\begin{equation}\label{eq_first_twostep}
|E_A [\widehat{\mu}(\mathcal X_{s+t})] - E_A[\widehat{\mu}(\mathcal X_s)]| \le E_A[g(\mathcal X_s)] \xrightarrow{s \to \infty} 0.
\end{equation}
This readily implies that
\begin{equation}\label{eq_limit_exists} E_A[\widehat{\mu}(\mathcal X_t)] \text{ converges as~$t \to \infty$}.\end{equation}
Using the inclusion-exclusion formula, it is easy to see that for every~$t$ there exists a unique probability~$\nu_t$ on~$\{0,1\}^V$ such that~$\widehat{\nu_t}(A)=E_A[\widehat{\mu}(\mathcal X_t)]$. Then,~\eqref{eq_limit_exists} implies that~$\nu_t$ converges weakly, as~$t \to \infty$, to a measure~$\nu$, which by compactness of~$\{0,1\}^V$ is also a probability, and satisfies
\begin{equation}
    \label{eq_nu_satisfies}
    \widehat{\nu}(A) = \lim_{t \to \infty} E_A[\widehat{\mu}(\mathcal X_t)],\quad A \in \mathcal P_{\mathrm{fin}}(V).
\end{equation}

We will need a bound involving~$\mu$ and~$\nu$. Similarly to what we did in~\eqref{eq_first_twostep}, for any~$A$ and any~$s,t$ we have
\begin{equation*}
    |\widehat{\mu}(A) - E_A[E_{A_s}[\widehat{\mu}(\mathcal X_t)]]| = |E_A[E_{A_s}[\widehat{\mu}(A_t)]]- E_A[E_{A_s}[\widehat{\mu}(\mathcal X_t)]]| \le E_A[g(A_s)].
    \end{equation*}
By taking~$t \to \infty$ in the above and using the bounded convergence theorem with the convergence~$E_{A_s}[\widehat{\mu}(\mathcal X_t)] \xrightarrow{t \to \infty} \widehat{\nu}(A_s)$, we obtain
\begin{equation}\label{eq_before_finetti}
    |\widehat{\mu}(A) - E_A[\widehat{\nu}(A_s)]| \le E_A[g(A_s)].
\end{equation}

\noindent \textbf{Step 2:~$\nu$ is a mixture of Bernoulli product measures.} It follows from~\eqref{eq_nu_satisfies} and the Markov property that, for any~$A \in \mathcal P_{\mathrm{fin}}(V)$, under~$P_A$, the process~$\widehat{\nu}(\mathcal X_t)$,~$t \ge 0$, is a bounded martingale. By the martingale convergence theorem, there exists a random variable $Z_A$ such that~$P_A$-almost surely,~$\widehat{\nu}(\mathcal X_t)$ converges to~$Z_A$.

We now want to use the assumption of triviality of the tail~$\sigma$-algebra of the random walks to obtain that~$Z_A$ is almost surely constant, but we need to be a bit careful, because we are dealing with multiple random walks ($|A|$ many of them), not just one. Let
$$ \mathcal{T}^i = \bigcap_{t \geq 0} \sigma(X^i_s: s \geq t), \quad i =1,\dots,k.$$
Under the assumption that each $\mathcal{T}^i$ is trivial, we obtain that $\sigma( \mathcal{T}^1, \dots, \mathcal{T}^k )$ is also trivial. Letting~$\mathcal G:= \bigcap_{t \geq 0} \sigma( \mathcal{T}^1_t, \dots, \mathcal{T}^k_t )$, Lemma 2 of~\cite{LindvallRogers1986} implies that~$\mathcal G$ is the same as~$\sigma( \mathcal{T}^1, \dots, \mathcal{T}^k )$, and then this~$\sigma$-algebra is trivial. Note that the result of~\cite{LindvallRogers1986} is given for two processes, but the results for $k$ processes readily follows. Since~$Z$ is~$\mathcal{G}$-measurable, it is constant~$P_A$-almost surely, and, since martingales have constant expectation,
\begin{equation}\label{eq_q_limit_as_k}
    E_A[Z_A] = Z_A = \widehat{\nu}(A) \quad P_A\text{-almost surely}.
\end{equation}

Now let $A,B \in \mathcal P_\mathrm{fin}(V)$ with $|A|=|B|$. Let $\tau_B = \inf\{ t \ge 0: \X_t = B \}$. Using~\eqref{eq_q_limit_as_k} and the Strong Markov property, we readily obtain
 \begin{align*}
    \widehat{\nu}(A) \cdot P_A(\tau_B< \infty)  = \widehat{\nu}(B) \cdot P_A( \tau_B < \infty).
\end{align*}
By irreducibility of the random walks, we have that $P_A(\tau_B< \infty) >0$, so we conclude that~$\widehat{\nu}(A) = \widehat{\nu}(B)$. 

This proves that~$\widehat{\nu}(A)$ only depends on~$A$ through~$|A|$, which (again using inclusion-exclusion) readily implies that~$\nu$ is exchangeable. It then follows from de Finetti's theorem that there exists a (uniquely determined) probability~$\gamma$ on the Borel sets of~$[0,1]$ such that
\begin{equation}
    \label{eq_after_finetti}
    \nu = \int_0^1 \pi_\alpha \; \gamma(\mathrm{d}\alpha).
\end{equation}

\noindent \textbf{Step 3:~$\mu$ is a mixture of the measures~$\mu_\alpha$.} 
By putting together~\eqref{eq_before_finetti} and~\eqref{eq_after_finetti}, and also using Fubini's theorem we have, for any~$A$ and~$s$:
\begin{equation}\label{eq_consequence_finetti}    \left|\widehat{\mu}(A) - \int_0^1 E_A[\widehat{\pi_\alpha}(A_s)] \; \gamma(\mathrm d \alpha)\right| \le E_A[g(A_s)].\end{equation}

Recall from~\eqref{eq_dual_Tt} that the measures~$\mu_\alpha$ satisfy
\[\widehat{\mu_\alpha}(A) = \lim_{s \to \infty} E_A[\widehat{\pi_\alpha}(A_s)]. \]
This gives
\[
\int_0^1 \widehat{\mu_\alpha}(A)\;\gamma(\mathrm d \alpha) = \lim_{s \to \infty} \int_0^1 E_A[\widehat{\pi_\alpha}(A_s)]\;\gamma(\mathrm d \alpha).
\]
Hence, we can take~$s \to \infty$ on both sides of the inequality~\eqref{eq_consequence_finetti}, also using~\eqref{eq_FunctionGMetaTheorem}, to obtain that
\begin{equation}\label{eq_decompose_alpha}
\mu = \int_0^1 \mu_\alpha\;\gamma(\mathrm d \alpha).
\end{equation}

\noindent \textbf{Step 4: Conclusion.} It follows readily from~\eqref{eq_decompose_alpha} that~$\mathcal{I} \subseteq \overline{\mathrm{conv}}( \{ \mu_\alpha: \alpha \in [0,1] \} )$. The reverse inclusion holds because~$\mathcal I$ is closed and convex. Recall that the Krein-Milman theorem asserts that if $\mathcal{B} \subseteq \mathcal{I}$ and $\mathcal{I} = \overline{\mathrm{conv}} (\mathcal{B})$, then $\mathcal{I}_e \subseteq \mathrm{cl}(\mathcal{B})$, where~$\mathrm{cl}$ denotes the closure of a set of measures. Therefore, 
$$ \mathcal{I}_e \subseteq \mathrm{cl}( \{\mu_\alpha: \alpha \in [0,1]\} ) = \{\mu_\alpha: \alpha \in [0,1]\},$$
where the equality holds because the family $\{\mu_\alpha: \alpha \in [0,1]\}$ is closed. 

Finally, it remains to prove that $\mu_\alpha$ is extremal for every~$\alpha$ (that is, it cannot be written as~$\mu_\alpha = \int_0^1 \mu_\beta\; \gamma(\mathrm d \beta)$  for some~$\gamma \neq \delta_{\alpha}$). 

To prove this, fix two distinct measures~$\gamma,\gamma'$ on~$[0,1]$ and let~$\mu = \int \mu_\alpha\gamma(\mathrm{d}\alpha)$ and~$\mu' = \int \mu_\alpha \gamma'(\mathrm{d}\alpha)$. Take the measures~$\nu$ and~$\nu'$ corresponding to~$\mu$ and~$\mu'$, respectively, as in Step 1 of the proof. By inspecting the proof, it is easy to check that~$\nu=\int \pi_\alpha \gamma(\mathrm{d}\alpha)$ and~$\nu'=\int \pi_\alpha \gamma'(\mathrm{d}\alpha)$, and from this, using  for instance the uniqueness in de Finetti's theorem, we conclude that~$\nu \neq \nu'$, so~$\mu \neq \mu'$.
\end{proof}

\section{The voter model with stirring}\label{IVMSection}
This section will use the graph notation introduced at the beginning of Section~\ref{background}. Throughout this section, we fix an integer $d\ge 2$ and let $G = (V,E)$ be a $d$-regular graph.

\subsection{Duality and stationary measures}
Given~$\mathsf{v} \in [0,\infty)$, the voter model with stirring on~$G=(V,E)$ with parameter~$\mathsf{v}$, denoted by~$\xi_t$,~$t \geq 0$, is the Feller process with state space $\Omega = \{0,1\}^V$ and pre-generator
$$\mathcal{L}_\mathrm{stir} f(\xi) = \mathsf{v} \cdot \sum_{ \{x,y\}\in E } \frac{f(\xi^{x,y}) - f(\xi)}{d} + \sum_{ \substack{x,y \in V: \\ x \sim y} }   \frac{f(\xi^{y \to x}) - f(\xi)}{d},$$
where $f:\Omega \to \R$ is any function that only depends on finitely many coordinates,~$\xi \in \Omega$, 
$$\xi^{x,y}(z) = \left\{ \begin{array}{ll}
    \xi(z) & \text{if } z \neq x,y,  \\
    \xi(x) & \text{if } z = y, \\
    \xi(y) & \text{if } z = x, \end{array} \right.$$
    and~$\xi^{y \to x}$ is as in~\eqref{eq_notation_conf}.
The first part of the pre-generator interchanges the voter opinions at rate~$\mathsf{v}/d$ and the second part is the usual voter model. Given~$\xi \in \Omega$, we denote by~$P_\xi$ a probability measure under which~$(\xi_t)_{t \ge 0}$ is defined and satisfies~$P_\xi(\xi_0 = \xi)=1$. 

Now, we construct a dual Markov chain for the voter model with stirring. To do so, we introduce the following families of Poisson point processes on $[0,\infty)$:
$$\{ \mathcal T^{(x,y)}: x \sim y \} \text{ each at rate } 1/d \quad \text{and} \quad \{ \mathcal H^{\{x,y\}}: x \sim y \} \text{ each at rate } \mathsf{v}/d.$$
We denote by $\Prob$ a probability measure under which all these processes are defined and are independent. For each $x \in V$, we then define the \textbf{coalescing-stirring random walks}~$(Y^x_t)_{t \geq 0}$ as follows: set $Y^x_0 = x$ and assume that $Y^x_{t^-}$ has been defined,
\begin{itemize}
    \item if $t \in  \mathcal T^{ (Y^x_{t^-},z) }$ for some $z \sim Y^x_{t^-}$ or $t \in \mathcal H^{ \{ Y^x_{t^-},z \} } $ for some $z \sim Y^x_{t^-}$, then $Y^x_t = z$;
    \item if $t \notin \big (\cup_{z \sim Y^x_{t^-}} \mathcal T^{ (Y^x_{t^-},z) } \big) \cup \big( \cup_{z \sim Y^x_{t^-}} \mathcal H^{ (Y_{t^-},z) } \big)$, then $Y^x_t = Y^x_{t^-}$.
\end{itemize}
From the definition above, it is clear that each $(Y^x_t)_{t \geq 0}$ is a continuous-time random walk on~$V$ which jumps away from any site with rate $1+\mathsf{v}$ and chooses jump destinations uniformly at random among neighbors of the starting point of the jump. When~$Y^x_t$ and~$Y^y_t$ are nearest neighbors they can swap their positions or meet (in the latter case they coalesce). 

We write $S_t(A) := \{Y^x_t: x \in A\}$ for every $t \geq 0$. It is easy to see that~$S_t(A)$ is a Markov chain on~$\mathcal{P}_\mathrm{fin}(\Z^d)$ that follows the rates:
\begin{equation}\label{rates_stirring_set_walker}
    \begin{array}{lll}
        \vspace{1mm} q(A, (A \backslash \{x\})\cup \{y\} ) & = \tfrac{1}{d}(\mathsf{v}+1), & x \in A, y \notin A, x \sim y \\
        q(A,A \setminus \{x\}) & = \displaystyle \sum_{ \substack{y \in A: y \sim x} } 1/d, & x \in A.
    \end{array}
\end{equation}
 Given~$A \in \mathcal{P}_\mathrm{fin}(V)$, we denote by $P_A$ a probability measure under which $(S_t)_{t \ge 0} = (S_t(A))_{t \ge 0}$ is defined and satisfies $P_A(S_0 = A) = 1$.

For a configuration~$\xi \in \Omega$ and~$A \in \mathcal{P}_\mathrm{fin}(V)$, we write~$\phi(\xi,A) = \prod_{x \in A}\xi(x)$. We also write~$\phi_\xi(A) = \phi_A(\xi) = \phi(\xi,A)$. Letting~$\mathcal{L}^d_\mathrm{stir}$ be the generator corresponding to the chain as in~\eqref{rates_stirring_set_walker}, it is easy to see that
$$ \mathcal{L}_\mathrm{stir}\phi_A(\xi) = \mathcal{L}^d_\mathrm{stir}\phi_\xi(A).$$
Then we have the dual relationship
\begin{equation}\label{duality_equa_stir}
    P_\xi( \xi_t \equiv 1 \text{ on } A ) = P_A( \xi \equiv 1 \text{ on } S_t ).
\end{equation}
Proving analogously as in Section \ref{MetaTheoremSection}, we have a family of distinct stationary measures~$\mu^\mathrm{stir}_\alpha$,~$\alpha \in [0,1]$, satisfying 
$$\mu^\mathrm{stir}_\alpha( \xi: \xi \equiv 1 \text{ on } A ) = E_A \big[ \alpha^{|S_\infty|} \big] \hspace{1mm} \text{ for all } A \in \mathcal{P}_\mathrm{fin}(V). $$

\subsection{Characterization of the stationary measures}\label{sec_charac_stat_stir}
To obtain a characterization of the stationary measures of the voter model with stirring, we compare a system of coalescing-stirring random walks with independent random walks.\\[-.3cm]

\noindent \textbf{Coupling of Markov chains.} Let~$S$ be a countable set and $r_1$ and $r_2$ be bounded jump rates for two continuous-time Markov chains on~$S$. Under a probability measure $\widehat{\Prob}$, we construct a Markov chain $(W_t,Z_t,I_t)_{t \ge 0}$ on the state space
$$ \{ (a,a,0): a \in S \} \cup \{ (a,b,1): a,b \in S \},$$
starting from $(c,c,0)$. The jump rates of this coupled chain are given as follows. From a point of the
form $(a,b,1)$ the chain jumps to $(a',b,1)$ with rate $r_1(a,a')$, and to $(a,b',1)$ with rate $r_2(b,b')$. From a point of the form $(a, a, 0)$, the rate depends on the destination, as follows:
\begin{itemize}
    \item if $r_1(a, a') = r_2(a, a')$, then the chain jumps to $(a', a', 0)$ with rate $r_1(a, a')$;
    \item if $r_1(a, a') \neq r_2(a, a')$, then the chain jumps to $(a', a, 1)$ with rate $r_1(a, a')$ and to $(a, a', 1)$ with rate $r_2(a, a')$.
\end{itemize}
It is easy to see that the marginal rates for $W_t$ and $Z_t$ are correct, they are continuous-time random walks with jumping rates $r_1$ and $r_2$, respectively. Let 
$$ \tau_\mathrm{coup} := \inf\{ t \ge 0: I_t = 1 \}, $$
the time when the coupling breaks. The process
$$ M_t = \mathds{1}\{\tau_\mathrm{coup} \leq t \} - \int^t_0 \sum_{b \hspace{0.5mm} : \hspace{0.5mm} r_1(W_s,b) \neq r_2(W_s,b)} \big( r_1(W_s,b) + r_2(W_s,b) \big) \cdot \mathds{1}\{ s < \tau_\mathrm{coup} \} \; \mathrm{d}s, \hspace{1mm} t\ge 0 $$
can be seen to be a martingale. Using the martingale property and taking the limit as~$t \to \infty$, we have that
\begin{equation}\label{coup_upper_bound}
    \widehat{\Prob}( \tau_\mathrm{coup} < \infty ) = \widehat{\E} \Big[ \int^{\tau_\mathrm{coup}}_0 \sum_{b} \big(r_1(W_t,b) + r_2(W_t,b) \big) \cdot \mathds{1}\{ r_1(W_t,b) \neq r_2(W_t,b) \} \; \mathrm{d}t \Big],
\end{equation}
Note that we could use $Z_t$ instead of $W_t$ in the previous equation.\\[-.3cm]

\noindent \textbf{Application.} We want to apply the above coupling with $r_1$ being the rates for the coalescing-stirring random walks $(S_t)_{t \ge 0}$ and $r_2$ the rates independent random walks~$(\X_t)_{t \ge 0}$ as in~\eqref{definition_cal_X} with jump rate~$1+\mathsf v$ . For any~$A \in \mathcal P_\mathrm{fin}$ and~$|f| \leq 1$,
\begin{equation}\label{Coupling_ineq_SandX}
    \tfrac12\big|E_A[f( S_t )] - E_A[f(\X_t)] \big| \le P_A( \exists t \ge 0: S_t \neq \X_t ) \leq P_A( \tau_\mathrm{coup} < \infty) =: g_\mathrm{stir}(A).
\end{equation}
It will be important to compare $g_\mathrm{stir}$ with~$g$ given in \eqref{CouplingEquation}. We note that for $A \in \mathcal{P}_\mathrm{fin}(V)$,
\begin{equation}\label{rate_ineq_Psi}
    \sum_{B \in \mathcal{P}_\mathrm{fin}(V) } \big( r_1(A,B) + r_2(A,B) \big) \cdot \mathds{1}\{ r_1(A,B) \neq r_2(A,B) \} \leq 3(1+\mathsf v)\cdot \Phi(A),
\end{equation}
where $\Phi(A) = \sum_{e = \{x,y\} \in E(\Z^d) } \mathds{1} \big\{ x,y \in A \big\}.$  By~\eqref{coup_upper_bound} and~\eqref{rate_ineq_Psi}, we have
\begin{equation}\label{g_stir_upper_bound}
    g_\mathrm{stir}(A) \leq 3(1+\mathsf{v}) \cdot E_A \Big[ \int^{\tau_\mathrm{coup}}_0 \Phi(S_t) \Big] \stackrel{(\ast)}= 3(1+\mathsf{v}) \cdot E_A \Big[ \int^{\tau_\mathrm{coup}}_0 \Phi(\X_t) \Big] \text{ for any } A \in \mathcal{P}_\mathrm{fin}(V),
\end{equation}
where $(\ast)$ holds by the observation given below~\eqref{coup_upper_bound}. Now we prove Theorem \ref{ThmCharacterizationIVM}.

\subsubsection{Proof of Theorem \ref{ThmCharacterizationIVM}}
% \begin{equation}
%     \begin{aligned}
%         g_\mathrm{stir}(A) := P_A( \rho < \infty ) & \leq E_A \int^\infty_0 r_2 \big( (X^1_t,\dots,X^1_t), \mathbf B \big) \; \mathrm{d}t \\
%         & \leq (1+\mathsf{v}) \cdot E_A \Big[ \int^\infty_0 \sum_{1 \le i < j \le k } \mathds{1}\{ d_G(X^i_s, X^j_s) \leq 2 \} \; \mathrm{d}s \Big],
%     \end{aligned}
% \end{equation}
% where $d_G$ is the graph distance. It will be important to compare $g_\mathrm{stir}$ with~$g$ given in \eqref{CouplingEquation}. 
% If $k=1$, $S_t = \X_t$ for all $t \ge 0$, then $g_\mathrm{stir}(A) = \textbf{P}(\tau < \infty) = 0$ for any $|A|=1$.

1. Assume $G$ has the continuous infinite collision property. Let $S_t = \{Y^1_t,Y^2_t\}$, $t\ge 0$, be the Markov chain~\eqref{rates_stirring_set_walker}. We will prove that~$(Y^1_t)$ and~$(Y^2_t)$ coalesce a.s. for any initial location~$Y^1_0, Y^2_0$. Then, following similar ideas from Theorem~\ref{MetaTheorem} we conclude that the only extremal stationary measures of the voter model with stirring are the consensus ones, which correspond to $\mu^\mathrm{stir}_0$ and $\mu^\mathrm{stir}_1$.

Consider the coupling with independent random walks. By the continuous infinite collision property, $Y^1_t$ and $Y^2_t$ will be neighbors a.s. Then they will not be neighbours anymore if one of them jumps apart or they coalesce, the former we call rejection. After a rejection, the walkers will eventually become neighbours. To avoid coalescing we need to reject infinitely many times. Since the rejection event has a positive probability (uniformly over any pair of vertices of the graph) not coalescing is not possible. Therefore
$$P_{x,y}( \exists t \geq 0: Y^1_t = Y^2_t ) = 1 \quad \text{for any } x,y \in V.$$ 

\noindent 2. Assume $G$ has the continuous finite collision property. We aim to follow the steps given in the proof of Theorem~\ref{MetaTheorem}. The only remaining task is to prove~\eqref{eq_FunctionGMetaTheorem} for $g_\mathrm{stir}$, after which the four steps will follow immediately.

First, we prove that 
\begin{equation}\label{exp_g_stirr_X_t}
    \lim_{t \to \infty}E_A [ g_\mathrm{stir}(\X_t) ] = 0 \quad \text{for any } A \in \mathcal{P}_\mathrm{fin}(V).
\end{equation}
Let $\tau_\mathrm{coll}$ be the first time when two random walks in $\X_t$ collide, that is, 
$$ \tau_\mathrm{coll} = \inf\{t \ge 0 : |\X_t|<|\X_0| \}. $$
Using a martingale argument as we did when we construct the coupling of Markov chains, it can be seen that 
\begin{equation}\label{tau_coll_equality}
    P_A( \tau_\mathrm{coll} < \infty ) = 2(1+\mathsf{v}) \cdot E_A \Big[ \int^{\tau_\mathrm{coll}}_0 \Phi(\X_s) \; \mathrm{d}s \Big].
\end{equation}
We observe that $\tau_\mathrm{coup} \leq \tau_\mathrm{coll}$, then
\begin{equation}\label{exp_g_stirr_to_0}
    \begin{aligned}
        E_A [g_\mathrm{stir}(\X_t)] & \stackrel{\eqref{g_stir_upper_bound}}\leq 3(1+\mathsf{v}) \cdot E_A \Big[ E_{\X_t} \Big[ \int^{\tau_\mathrm{coup}}_0 \Phi(\X_s)\; \mathrm{d}s  \Big]  \Big] \\
        & \hspace{1.2mm} \leq 3(1+\mathsf{v}) \cdot E_A \Big[ E_{\X_t} \Big[ \int^{\tau_\mathrm{coll}}_0 \Phi(\X_s)\; \mathrm{d}s  \Big]  \Big] \\
        & \stackrel{\eqref{tau_coll_equality}}= \tfrac32 \cdot E_A [ P_{\X_t}( \tau_\mathrm{coll} < \infty ) ] = \tfrac32 \cdot P_A( \exists s \ge t: |\X_s| < |\X_t|  ).
    \end{aligned}
\end{equation}
The finite collision property of $G$ and~\eqref{exp_g_stirr_to_0} implies~\eqref{exp_g_stirr_X_t}. 

% Given a measure $\mu$, recall the notation $f_\mu$ in~\eqref{f_mu_definition}. Let $\mu$ be a stationary measure for the voter model with stirring. Following the same steps of Claim~\ref{claim_1} from Theorem~\ref{MetaTheorem}, we have a sequence of real numbers~$c_\mathrm{stir}(k), k \in \N$, such that 
% \begin{equation}
%     \lim_{t \to \infty}E_A[ f_\mu( \mathcal X_t ) ] = c_\mathrm{stir}(k) \quad \text{for all } A \in \mathcal{P}_\mathrm{fin}(V) \text{ with } |A|=k.
% \end{equation}
% As a consequence, by taking the limit as $t \to \infty$ in~\eqref{Coupling_ineq_SandX} with $f = f_\mu$ and using the stationarity of $\mu$, we have that 
% \begin{equation}
%     \big| f_\mu(A) - c_\mathrm{stir}(|A|) \big| \leq g_\mathrm{stir}(A) \quad \text{for any } A \in \mathcal P_\mathrm{fin}(V).
% \end{equation}

Second, we aim to obtain
\begin{equation}\label{lim_g_stir_St}
    \lim_{t \to \infty}E_A[g_\mathrm{stir}(S_t)] = 0.
\end{equation}
Note that in the standard case, this follows from the monotonicity of $g$ and the coupling, which ensures that the set of coalescing random walks is always included in the set of independent random walks at any time. In this case, we still have that $g_\mathrm{stir}$ is monotone but our coupling only ensures the inclusion until $\tau_\mathrm{coup}$. 

Let $\tau_\mathrm{coal}$ be the first time when there is a coalescence in $S_t$, that is,
$$ \tau_\mathrm{coal} = \inf\{ t \ge 0: |S_t| < |S_0| \}. $$
Using a martingale argument as we did when we construct the coupling of Markov chains,
it can be seen that
\begin{equation}\label{tau_coal_equality}
    P_A( \tau_\mathrm{coal} < \infty ) = 2 \cdot E_A \Big[ \int^{\tau_\mathrm{coal}}_0 \Phi(S_s) \; \mathrm{d}s \Big].
\end{equation}
Next, we can easily prove that
\begin{equation}\label{coal_counting_limit}
     \lim_{t \to \infty} E_A[ \text{number of coalescences of } (S_s)_{s \ge t} \mid (S_r)_{0 \leq r \leq t} ] = 0 \quad \text{ for any } A \in \mathcal{P}_\mathrm{fin}(V).
\end{equation}
As a consequence
\begin{equation}\label{exp_St_limit}
    \begin{aligned}
    E_{S_t}\Big[ \int^{\tau_\mathrm{coal}}_0 \Phi(S_s) \; \mathrm{d}s \Big] & \stackrel{\eqref{tau_coal_equality}}= \tfrac12 \cdot P_{S_t}(\tau_\mathrm{coal}<\infty) \\[.2cm]
    & \vspace{5mm} \hspace{2.2mm }= \tfrac12 \cdot P_A( \exists s \ge t: |S_s| < |S_t| \mid (S_r)_{0 \leq r \leq t} ) \\[.2cm]
    & \hspace{2.2mm}\leq \tfrac12 \cdot E_A[ \text{number of coalescences of } (S_s)_{s \ge t} \mid (S_r)_{0 \leq r \leq t} ]. 
    \end{aligned}
\end{equation}
Hence, by~\eqref{coal_counting_limit} and~\eqref{exp_St_limit}, we have
\begin{equation}\label{exp_int_coal_St}
    \lim_{t \to \infty} E_{S_t}\Big[ \int^{\tau_\mathrm{coal}}_0 \Phi(S_s) \; \mathrm{d}s \Big] = 0 \hspace{2mm} P_A\text{-a.s.} \hspace{2mm} \text{ for any } A \in \mathcal{P}_\mathrm{fin}(V).
\end{equation}
We observe that $\tau_\mathrm{coup} \leq \tau_\mathrm{coal}$, then
\begin{equation}\label{exp_g_stir_st_bound}
    \begin{aligned}
        E_A[g_\mathrm{stir}(S_t)] & \stackrel{\eqref{g_stir_upper_bound}}\leq 3(1+\mathsf{v}) \cdot E_{S_t} \Big[ \int^{\tau_\mathrm{coup}}_0 \Phi(S_s) \; \mathrm{d}s \Big]\\
    & \hspace{1.4mm} \leq 3(1+\mathsf{v}) \cdot E_{S_t} \Big[ \int^{\tau_\mathrm{coal}}_0 \Phi(S_s) \; \mathrm{d}s \Big].
    \end{aligned}
\end{equation}
Hence~\eqref{lim_g_stir_St} follows from~\eqref{exp_int_coal_St} and~\eqref{exp_g_stir_st_bound}. Therefore, from~\eqref{exp_g_stirr_X_t},~(\ref{lim_g_stir_St}) and the procedure presented in the proof of Theorem \ref{MetaTheorem}, we have that~$\mathcal{I}_e = \{\mu^\mathrm{stir}_\alpha: \alpha \in [0,1]\}$.

\subsection{Ergodicity of the stationary measures in the Euclidean lattice}\label{sec_ergo_stat_stir}
In this section, we prove Proposition~\ref{prop_intro_ergo}. Given a vertex~$x \in \Z^d$, we denote by~$|x|_1$ its~$\ell^1$ norm. We write
$$B_1(L) = \{ x \in \Z^d: |x|_1 \leq L \}, \hspace{3mm} B_1(x,L) = x + B_1(L). $$
For $A,B \in \mathcal{P}_\mathrm{fin}(\Z^d)$ we write 
$$\mathrm{dist}(A,B) = \min \{ |x-y|_1:  x \in A,y \in B \}.$$ 
For~$x \in \Z^d$, we define the shift transformation~$\tau_x$ on~$\{0,1\}^{\Z^d}$ by $\tau_x (\eta)(y) = \eta(y-x).$ for~$y \in \Z^d$. This shift map induces the shift translation of subsets of~$\{0,1\}^{\Z^d}$. In particular, for~$A \in \mathcal{P}_\mathrm{fin}(\Z^d)$,
\begin{equation*}
    \tau_x(\{ \eta: \eta \equiv 1 \text{ on } A \}) = \{ \eta: \eta \equiv 1 \text{ on } A+x \},
\end{equation*}
where $A + x = \{y + x: y \in A\}$. 

It is easy to see that the $d$-Euclidean lattice has the infinite collision property for $d=1,2$, then by Theorem~\ref{ThmCharacterizationIVM} the only extremal stationary measures for the voter model with stirring are the consensus ones which are easily seen to be spatial ergodic. For $d \ge 3$, the $d$-Euclidean lattice has the finite collision property and the tail $\sigma$-algebra of a simple random walk is trivial, then by Theorem~\ref{ThmCharacterizationIVM} the set of extremal stationary measures for the voter model with stirring is precisely the family $\mu^\mathrm{stir}_\alpha$, $\alpha \in [0,1]$. To prove the ergodicity of these measures we use the next classical result concerning independent random walks. 
\begin{lemma}\label{last_lemma}
Fix $L \ge 0$ and $x,y \in \Z^d$ such that $|x-y|_1 \geq L$. Let $(X^1_t)_{t \geq 0}$ and $(X^2_t)_{t \geq 0}$ be independent simple random walks. Then
$$ P_{x,y} (\exists t \geq 0: |X^1_t - X^2_t|_1 \leq L ) \leq c|B_1(L)| \cdot \big(|x-y|_1-L\big)^{2-d}, $$
where $c$ is a dimension-dependent constant.
\end{lemma}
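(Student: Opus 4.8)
The plan is to reduce the statement to a single hitting-probability estimate for the difference walk and then invoke the classical Green's-function bound for transient random walk on $\Z^d$. I will treat the case $d \ge 3$; for $d \le 2$ the exponent satisfies $2-d \ge 0$, so (taking $c$ large enough) the right-hand side is bounded below by $1$ and the inequality holds trivially, the left-hand side being a probability.

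First I would observe that the difference process $D_t := X^1_t - X^2_t$ is itself a continuous-time random walk on $\Z^d$: each jump of $X^1$ displaces $D$ by a uniformly chosen unit vector, and each jump of $X^2$ displaces $D$ by minus such a vector (still uniform over the $2d$ unit vectors), so $D$ jumps at rate $2$ to a uniformly chosen nearest neighbour. Thus $D$ is a time change of a rate-$1$ simple random walk; in particular its set of visited sites, and hence all of its hitting probabilities, coincide with those of the discrete-time simple random walk on $\Z^d$, which is transient for $d \ge 3$. Since $\{|X^1_t - X^2_t|_1 \le L\}$ is the same event as $\{D_t \in B_1(L)\}$, writing $z_0 := x-y$ the quantity to bound is
\[
P_{x,y}\big(\exists t \ge 0: D_t \in B_1(L)\big) = P_{z_0}\big(\tau_{B_1(L)} < \infty\big),
\]
the probability that the walk started at $z_0$ ever enters $B_1(L)$.

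Next I would apply a union bound over the target set, $P_{z_0}(\tau_{B_1(L)} < \infty) \le \sum_{w \in B_1(L)} P_{z_0}(\tau_w < \infty)$, and use the first-entrance identity $P_{z_0}(\tau_w < \infty) = \Gamma(z_0,w)/\Gamma(w,w)$, where $\Gamma(u,w)$ denotes the Green's function (the expected number of visits to $w$ started from $u$) of the embedded discrete walk. Using the classical off-diagonal estimate $\Gamma(u,w) \le C|u-w|_2^{\,2-d}$ for the transient walk together with $\Gamma(w,w) \ge 1$, and the norm comparison $|u-w|_2 \ge |u-w|_1/\sqrt d$ (which, since $2-d<0$, converts the Euclidean bound into an $\ell^1$ bound at the cost of a dimensional constant), each summand is at most $C'\,|z_0 - w|_1^{\,2-d}$. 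For the geometry, note that for $w \in B_1(L)$ we have $|z_0 - w|_1 \ge |z_0|_1 - |w|_1 \ge |x-y|_1 - L$, and since $2-d<0$ this yields $|z_0-w|_1^{\,2-d} \le (|x-y|_1 - L)^{2-d}$. Summing the $|B_1(L)|$ resulting upper bounds gives the claim with $c = C'$.

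The only nontrivial analytic input is the Green's-function bound $\Gamma(u,w) \le C|u-w|_2^{\,2-d}$, the standard potential-theoretic estimate for transient simple random walk on $\Z^d$ (see, e.g., Lawler--Limic or Spitzer); everything else is a union bound together with the elementary norm and triangle-inequality comparisons. The one point requiring a little care is the reduction in the first step—namely that the difference of the two independent continuous-time walks is again a (time-changed) simple random walk, so that its hitting probabilities are exactly those of the discrete-time walk for which the Green's-function identity and estimate apply.
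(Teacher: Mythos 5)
Your proposal is correct. The paper states this lemma without proof, citing it as a classical fact about independent random walks, and your argument --- passing to the difference walk $D_t = X^1_t - X^2_t$ (a time-changed simple random walk, so with the hitting probabilities of the discrete-time walk), applying a union bound over $B_1(L)$, and invoking the standard Green's function estimate $P_{z_0}(\tau_w < \infty) \le \Gamma(z_0,w)/\Gamma(w,w) \le C|z_0-w|_1^{2-d}$ together with the triangle inequality $|z_0 - w|_1 \ge |x-y|_1 - L$ --- is exactly the standard argument the paper is implicitly relying on.
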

% \begin{proof}
% Let $G(x,y) = \int^\infty_0 P_x(X^1_t = y) \; \mathrm{d}t$ be the Green function of a simple random walk. We have~$G(x,x) \geq 1$ and from the local central limit theorem (\cite{Lawler1991}, Section 1.2) there exists~$c > 0 $ such that~$G(x,y) \leq c \cdot |x-y|_1^{2-d}$. Note that~$(X^1_t-X^2_t)_{t \geq 0}$ has the same law as~$(X^1_{2t})_{t \geq 0}$. Then,
% \begin{equation}
%     \begin{aligned}
%     P_{x,y}( \exists t \geq 0: X^1_t - X^2_t = z ) & = P_{x-y}( \exists t \geq 0: X^1_t = z  ) \\[0.2cm]
%     & \stackrel{(\ast)}= \frac{G(x-y,z)}{G(z,z)} \leq G(x-y,z) \leq c \cdot |x-y-z|_1^{2-d},
%     \end{aligned}
% \end{equation}
% where $(\ast)$ follows from the Strong Markov property. Since $|x-y|_1 \ge L$, for any $z \in B_1(L)$ we have
% $$|x-y-z|_1 \geq |x-y|_1-L.$$
% Hence,
% \begin{equation*}
%     \begin{aligned}
%         P_{x,y}( \exists t \geq 0: |X^1_t - X^2_t| \leq L ) & = \sum_{z \in B_1(L)} P_{x,y}( \exists t \geq 0: X^1_t - X^2_t = z ) \\[0.2cm]
%         & \leq c |B_1(L)| \cdot (|x-y|_1-L)^{2-d}.        
%     \end{aligned}
% \end{equation*}

% \end{proof}

For every $\alpha \in [0,1]$, the measure $\mu^\mathrm{stir}_\alpha$ is easily seen to be spatial invariant. Now we prove that $\mu^\mathrm{stir}_\alpha$ is mixing, which implies ergodicity. Let $A,B \in \mathcal{P}_\mathrm{fin}(\Z^d)$, we define the Markov chains $(S^1_t)_{t \geq 0}$, $(S^2_t)_{t \geq 0}$ and $(S^3_t)_{t \geq 0}$ on $\textbf{P}$ with jumping rates \eqref{rates_stirring_set_walker} such that the following holds: 
\begin{itemize}
    \item $S^1_0 =A$, $S^2_0=B$, $S^3_0 = A \cup B$;
    \item $(S^1_t)$ and $(S^2_t)$ are independent;
    \item $(S^3_t)$ is defined as follows: let $\sigma := \inf\{t \geq 0: \mathrm{dist}(S^1_t,S^2_t)=1\}$,
\begin{enumerate}
    \item for $t \leq \sigma$, $S^3_t = S^1_t \cup S^2_t$,
    \item for $t > \sigma$, $S^3_t$ moves independently of $S^1_t$ and $S^2_t$. 
\end{enumerate}
\end{itemize}
Then
\begin{align*}
    \big| \mu^\mathrm{stir}_\alpha (\xi: \xi \equiv 1 \text{ on } A & \cup B ) - \mu^\mathrm{stir}_\alpha( \xi: \xi \equiv 1  \text{ on } A   ) \cdot \mu^\mathrm{stir}_\alpha( \xi: \xi \equiv 1 \text{ on } B  ) \big| \\
    & = \textbf{E} \big[ \alpha^{|S^3_\infty|} \big]  - \textbf{E} \big[ \alpha^{|S^1_\infty|} \big] \cdot \textbf{E} \big[ \alpha^{|S^2_\infty|} \big] = \textbf{E} \big[ \alpha^{|S^3_\infty|} - \alpha^{|S^1_\infty| + |S^2_\infty|} \big] \\
    & \leq \textbf{E}( \sigma < \infty ) \stackrel{(\ast)}\leq \sum_{u \in A, v \in B} P_{u,v}( \exists t \geq 0: |X^1_t - X^2_t|_1=1 ),
\end{align*}
where $(\ast)$ holds by the coupling with independent random walks. Next, we take~$B = B + x$. Hence, the mixing property
$$\lim_{|x|_1 \to \infty }\mu^\mathrm{stir}_\alpha( \xi: \xi \equiv 1 \text{ on } A \cup (B+x) ) - \mu^\mathrm{stir}_\alpha( \xi: \xi \equiv 1 \text{ on } A ) \cdot \mu^\mathrm{stir}_\alpha( \xi: \xi \equiv 1 \text{ on } B ) =  0$$
follows from an application of Lemma \ref{last_lemma} with $L = 1$.

\section{Voter model on dynamical percolation}\label{VMonDPSection}
The spaces of configuration are denoted by~$\Omega_\mathrm{site} = \{0,1\}^{\Z^d}$ and~$\Omega_{\mathrm{edge}} = \{0,1\}^{E(\Z^d)}$. \\[-.3cm]

\noindent \textbf{Voter model with range $R$.} Given $d,R\in \N$, the voter model $(\eta_t)_{t \geq 0}$ with range $R$ on the $d$-dimensional Euclidean lattice is a Feller process on $\Omega_\mathrm{site}$ with Markov pre-generator
\begin{equation*}
    \mathcal{L}_{\text{vm}} f(\eta) = \sum_{ \substack{x,y \in \Z^d: \\ 1 \leq |x-y|_1 \leq R} } \frac{f(\eta^{y \to x}) - f(\eta)}{ |B_1(R)|-1 },
\end{equation*}
where $f:\Omega_\mathrm{site} \to \R$ is any function that only depends on finitely many coordinates and~$\eta \in \Omega_\mathrm{site}$. 

As a consequence of Theorem \ref{MetaTheorem}, for~$d=1,2$, the only extremal stationary distributions are the consensus ones $\delta_{\bar{0}}$ and $\delta_{\bar{1}}$, and for $d \geq 3$, there is a family $\{\mu_\alpha$, $\alpha \in [0,1]\}$ of stationary measures which coincide with the set of extremal stationary distributions. \\[-.3cm]

\noindent \textbf{Dynamical percolation.} The basic bond percolation model on $(\Z^d,E(\Z^d))$ is obtained by fixing $p \in [0,1]$ and sampling the full configuration of edges from 
$$\pi^\mathrm{edge}_p := \bigotimes_{e \in E(\Z^d)}\text{Ber}(p).$$

Dynamical percolation is a dynamical variant of the basic percolation model. In this model, to each edge in $E(\Z^d)$, there corresponds a two-state (open, closed) continuous-time Markov chain (these chains being independent for different edges). The entire configuration of open and closed edges at time $t$, denoted by $\zeta_t$, is an element of~$\Omega_\mathrm{edge}$. Given~$\mathsf{v} \in (0,\infty)$, the evolution of the dynamical percolation~$(\zeta_t)$ at speed~$\mathsf{v}$ is a Markov process with pre-generator 
$$\mathcal{L}_\mathrm{dp}f(\zeta) = \sum_{e \in E(\Z^d)}c(e,\zeta,\mathsf{v}) \cdot [ f(\zeta^e) - f(\zeta) ],$$
where $f:\Omega_\mathrm{edge} \to \R$ is any function that only depends on finitely many coordinates,~$\zeta~\in~\Omega_\mathrm{edge}$,
$$ \zeta^e(e') = \left\{ \begin{array}{ll}
    \zeta(e') & \text{if } e' \neq e \\
    1-\zeta(e) & \text{if } e' = e
\end{array} \right. \quad \text{and} \quad c(e,\zeta,\mathsf{v}) = \left\{ \begin{array}{ll}
    \mathsf{v} \cdot p, & \text{if } \zeta(e)=0 \\
    \mathsf{v} \cdot (1-p), & \text{if } \zeta(e)=1
\end{array} \right. $$
This process is stationary with respect to $\pi^\mathrm{edge}_p$. 

In defining the voter model on dynamical percolation, we need to introduce the following definition. The indicator function of an event or set is denoted by $\mathds{1}_{\{ \cdot \}}$ or $\mathds{1}\{ \cdot \}$.

Given  $x,y \in \Z^d$ and $\zeta \in \Omega_\mathrm{edge}$, we write $x \stackrel{\zeta}{\leftrightarrow} y$ if there is a sequence of vertices~$\gamma(0), \dots, \gamma(n)$ so that $\gamma(0) = x$, $\gamma(n)=y$ and for every $i$, $\{\gamma(i) ,\gamma(i+1) \} \in E(\Z^d)$ and~$\zeta( \{\gamma(i) ,\gamma(i+1) \} ) = 1$. When the sequence of vertices is contained in $A \subset \Z^d$, we write~$x \stackrel{\zeta}{\leftrightarrow} y$ inside~$A$.

% {\color{red} (Here you are using a \textbf{Definition} environment, but in many places, you give definitions without using such an environment. I have nothing against \textbf{Definition} environments, but I think one should be more or less consistent with their use. When you write a paper and decide to use them, sometimes things get a bit awkward: occasionally you want to give very small definitions, and don't want to make a big deal out of them, so you can be flexible, and reserve the environment only for bigger things. In the present paper, I would simply not use \textbf{Definition} environments)}

Given $d,R \in \N$, $\mathsf{v} \in (0,\infty)$ and $p \in [0, 1]$, the voter model on dynamical percolation on~$(\Z^d,E(\Z^d))$ with range~$R$, environment speed~$\mathsf{v}$ and edge density~$p$, denoted by~$M_t = (\eta_t,\zeta_t)$,~$t \geq 0$, is the Feller process with state space $\Omega_\mathrm{site} \times \Omega_\mathrm{edge}$ and pre-generator: 
$$\mathcal{L}_\mathrm{vmdyn}f(\eta,\zeta) = \sum_{\substack{x,y \in \Z^d: \\ 1 \leq |x-y|_1 \leq R}} \frac{f(\eta^{y \to x},\zeta) - f(\eta,\zeta)}{ |B_1(R)|-1 }\cdot \mathds{1}\{ x \stackrel{\zeta}{\leftrightarrow} y \text{ inside } B_1(x,R) \} + \mathcal{L}_\mathrm{dp}f_\eta(\zeta), $$
where $f: \Omega_\mathrm{site} \times \Omega_\mathrm{edge} \to \R$ is any function that only depends on finitely many coordinates,~$(\eta,\zeta) \in \Omega_\mathrm{site} \times \Omega_\mathrm{edge}$, and $f_\eta(\zeta) = f(\eta,\zeta)$. It can be checked that this spin system is well-defined for any choice of parameters.

As a system of coalescing random walks is a dual for the voter model, it is natural to expect that a system of \textbf{coalescing random walks on dynamical percolation} will serve as a dual for the voter model on dynamical percolation. We define the process and prove this duality in Section \ref{duality_stat_measures_vmodyn_section}. In Section \ref{section_characterization_stat_measures} we use this duality relationship to study the stationary measures for~$(M_t)$, the key aspect of this analysis is the understanding of the collision of \textbf{independent random walks on dynamical percolation}. 

\subsection{Random walks on dynamical percolation}\label{RandomWalksonDPSection}
In this section, we study random walks on dynamical percolation. Analogously to what was done in Section \ref{MetaTheoremSection}, we aim to couple a system of coalescing random walks with a system of independent random walks. This coupling will be done in Section \ref{section_characterization_stat_measures}.

%Let~$p \in (0,1]$,~$\mathsf v > 0$,~$R > 0$. %Fix~$\zeta_0 \in \Omega_\mathrm{edge}$ and~$x_1,\ldots, x_k \in \mathbb Z^d$.
Given $k \in \N$, for $\mathbf{x} = (x_1,\dots,x_k) \in (\Z^d)^k$ and $y \in \Z^d$, we write
\begin{equation}\label{x_bf_i_y}
    \mathbf{x}^{i,y} = (x^{i,y}_1, \dots, x^{i,y}_k), \; \text{ where } x^{i,y}_j = \left\{ \begin{array}{cc}
    x_j & \text{ if } i \neq j  \\
    y & \text{ if } i = j. 
\end{array} \right.
\end{equation}
A system of~$k$ \textbf{independent random walks on a single dynamical percolation environment} is a Markov process~$(X^1_t,\ldots, X^k_t, \zeta_t)_{t \ge 0}$ on~$(\mathbb Z^d)^k \times \Omega_\mathrm{edge}$ with generator
\begin{equation}\label{Gen_irwdyn}    
\begin{aligned}
    &\mathcal L_{\mathrm{irwdyn}}f(\mathbf{x},\zeta) = \mathcal L_{\mathrm{dp}}f_{\mathbf{x}}(\zeta) \\
    &+ \frac{1}{|B_1(R)|-1}\sum_{i=1}^k \sum_{y \in B_1(x_i,R)} \mathds{1}\{x_i \xleftrightarrow{\zeta } y \text{ inside }B_1(x_i,R)\} \cdot \big(f_\zeta(\mathbf{x}^{i,y}) - f_\zeta(\mathbf{x}) \big),
\end{aligned}
\end{equation}
where $f: (\Z^d)^k \times \Omega_\mathrm{edge} \to \R$ is any function such that $f_\mathbf{x}(\zeta) = f(\mathbf{x},\zeta)$ only depends on finitely many coordinates and $f(\mathbf{x},\zeta) = f_\mathbf{x}(\zeta) = f_\zeta(\mathbf{x})$. 

We have the following results for independent random walks on a single dynamical percolation environment.
\begin{proposition}\label{CollisionRWd12}
Let $p \in (0,1], \mathsf{v} >0$ and $R \in \N$. In dimensions~$1$ and~$2$, two independent random walks (started from arbitrary positions) with range~$R$ on a single dynamical percolation environment (started from an arbitrary configuration and with parameters $p$ and $\mathsf{v}$) meet almost surely.
\end{proposition}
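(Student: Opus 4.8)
The plan is to deduce the statement by combining two ingredients: a \emph{recurrence} statement, asserting that the two walks return to within distance $R$ of each other infinitely often as long as they have not yet met, and a \emph{local coalescence} statement, asserting that each time the walks are within distance $R$ there is a probability bounded below, uniformly over the current site and edge configuration, that they meet within a bounded amount of time. Granting these two facts, the conclusion is routine: letting $q>0$ be the uniform lower bound on the meeting probability during a close encounter, if the walks never met then the recurrence statement would yield infinitely many time-separated close encounters, and at each of them the strong Markov property provides an independent chance at least $q$ of meeting; a Borel--Cantelli (or Lévy $0$--$1$) argument then forces meeting with probability one, a contradiction. Hence the walks meet almost surely.

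For the local coalescence statement I would argue as follows. Suppose $|X^1_t - X^2_t|_1 \le R$ at some time. Only the finitely many edges inside a bounded region surrounding the two walks are relevant, and since $p>0$ and $\mathsf v>0$, over any fixed time window these edges realize, with probability bounded below uniformly over the initial edge configuration, any prescribed target configuration --- in particular one in which the two walks lie in a common $\ell^1$-ball connected by open edges. Conditioning on such a favourable environment, one of the two walks makes the single connecting jump onto the location of the other before the environment changes again, again with probability bounded below by the rates in~\eqref{Gen_irwdyn}. Uniformity holds because only finitely many edges and a bounded spatial region are involved, so this part should be straightforward.

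The recurrence statement is the heart of the matter, and I expect it to be the main obstacle. The structural feature I would exploit is that in dynamical percolation distinct edges evolve independently, so whenever the balls $B_1(X^1_t,R)$ and $B_1(X^2_t,R)$ are disjoint the two walks evolve as \emph{independent} random walks, each on its own independent dynamical percolation environment. By the translation and reflection invariance of the law of the environment, each such walk has increments that are symmetric in law (hence zero drift), so I would aim to show that, when far apart, the difference $X^1_t - X^2_t$ behaves diffusively and is recurrent in dimensions $1$ and $2$, i.e.\ it returns to $\{z: |z|_1 \le R\}$ infinitely often almost surely; the close-encounter regime, where the environment is shared, can be absorbed into a comparison with the difference of two genuinely independent dynamical-percolation walks. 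In $d=1$ this is comparatively soft: the walk has no permanent traps because $p>0$ forces perpetual refreshing, and a non-degenerate, zero-drift symmetric walk on $\Z$ is recurrent. The delicate case is $d=2$, which is critical, and where the real work lies: here one needs quantitative diffusive control --- Gaussian-type heat-kernel bounds, or an equivalent recurrence criterion adapted to a walk in this dynamic random environment --- to rule out transience. I would obtain this either by invoking such estimates for random walk on dynamical percolation or by establishing the required recurrence criterion directly from the stationarity and reversibility of the environment together with a variance bound on the displacement.
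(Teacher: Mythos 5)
Your outline (the walks come within distance $R$ of each other infinitely often, at each such encounter they coalesce with probability bounded below uniformly in the configuration, and Borel--Cantelli finishes) is the natural strategy, and the local-coalescence step is handled correctly. But the step you yourself flag as ``the heart of the matter'' --- recurrence of the difference $X^1_t-X^2_t$ in $d=2$ --- is precisely where the proposal stops, and the means you suggest for closing it would not suffice as stated. Zero drift by symmetry plus a variance bound does not give recurrence in the critical dimension unless the increments are genuinely i.i.d.: the difference process here is not a random walk (the walkers carry memory of the edges they have inspected, and the two walkers are not exactly independent even when currently far apart, since independence requires their inspected edge sets to remain disjoint), so Chung--Fuchs does not apply directly, and a Kipnis--Varadhan-type CLT is compatible with transience at the critical dimension. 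One needs either a regeneration structure producing honest i.i.d.\ increments --- this is exactly what Section~\ref{SectionRegenrationTimes} of the paper builds, giving $(X_{\sigma_n}-Y_{\sigma_n})_n$ as a mean-zero walk with exponential moments --- or two-sided on-diagonal heat-kernel bounds plus a second-moment argument for the number of returns. Moreover, your symmetry argument for zero drift uses stationarity of the environment, so it does not by itself deliver the uniformity over \emph{arbitrary} initial configurations that the statement demands.

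The paper's own proof is much shorter and has a different center of gravity: the collision result for the environment started from $\pi^{\mathrm{edge}}_p$ is imported wholesale from Corollary~1.3 of \cite{Hutchcroft2022} (which contains the recurrence analysis you defer), and the actual content of the paper's argument is the reduction from an arbitrary initial configuration $\zeta$ to the stationary one. This is done via Lemma~\ref{ArgumentCouplingRW}: with probability at least $1-\epsilon$, the trajectories depend on $\zeta_0$ only through the finitely many edges in $E_1(x,r)\cup E_1(y,r)$, because all other edges refresh before the walks can reach them; one then conditions a stationary environment to agree with $\zeta$ on that finite edge set --- an event of positive $\pi^{\mathrm{edge}}_p$-probability --- and the almost-sure collision survives the conditioning. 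Your proposal does not address this reduction, and it is the one part of the proof the paper actually has to carry out. If you wish to keep your from-scratch program, the missing ingredient is the regeneration-time construction; with it, your two-step scheme does go through.
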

The following proposition shows that in dimensions 3 and higher, two independent random walks with range~$R$ on a single dynamical percolation environment have a positive probability of never meeting.
\begin{proposition}\label{Bigtheorem}
\iffalse
Let $d \geq 3$, $p \in (0,1]$, $\mathsf v > 0$ and $R \in \N$. Let~$\mathcal U$ be a graphical construction for dynamical percolation with density~$p$ and speed~$\mathsf v$ on~$\Z^d$, and let~$\mathcal T_1$ and~$\mathcal T_2$ be independent instruction manuals for random walks with range~$R$ on~$\Z^d$. Then,
\begin{equation}
\forall \ell > 0,\quad   \lim_{L \to \infty}\; \sup_{x,y:|x-y|_1 >L}\;\sup_{ \zeta \in \Omega_\mathrm{edge} } \Prob \big( \exists t \geq 0: |\mathbb{X}_t^{\mathcal U,\mathcal T_1}(\zeta,x) - \mathbb{X}_t^{\mathcal U,\mathcal T_2}(\zeta,y)|_1 \leq \ell \big) = 0.
\end{equation}
\fi
Let $d \geq 3$, $p \in [0,1]$, $\mathsf v > 0$ and $R \in \N$. For any~$\ell \ge 0$ and~$\epsilon > 0$, there exists~$L > 0$ such that the following holds. Let~$\zeta \in \Omega_\mathrm{edge}$,~$x,y \in \mathbb Z^d$ with~$|x-y|_1 > L$, and let~$(X^1_t,X^2_t,\zeta_t)_{t \ge 0}$ be a system of two independent random walks on a single dynamical percolation environment with parameters~$p,\mathsf v,R$, started from~$X^1_0 = x$,~$X^2_0 =y$, and~$\zeta_0 = \zeta$. Then,
\begin{equation*}
    \mathbb P(\exists t \ge 0:\; |X^1_t - X^2_t|_1 \le \ell) < \epsilon.
\end{equation*}
\end{proposition}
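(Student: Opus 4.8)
\textbf{Proof proposal for Proposition~\ref{Bigtheorem}.}

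The plan is to compare the difference walk $D_t := X^1_t - X^2_t$ against a \emph{free} random walk that ignores the percolation environment, and to show that the environment only ever \emph{slows down} the walks without fundamentally changing their transience. The key observation is that each walker $X^i_t$ attempts a jump at rate $1$ (the clock of the vertex rings), but the attempt succeeds only when the target is connected by open edges inside the local ball. Thus $X^i_t$ is a \emph{time-changed} version of a free random walk: it makes the same sequence of attempted jumps, but some are suppressed. I would first make this precise by a graphical construction in which both the environment $(\zeta_t)$ and the walkers are driven by a common family of Poisson clocks, and then argue that the trajectory of $X^i_t$ is obtained from a free symmetric random walk $\widetilde X^i$ by deleting the suppressed steps. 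Since the deleted steps are exactly those where the local ball is disconnected, and since the distribution of which attempts succeed depends only on the environment (not on the walker's past positions beyond its current location), the embedded jump chain of $X^i_t$ is still a symmetric, finite-range, irreducible random walk on $\Z^d$ with the same transient behavior for $d \ge 3$.

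The core estimate I would aim for is a \emph{uniform transience} bound for the difference walk. Concretely, I want to show that there is a constant $C = C(d,R)$ such that, uniformly over the environment configuration $\zeta$ and over time, the Green's-function-type quantity controlling returns of $D_t$ to the ball $\{|z|_1 \le \ell\}$ decays like $|x-y|_1^{2-d}$, exactly as in Lemma~\ref{last_lemma} for genuinely independent walks. The natural route is to establish a comparison: the probability that $D_t$ ever enters $\{|z|_1 \le \ell\}$ starting from $|x-y|_1 > L$ is bounded above by the analogous probability for two \emph{free} independent walks, possibly after a deterministic inflation of $\ell$ and $L$ absorbing the range-$R$ and connectivity effects. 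If one can show that suppressing jumps can only make the walkers \emph{less} likely to meet (a monotonicity/coupling statement), then Lemma~\ref{last_lemma} applied to the free walks gives the bound $c\,|B_1(\ell')| \cdot (L - \ell')^{2-d} \to 0$ as $L \to \infty$, which is precisely the conclusion with $\epsilon$ made small by taking $L$ large.

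The main obstacle, and the step requiring the most care, is that the two walkers $X^1$ and $X^2$ are \emph{not} independent: they share the \emph{same} dynamical percolation environment $\zeta_t$. This common environment correlates their suppression events, so the difference walk $D_t$ is not a Markov chain on $\Z^d$ by itself, and the clean comparison with independent free walks is not automatic. I would handle this by exploiting the finite range $R$: the jump of $X^i$ depends only on the edges inside $B_1(X^i_t, R)$, so \emph{whenever the two walkers are far apart} (say $|D_t|_1 > 2R$), their local environments are disjoint and their jump mechanisms are conditionally independent given the current positions. The strategy is therefore to run the walks until the first time they come within distance $2R + \ell$ of each other, controlling this hitting probability using the conditional independence in the far regime together with the free-walk estimate; the environment in the far regime decouples, so the walks behave like independent walks and Lemma~\ref{last_lemma} applies. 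The delicate bookkeeping is to ensure the decoupling holds \emph{uniformly} in the starting environment $\zeta$ and to confirm that the embedded jump chain, despite the random (environment-dependent) holding times, still has the transient Green's function decay; here I would invoke that the holding rates are bounded between two positive constants (since $p \in (0,1]$ guarantees a uniformly positive probability that the relevant local ball is fully open, hence a uniformly positive success rate), so the time change is tame and does not affect the $|x-y|_1^{2-d}$ scaling.
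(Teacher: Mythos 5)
Your high-level intuition --- decouple the two walkers when they are far apart, then use transience in $d\ge 3$ --- is the right one, and it overlaps in spirit with the paper's strategy. But two of your key steps have genuine gaps. First, the claimed monotonicity (``suppressing jumps can only make the walkers less likely to meet'') is unsupported and, as stated, not usable: for a \emph{fixed} environment realization the successful-jump chain of $X^i$ is not a symmetric random walk at all (the environment can systematically allow jumps in some directions and block others), so the quenched difference walk has no a priori comparison with a free symmetric walk, and Lemma~\ref{last_lemma} --- which is a statement about genuinely independent simple random walks --- cannot be invoked after ``a deterministic inflation of $\ell$ and $L$.'' Hitting probabilities are invariant under time changes, but here it is the \emph{jump distribution}, not just the clock, that the environment modifies. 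The paper's substitute for this missing step is the regeneration-time construction of Section~\ref{SectionRegenrationTimes}: one samples the two walks at the successive times when every previously inspected edge has refreshed, obtaining a genuine random walk $(X_{\sigma_n},Y_{\sigma_n})$ with i.i.d.\ symmetric increments having exponential moments, to which a local CLT bound applies and yields the summable $n^{-d/2+d/8}$ tail in Lemma~\ref{lemma_limit_fl_g}. Your proposal contains no mechanism for extracting i.i.d.\ increments, which is the heart of the matter.

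Second, your decoupling claim --- that when $|X^1_t-X^2_t|_1>2R$ the two jump mechanisms are ``conditionally independent given the current positions'' --- is false, because the shared environment remembers past visits: walker $1$ may earlier have inspected (and thereby conditioned) edges in the region where walker $2$ now sits, and those edges retain their revealed states for an exponential time. This is exactly why the paper tracks the inspected-edge sets $\mathbb A_t,\mathbb B_t$ and phrases the single-versus-separate-environment coupling (Lemmas~\ref{couple_single_separate} and~\ref{lemma_main}) in terms of $\min\big(|X^1_t-X^2_t|_1,\,\mathrm{dist}(X^1_t,F^2_t),\,\mathrm{dist}(X^2_t,F^1_t)\big)$ rather than the positions alone; the cost of the coupling is then controlled by the expected time spent with overlapping exploration regions, i.e.\ by $g_{2R}(x,y)$ from~\eqref{g_function_x,y}. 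Finally, your plan leaves the uniformity over the initial configuration $\zeta$ as ``bookkeeping,'' but this requires its own argument (the paper's Lemma~\ref{ArgumentCouplingRW} plus the finite-energy bound~\eqref{sum_1_collision} with the factor $c(p)^{|E_r|}$), and your appeal to a uniformly positive success rate breaks down at $p=0$, which the proposition allows. In summary: the skeleton is right, but the two load-bearing steps (a transience estimate valid for walks in a dynamic random environment, and a decoupling that accounts for the environment's memory of past explorations) are missing and need the regeneration-time and explored-edge machinery, or something equivalent.
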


\begin{corollary}\label{CoroBigThm} 
Let $d \geq 3$, $p \in (0,1]$, $\mathsf v > 0$ and $R \in \N$. Let~$(X^1_t,X^2_t,\zeta_t)_{t \ge 0}$ be a system of two independent random walks on a single dynamical percolation environment with parameters~$p,\mathsf{v},R$. Then, for any $\ell \geq 0$,
\begin{equation*}
\lim_{t \to \infty} \sup_{\zeta \in \Omega_\mathrm{edge}}\sup_{x,y \in \Z^d} \Prob \big( \exists s \geq t:  |X_s^1-X_s^2|_1 \leq \ell \big) = 0. 
\end{equation*}

\end{corollary}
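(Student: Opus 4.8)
The plan is to deduce the Corollary from Proposition~\ref{Bigtheorem} by running the system up to time~$t$ and then invoking the Markov property, so that from time~$t$ onward the Proposition applies---provided the two walks happen to be far apart at time~$t$. Write $\Prob_{x,y,\zeta}$ for the law of the system started from $X^1_0=x$, $X^2_0=y$, $\zeta_0=\zeta$. Fix $\ell \ge 0$ and $\epsilon>0$, and let $L=L(\ell,\epsilon)$ be the separation threshold furnished by Proposition~\ref{Bigtheorem}. By the time-homogeneous Markov property at time~$t$,
\[
\Prob_{x,y,\zeta}\big(\exists s \ge t:\; |X^1_s - X^2_s|_1 \le \ell\big) = \E_{x,y,\zeta}\Big[\,\Prob_{X^1_t, X^2_t, \zeta_t}\big(\exists s \ge 0:\; |X^1_s - X^2_s|_1 \le \ell\big)\Big].
\]
On the event $\{|X^1_t - X^2_t|_1 > L\}$ the inner probability is $<\epsilon$ by Proposition~\ref{Bigtheorem} (which applies from the arbitrary environment $\zeta_t$), while on its complement I bound it by~$1$. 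Taking the supremum over starting data, this gives
\[
\sup_{\zeta}\sup_{x,y}\Prob_{x,y,\zeta}\big(\exists s \ge t: |X^1_s - X^2_s|_1 \le \ell\big) \le \epsilon + \sup_{\zeta}\sup_{x,y}\Prob_{x,y,\zeta}\big(|X^1_t - X^2_t|_1 \le L\big).
\]
Since $\epsilon$ is arbitrary and $L$ does not depend on~$t$, the Corollary reduces to the \emph{fixed-time spreading estimate}: for every fixed~$L$, $\sup_{\zeta}\sup_{x,y}\Prob_{x,y,\zeta}(|X^1_t - X^2_t|_1 \le L) \to 0$ as $t \to \infty$.

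Next I would reduce this pair statement to a single-walk statement, using that conditionally on the whole environment trajectory $\omega=(\zeta_s)_{s\le t}$ the two walks are independent. Writing $q^a(u)=\Prob(X_t=u\mid\omega,\,X_0=a)$ for the quenched transition law, conditional independence gives $\Prob(X^1_t-X^2_t=z\mid\omega)=\sum_w q^x(w+z)q^y(w)\le \max_u q^x(u)$ for each~$z$; summing over the at most $|B_1(L)|$ values of~$z$ with $|z|_1\le L$ and averaging over~$\omega$,
\[
\Prob_{x,y,\zeta}\big(|X^1_t - X^2_t|_1 \le L\big) \le |B_1(L)| \cdot \E\big[\max_u q^x(u)\big].
\]
This bound is uniform in~$y$, and by translation invariance (shifting both the start and~$\zeta$) the supremum over~$x$ collapses to a supremum over the initial environment. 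Hence it suffices to prove the single-walk non-concentration $\sup_{\zeta \in \Omega_\mathrm{edge}} \E[\max_u q^0(u)] \xrightarrow{t\to\infty} 0$.

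The single-walk non-concentration is the main obstacle, and it is exactly where the hypothesis $p \in (0,1]$ (rather than $p\in[0,1]$ as in Proposition~\ref{Bigtheorem}) is needed: if $p=0$ the environment eventually freezes with all edges closed and the walk stops, so no spreading occurs. For $p>0$ every edge is refreshed at a positive rate, so the walk keeps jumping and, infinitely often, finds its whole range-$R$ neighborhood open, performing at those moments a genuinely spread-out step, uniform on $B_1(0,R)\setminus\{0\}$. To quantify this I would bound $\E[\max_u q^0(u)]=\E\|q^0\|_\infty \le \E\|q^0\|_2 \le (\E\,\|q^0\|_2^2)^{1/2}=\Prob(X_t=X'_t)^{1/2}$, where $X,X'$ are two copies driven independently on the \emph{same} environment from the common start~$0$; the task becomes showing that this collision-at-a-fixed-time probability decays to~$0$ uniformly in~$\zeta$. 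I expect this to follow either from the diffusive/heat-kernel control of a single random walk on dynamical percolation developed en route to Proposition~\ref{Bigtheorem}, or from a Fourier/local-central-limit argument exploiting the infinitely many uniform steps; making the estimate uniform over the initial environment is the delicate point. Once it is in hand, the three displayed reductions chain together to give the Corollary.
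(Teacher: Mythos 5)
Your first reduction (Markov property at the deterministic time $t$, then Proposition~\ref{Bigtheorem} on the event that the walks are $L$-separated at time $t$) is correct, and so is the quenched-independence step bounding $\Prob(|X^1_t-X^2_t|_1\le L)$ by $|B_1(L)|\,\E[\max_u q^x(u)]$. But the argument then terminates at the single-walk non-concentration estimate $\sup_{\zeta}\E[\max_u q^0(u)]\to 0$, which you explicitly leave unproved ("I expect this to follow\dots"). This is a genuine gap, and not a small one: that estimate is an on-diagonal heat-kernel decay for a random walk on dynamical percolation, uniform over the initial environment, and nothing in the paper supplies it. Your proposed bound $\E\|q^0\|_\infty\le\Prob(X_t=X'_t)^{1/2}$ turns it into a fixed-time collision probability for two walks started at the \emph{same} point on the \emph{same} environment --- precisely the regime where the paper's coupling with separate environments (Lemma~\ref{couple_single_separate}) gives nothing, since its error term $g_{2R}(x,y)$ is only small when $x$ and $y$ are far apart; and the regeneration-time LCLT bound~\eqref{MeetinginaBallIndependent} is for separate environments, at regeneration times rather than deterministic times, and with the environment started from stationarity. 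Filling this in would require substantial new work.

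The paper avoids the issue entirely by replacing the deterministic time $t$ with a stopping time: $\tau$ is the first \emph{integer} time at which $|X^1_\tau-X^2_\tau|_1\ge L$ (see~\eqref{def_tau_apart_L}). A uniform lower bound $b>0$ on the probability of achieving separation $\ge L$ within one time unit, from any positions and any environment (this is where $p>0$ enters), gives $\Prob(\tau>t)\le(1-b)^{\lfloor t\rfloor}$, and then the strong Markov property at $\tau$ together with Proposition~\ref{Bigtheorem} yields
\[
\Prob\big(\exists s\ge t: |X^1_s-X^2_s|_1\le\ell\big)\le \E\big[h(X^1_\tau,X^2_\tau,\zeta_\tau)\big]+\Prob(\tau>t)\le \epsilon/2+(1-b)^{\lfloor t\rfloor}.
\]
No anti-concentration estimate is needed, only that the walks are far apart at \emph{some} a.s.\ finite time with geometric tails. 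I would suggest either adopting this stopping-time decomposition or, if you want to keep your route, actually proving the uniform non-concentration bound, which is the hard part of your plan.
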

To prove these results, it will be convenient to introduce a graphical construction for independent random walks on dynamical percolation. In order to do so, we first introduce notation. Given $R \in \N$, we define an \textbf{instruction manual} for a random walk on~$\mathbb Z^d$ with range~$R$ as a pair~$(\mathcal T, \mathsf m)$, where~$\mathcal T$ is a Poisson point process on~$[0,\infty)$ with intensity~1, and~$\mathsf m: \mathcal T \to B_1(R)\backslash \{0\}$ is such that, conditionally on~$\mathcal T$, the values~$\{\mathsf m(t): \; t \in \mathcal T\}$ are independent and uniformly distributed on~$B_1(R) \backslash \{0\}$. The idea here is of course that~$\mathcal T$ encodes the times when the walker attempts to jump, and the marks~$\mathsf m(t)$ encode the spacial displacements associated to the jumps. We sometimes abuse notation and denote the pair~$(\mathcal T,\mathsf m)$ by~$\mathcal T$.

Given $p \in [0,1]$ and $\mathsf{v} > 0$, to construct a dynamical percolation~$(\zeta_t)_{t \geq 0}$ with density~$p$ and speed~$\mathsf{v}$ on $\Z^d$, we independently assign to every~$e \in E(\Z^d)$ a pair~$(\mathcal{U}^e, \phi^e)$, where~$\mathcal{U}^e$ is a Poisson process on $[0,\infty)$ with intensity $\mathsf{v}>0$ and $\phi^e$ is a family of i.i.d. random variables~$\{\phi^e_t : t \in [0,\infty)\}$ with distribution~Ber($p$). We gather all of these processes in $\mathcal{U}$. Given an initial configuration~$\zeta_0 \in \Omega_\mathrm{edge}$, for every $e \in E(\Z^d)$ and $t \geq 0$, we set
$$\zeta_t(e) = \left\{ \begin{array}{lll}
        \zeta_0(e) & \text{ if }  \mathcal{U}^e \cap [0,t] = \varnothing, & \\
        \phi_r^e  & \text{ if }  \mathcal{U}^e \cap [0,t] \neq \varnothing, & r = \max\{\mathcal{U}^e \cap [0,t]\}.
\end{array} \right.$$ 
Given a graphical construction $\mathcal{U}$ for dynamical percolation $(\zeta_t)_{t \ge 0}$, we say that this process is \textbf{started from stationarity} if $\zeta_0$ is a random edge configuration with distribution~$\pi^\mathrm{edge}_p$ (of course, whenever the initial edge configuration is random, we assume that it is independent of the graphical construction $\mathcal{U}$).

Fix~$R \in \N$,~$p \in [0,1]$ and~$\mathsf v > 0$. Let~$\mathcal U$ be a graphical construction for dynamical percolation $(\zeta_t)$ with density~$p$ and speed~$\mathsf v$ on~$\mathbb Z^d$, and let~$\mathcal T$ be an instruction manual for a random walk with range~$R$ on~$\mathbb Z^d$ ($\mathcal T$ and~$\mathcal U$ are always taken independent). Also fix~$\zeta_0 \in \Omega_{\mathrm{edge}}$ and~$x \in \mathbb Z^d$.
We now define the \textbf{random walk flow}
\begin{equation*}
    \mathbb X_t(\zeta,x) = \mathbb X_t^{\mathcal U, \mathcal T}(\zeta, x),\quad t \ge 0.
\end{equation*}
The intuitive idea is that the walker attempts to jump as dictated by its instruction manual, but it is only allowed to perform a jump, say from position~$w$ to position~$w + \mathsf m(t)$, if the percolation configuration at time~$t$ contains an open path between these two points, and this path stays within distance~$R$ from~$w$. Let us now give the formal definition. Enumerate~$\mathcal T = (t_1, t_2, \ldots)$ in increasing order. Set~$\mathbb X_t(\zeta,x) = x$ for~$t \in [0,t_1)$. For~$n \in \mathbb N$, assume that~$w=\mathbb X_{t_n^-}(\zeta,x)$ has been defined, and set, for~$t \in [t_n,t_{n+1})$:
\[
\mathbb X_{t}(\zeta, x) = \begin{cases}
    w+\mathsf m(t_n) & \text{if } w \xleftrightarrow{\zeta_{t_n}} w+\mathsf{m}(t_n) \text{ inside }B_1(w,R);\\[.1cm]
    w&\text{otherwise.}
\end{cases}
\]

Given a  graphical construction~$\mathcal U$ for dynamical percolation and~$\zeta \in \Omega_{\mathrm{edge}}$, points~$x_1,\ldots, x_k \in \Z^d$ and independent instruction manuals~$\mathcal T_1,\ldots, \mathcal T_k$, the processes
\begin{equation*}
(\mathbb X_t^{\mathcal U,\mathcal T_1}(\zeta,x_1))_{t \ge 0},\quad \dots, \quad (\mathbb X_t^{\mathcal U,\mathcal T_k}(\zeta,x_k))_{t \ge 0}
\end{equation*}
are independent random walks on a single dynamical percolation environment, that is, their law coincides with the process with the generator given earlier. Note that the initial positions~$x_1,\ldots, x_k$ are not necessarily distinct.

Proposition~\ref{CollisionRWd12} is proved in Section~\ref{CollisionRWd12SectionProof}, Proposition~\ref{Bigtheorem} and Corollary~\ref{CoroBigThm} in Section~\ref{SectionRegenrationTimes}. The proofs rely on a regeneration time technique that is presented in Section~\ref{SectionRegenrationTimes}.

\subsubsection{Dimension 1 and 2}\label{CollisionRWd12SectionProof}
In this section, we prove Proposition \ref{CollisionRWd12}. Fix $p \in (0,1]$, $\mathsf{v} > 0$ and $R \in \N$. Let~$\zeta \sim \pi^\mathrm{edge}_p$,~$x,y \in \Z^d$, and let~$(X^1_t,X^2_t,\zeta_t)_{t \ge 0}$ be a system of two independent random walks on a single dynamical percolation environment with parameters $p,\mathsf{v},R$ started from~$X^1_0 = x$,~$X^2_0 = y$ and~$\zeta_0 = \zeta$. Following the ideas of Corollary 1.3 of \cite{Hutchcroft2022} with minor modifications, we obtain that for any~$x,y \in \Z^d$,
\begin{equation}\label{CollisionRWd12Eq1}
    \Prob \big( \exists t \geq 0: X_t^1 = X^2_t \big) = 1.
\end{equation}
The remainder of this section is dedicated to introducing tools for extending statements from the environment starting at stationarity, such as \eqref{CollisionRWd12Eq1}, to any initial edge configuration. To do so, we introduce some notation. We write
$$ E_1(x,L) = \{ e = \{x,y\} \in E(\Z^d): x,y \in B_1(x,L) \}.$$
For $A \subseteq \Z^d$ and $E \subseteq E(\Z^d)$ we write 
$$\mathrm{dist}(A,E) = \min \{ |x-y|_1:  x \in A, \; y \in \cup_{e \in E}\{e\} \}.$$

\begin{lemma}\label{ArgumentCouplingRW}
For any~$d,R \in \N$,~$p \in [0,1]$,~$k \in \N$,~$\mathsf{v}>0$ and~$\epsilon > 0$ there exists~$r > 0$ such that the following holds. Let~$\mathcal U$ be a graphical construction for dynamical percolation with parameters~$p$ and~$\mathsf v$, and let~$\mathcal T_1, \ldots, \mathcal T_k$ be instruction manuals for random walks with range~$R$. Then, for any~$x_1,\ldots, x_k$ and any~$\zeta, \zeta' \in \Omega_\mathrm{edge}$ which agree in~$\cup^k_{i=1} E_1(x_i,r)$, we have
\begin{equation}\label{ArgumentCouplingRW_eq}
    \Prob\big( \mathbb{X}^{\mathcal U, \mathcal T_i}_t(\zeta,x_i) = \mathbb{X}^{\mathcal U,\mathcal T_i}_t(\zeta', x_i) \text{ for all } t \geq 0   \text{ and } i \in  \{1,\dots,k\}\big) > 1 - \epsilon.
\end{equation}
\end{lemma}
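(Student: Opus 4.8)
The plan is to couple the two flows by driving them with the \emph{same} graphical construction $\mathcal U$ and the \emph{same} instruction manuals $\mathcal T_i$, so that $\mathbb X^{\mathcal U,\mathcal T_i}_t(\zeta,x_i)$ and $\mathbb X^{\mathcal U,\mathcal T_i}_t(\zeta',x_i)$ start at the common point $x_i$ and evolve identically until the first jump time at which the two evolved environments return different answers to the connectivity query defining the flow. First I would reduce, by a union bound over $i\in\{1,\dots,k\}$, to controlling a single walker: it suffices to produce $r$ making $\Prob(\text{flow }i\text{ started from }\zeta\text{ and from }\zeta'\text{ ever disagree})$ smaller than $\epsilon/k$, uniformly in $x_i$ and in any pair $\zeta,\zeta'$ agreeing on $E_1(x_i,r)$.

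The next step is to pin down exactly which edges can ever produce a disagreement. Since both environments are evolved from the same $\mathcal U$, for any edge $e$ the two configurations agree on $e$ at all times after the first point of $\mathcal U^e$; hence $e$ can influence the flows only while it is \emph{unrefreshed} (that is, while $\mathcal U^e\cap[0,t]=\varnothing$) and only if $\zeta(e)\neq\zeta'(e)$. Writing $W_t:=\mathbb X^{\mathcal U,\mathcal T_i}_t(\zeta,x_i)$ for the common trajectory up to the first disagreement, a disagreement at a jump time $t_n$ with $W_{t_n^-}=w$ forces the existence of an edge $e\in E_1(w,R)$ with $\zeta(e)\neq\zeta'(e)$ and $\mathcal U^e\cap[0,t_n]=\varnothing$. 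Because $\zeta,\zeta'$ agree on $E_1(x_i,r)$, any such $e$ lies outside $E_1(x_i,r)$, so it has an endpoint $u$ with $|u-x_i|_1=:D_e>r$; and since both endpoints of $e$ lie in $B_1(w,R)$, we get $|w-x_i|_1\ge D_e-R$.

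The heart of the argument is a deterministic speed limit combined with an independence observation. Let $\tau_e$ be the first time the walker is within range of $e$; then $e$ can contribute to a disagreement only if $\mathcal U^e\cap[0,\tau_e]=\varnothing$. By the previous step the walker must have travelled $\ell^1$-distance at least $D_e-R$ by time $\tau_e$, and since each jump displaces it by at most $R$, this requires at least $m_e:=\lceil (D_e-R)/R\rceil$ jumps, whence $\tau_e\ge\theta_e$, where $\theta_e$ is the time of the $m_e$-th point of $\mathcal T_i$. Crucially $\theta_e$ is measurable with respect to $\mathcal T_i$ alone, hence independent of $\mathcal U^e$, while $\theta_e\sim\mathrm{Gamma}(m_e,1)$; conditioning on $\mathcal T_i$ gives
\begin{equation*}
\Prob(e\text{ contributes})\le\Prob(\mathcal U^e\cap[0,\theta_e]=\varnothing)=\E\big[e^{-\mathsf v\,\theta_e}\big]=(1+\mathsf v)^{-m_e}.
\end{equation*}
Summing over the candidate edges (those with $D_e>r$) and using that the number of edges with $D_e=D$ grows only polynomially in $D$ while $(1+\mathsf v)^{-(D-R)/R}$ decays geometrically, I obtain
\begin{equation*}
\Prob(\text{flow }i\text{ ever disagrees})\le\sum_{e:\,D_e>r}(1+\mathsf v)^{-m_e}=:h(r),
\end{equation*}
a tail of a convergent series that is independent of $x_i$ by translation invariance and tends to $0$ as $r\to\infty$. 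Choosing $r$ with $h(r)<\epsilon/k$ and applying the union bound over the $k$ walkers finishes the proof.

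The main obstacle is making the speed-limit coupling fully rigorous: one must verify that up to the first disagreement both flows genuinely coincide with $W$ (so the disagreement is detected along a single trajectory), that the pathwise bound $\tau_e\ge\theta_e$ holds, and---most delicately---that the refresh clock $\mathcal U^e$ of the \emph{specific} far edge $e$ can be decoupled from the walker's motion. This last point is precisely what forces the argument to lower-bound $\tau_e$ by the $\mathcal T_i$-measurable quantity $\theta_e$ rather than arguing with $\tau_e$ directly, since the trajectory does depend on $\mathcal U$ through the connectivity checks. Everything else---the geometric counting of edges and the Gamma-transform computation---is routine.
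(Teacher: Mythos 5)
Your proof is correct, and it uses the same coupling as the paper (drive both flows with the same $\mathcal U$ and the same $\mathcal T_i$, and observe that an initial discrepancy at an edge $e$ can only matter while $\mathcal U^e$ has not yet fired and only once the walker has come within range of $e$), but the way you bound the bad event is genuinely different. The paper discretizes over the distance scale $n$, introduces the intermediate time $\sqrt n$, and runs a Borel--Cantelli argument on the two families of events $G_n$ (some edge within distance $n$ unrefreshed by time $\sqrt n$) and $H_n$ (the walkers have attempted enough jumps by time $\sqrt n$ to travel distance $n$); the choice of $r=N$ is then non-constructive beyond summability. You instead do a per-edge union bound: lower-bounding the hitting time of the range of $e$ by the $\mathcal T_i$-measurable time $\theta_e\sim\mathrm{Gamma}(m_e,1)$ of the $m_e$-th attempted jump, exploiting the independence of $\mathcal T_i$ and $\mathcal U^e$ to compute $\Prob(\mathcal U^e\cap[0,\theta_e]=\varnothing)=(1+\mathsf v)^{-m_e}$ exactly, and summing a polynomial-times-geometric series. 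Your version buys an explicit, translation-invariant rate $h(r)\le C\sum_{D>r}D^{d-1}(1+\mathsf v)^{-(D-R)/R}$ and avoids the choice of an auxiliary time scale; the paper's version is slightly shorter to state but delivers no rate. You also correctly identify, and resolve, the one delicate point: the actual hitting time $\tau_e$ depends on $\mathcal U$ through the connectivity queries, so the decoupling must go through the purely $\mathcal T_i$-measurable surrogate $\theta_e$, which is legitimate because the first-disagreement structure guarantees that both flows follow the common trajectory (with its $R$-per-jump speed limit) up to the time the contributing edge is first queried.
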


\begin{proof}
For every $n \geq 1$, we define the events
$$G_n = \{ \exists e \in E(\Z^d): \mathrm{dist}(\{x_1,\dots,x_k\},\{e\}) \leq n \text{ and } \mathcal{U}^e \cap [0, \sqrt{n} ] = \varnothing \}.$$
and
$$H_n = \{ R \cdot |\{\text{attempted steps by the random walks by time } \sqrt{n}\}| \geq n \}.$$
It is easy to see that 
$$\sum_n \Prob(G_n)< \infty \hspace{3mm} \text{ and } \hspace{3mm} \sum_n \Prob(H_n)< \infty, $$
so there exists $N$ such that
$$\Prob\big[ \big(\cap_{n \geq N} G^c_n \big) \cap \big( \cap_{n \geq N} H^c_n \big) \big] \geq 1 - \epsilon.$$
Set $r = N$. The event inside the probability says that the edges outside $\cup^k_{i=1} E_1(x_i,r)$ refresh before the random walks reach them. This implies that the trajectory of the random walks remains the same as long as their initial environments are identical within~$\cup^k_{i=1} E_1(x_i,r)$.
% {\color{red} (You were using a notation~$E_r$ for the union~$\cup^k_{i=1} E_1(x_i,r)$. In some places later in the section, you mentioned~$E_r$ again. I've deleted~$E_r$ and written the union everywhere.)}
\end{proof} 
Given the configurations $\zeta,\xi \in \Omega_\mathrm{edge}$ and $E \in \mathcal{P}_\mathrm{fin}(E(\Z^d))$, we define the configuration~$\xi^{\zeta \to E}$ by
\begin{equation}\label{XiEzeta}
    \xi^{\zeta \to E}(e) = \left\{ \begin{array}{cl}
        \zeta(e) & \text{ if } e \in E   \\
        \xi(e) & \text{ if } e \in E(\Z^d) \setminus E.  
    \end{array} \right.
\end{equation}
% The approach to extend statements from the environment starting at stationarity $\zeta_0$ to any initial edge configuration $\zeta$ is divided into two parts. Firstly, we prove the validity of the statement for $\zeta_0$ and $\zeta_0^{\zeta \to E_r}$. Secondly, we use Lemma \ref{ArgumentCouplingRW} to extend the validity for the environment started from~$\zeta$. {\color{red} (I think this paragraph can safely be deleted.)}

\begin{proof}[Proof of Proposition \ref{CollisionRWd12}] We assume that the process is built from a graphical construction. 
Fix $x,y \in \Z^d$ and $\zeta \in \Omega_\mathrm{edge}$. Due to \eqref{CollisionRWd12Eq1}, for $\zeta_0 \sim \pi$, 
\begin{equation}\label{D12Eq1}
    \Prob \big( \exists t \geq 0: \mathbb{X}_t^{\mathcal{U},\mathcal{T}_1}(\zeta_0,x) = \mathbb{X}_t^{\mathcal{U},\mathcal{T}_2}(\zeta_0,y) \big) = 1.
\end{equation}
For any $E \in \mathcal{P}_\mathrm{fin}( E(\Z^d) )$, we consider $\zeta^{\zeta \to E}_0$ as in \eqref{XiEzeta}, then
\begin{equation}\label{D12xi}
    \begin{aligned}
    \Prob( \exists t \geq 0: \mathbb{X}^{\mathcal{U},\mathcal{T}_1}_t( & \zeta_0^{\zeta \to E},x) = \mathbb{X}^{\mathcal{U},\mathcal{T}_2}_t(\zeta_0^{\zeta \to E},y) \big) \\
    & = \Prob( \exists t \geq 0: \mathbb{X}_t^{\mathcal{U},\mathcal{T}_1}(\zeta_0,x) = \mathbb{X}^{\mathcal{U},\mathcal{T}_2}_t(\zeta_0,x) \mid \zeta_0 \equiv \zeta \text{ on } E \big) \stackrel{\eqref{D12Eq1}}= 1.         
    \end{aligned}
\end{equation}
Given~$\epsilon > 0$, we choose~$r > 0$ as in Lemma \ref{ArgumentCouplingRW}, so that, letting~$E_r=E_1(x,r) \cup E_1(y,r)$, we have
\begin{align*}
    \Prob(  \mathbb{X}^{\mathcal{U},\mathcal{T}_1}_t &  (\zeta,x) \neq \mathbb{X}^{\mathcal{U},\mathcal{T}_2}_t(\zeta,y) \;\forall t \geq 0) \\[.2cm]
    & \stackrel{\eqref{ArgumentCouplingRW_eq}}\leq \Prob \big(  \mathbb{X}^{\mathcal{U},\mathcal{T}_1}_t( \zeta_0^{\zeta \to E_r},x) \neq \mathbb{X}^{\mathcal{U},\mathcal{T}_2}_t(\zeta_0^{\zeta \to E_r},y) \;\forall t \geq 0\big) + \epsilon \stackrel{(\ref{D12xi})} = \epsilon.
\end{align*}
\end{proof}

\subsubsection{Dimension 3 and higher}\label{CouplingRWSection}
In this section and the next, we prove Proposition \ref{Bigtheorem} and Corollary \ref{CoroBigThm}. To accomplish this, we couple independent random walks defined on a single environment with random walks defined on their own individual and independent environments. 

Fix a graphical construction $\mathcal{U}$ for dynamical percolation,  $\zeta \in \Omega_\mathrm{edge}$,~$x_1,\dots,x_k \in \Z^d$, and independent instructions manuals $\mathcal{T}_1,\dots,\mathcal{T}_k$. Recall the definition of the flows~$(\mathbb X_t^{\mathcal U,\mathcal T_i}(\zeta,x_i))_{t \ge 0}$ from the beginning of Section~\ref{RandomWalksonDPSection}; these give independent random walks started at~$x_1,\ldots, x_k$, all sharing the same dynamical percolation environment.

Abbreviating~$\mathbf{x}=(x_1,\ldots,x_k)$ and~$\mathbf T =(\mathcal T_1,\ldots,\mathcal T_k)$, we now define the processes
\[(\mathbb A_t^{\mathcal U,\mathbf T}(\zeta,\mathbf x))_{t \ge 0},\quad (\mathbb B_t^{\mathcal U,\mathbf T}(\zeta,\mathbf x))_{t \ge 0},\]
both taking values in~$\mathcal P_\mathrm{fin}(\Z^d)$. We first explain these processes intuitively. Suppose that, at time 0, we have no knowledge of the environment (which is encoded by~$\zeta_0$). As time progresses, our knowledge of the environment will change:~$\mathbb A_t^{\mathcal U,\mathbf T}(\zeta,\mathbf x)$ will denote the set of edges which we know to be open at time~$t$, and $\mathbb B_t^{\mathcal U,\mathbf T}(\zeta,\mathbf x)$ the set of edges which we know to be closed at time~$t$ (in particular, these two sets will always be disjoint). Whenever an edge updates, we remove it from our knowledge (either from~$\mathbb A_t^{\mathcal U,\mathbf T}(\zeta,\mathbf x)$ or from~$\mathbb B_t^{\mathcal U,\mathbf T}(\zeta,\mathbf x)$). Moreover, whenever a random walk tries to jump (say, from position~$x$ to position~$y$ at time~$t$), we reveal the status of all edges of~$E_1(x,R)$ at time~$t$ to decide if the jump is allowed; we collect the edges that have been revealed as open and closed in~$\mathbb A_t^{\mathcal U,\mathbf T}(\zeta,\mathbf x)$ and~$\mathbb B_t^{\mathcal U,\mathbf T}(\zeta,\mathbf x)$, respectively.

We now turn to the formal definition. To abbreviate, we write here $$(\mathbb{A}_t, \mathbb{B}_t) = (\mathbb{A}_t^{\mathcal U, \mathbf{T}}(\zeta,\mathbf{x}), \mathbb{B}_t^{\mathcal U, \mathbf{T}}(\zeta,\mathbf{x})), \hspace{2mm} \text{ for all } t \ge 0.$$
Set $\mathbb{A}_0 = \mathbb{B}_0 = \varnothing$, and assume that~$w_i = \mathbb{X}_{t^-}(\zeta_0,x_i)$,~$i=1,\dots,k$, $\mathbb{A}_{t-}$ and~$\mathbb B_{t-}$ have been defined, 
\begin{itemize}
    \item if $t \in \cup^k_{i=1}\mathcal{T}_i$, say $t \in \mathcal{T}_j$, then 
     $$\mathbb{A}_t = \mathbb{A}_{t^-} \cup \{\text{open edges of } E_1(w_j,R)\} \text{ and } \mathbb{B}_t =\mathbb{B}_{t^-} \cup \{\text{closed edges of } E_1(w_j,R)\};$$
    \item if $t \in \mathcal{U}^e$, then $\mathbb{A}_t = \mathbb{A}_{t^-} \setminus \{e\}$, $\mathbb{B}_t = \mathbb{B}_{t^-} \setminus \{e\}$;
    \item if $t \notin \big( \cup^k_{i=1} \mathcal{T}_i \big) \cup \big( \cup_{e \in E(\Z^d)}\mathcal{U}^e \big)$, then $\mathbb{A}_t = \mathbb{A}_{t^-}$ and $\mathbb{B}_t = \mathbb{B}_{t^-}$.
\end{itemize}
In case~$k=1$ (when we have only one walker started from~$x$ with instruction manual~$\mathcal T$), we write~$\mathbb A_t^{\mathcal U,\mathcal T}(\zeta,x)$ and~$\mathbb B_t^{\mathcal U,\mathcal T}(\zeta,x)$. Note that, for~$\mathbf T = (\mathcal T_1,\ldots, \mathcal T_k)$ and~$\mathbf x = (x_1,\ldots, x_k)$, we have
\begin{equation*}
    \mathbb A_t^{\mathcal U, \mathbf T}(\zeta, \mathbf x) = \bigcup_{i=1}^{k}\mathbb A_t^{\mathcal U, \mathcal T_i}(\zeta, x_i),\qquad \mathbb B_t^{\mathcal U, \mathbf T}(\zeta, \mathbf x) = \bigcup_{i=1}^{k}\mathbb B_t^{\mathcal U, \mathcal T_i}(\zeta, x_i). 
\end{equation*}
\begin{remark}\label{remark_mc_xab}
    In case $\zeta_0 \sim \pi^\mathrm{edge}_p$, it is easy to see that the process
\begin{equation}\label{Markov_chain_same_environment}
    \big(\mathbb{X}^{\mathcal{U},\mathcal{T}_1}_t(\zeta_0,x_1) ,\;\dots,\;\mathbb{X}^{\mathcal{U},\mathcal{T}_k}_t(\zeta_0,x_k),\; \mathbb{A}^{\mathcal{U},\mathbf{T}}_t(\zeta_0,\mathbf{x}), \;\mathbb{B}^{\mathcal{U},\mathbf{T}}_t(\zeta_0,\mathbf{x}) \big)
\end{equation}
is a Markov chain started from~$(x_1,\dots,x_k,\varnothing,\varnothing)$. 
\end{remark}
We construct the mentioned coupling for two independent random walks. From the same reasoning of Remark~\ref{remark_mc_xab}, we see that if $\zeta_0 \sim \pi^\mathrm{edge}_p$, the process
$$ \big( \mathbb{X}^{\mathcal{U},\mathcal{T}_1}_t(\zeta_0,x_1) ,\; \mathbb{A}^{\mathcal{U},\mathcal{T}_1}_t(\zeta_0,x_1), \;\mathbb{B}^{\mathcal{U},\mathcal{T}_1}_t(\zeta_0,x_1), \; \mathbb{X}^{\mathcal{U},\mathcal{T}_2}_t(\zeta_0,x_2) ,\; \mathbb{A}^{\mathcal{U},\mathcal{T}_2}_t(\zeta_0,x_2), \;\mathbb{B}^{\mathcal{U},\mathcal{T}_2}_t(\zeta_0,x_2) )_{t \ge 0} $$
is a Markov chain. We denote by $r_\mathrm{single}$ its jump rate function. Let $(X^i_t,A^i_t, B^i_t)_{t \ge 0}$, $i=1,2$, be independent copies of \eqref{Markov_chain_same_environment} with $k=1$. We denote by $r_\mathrm{separate}$ the jump rate function of the Markov chain
$$(X_t, A^1_t, B^1_t, Y_t, A^2_t, B^2_t)_{t \ge 0}.$$
To state the main results of this section we introduced the following functions. For any~$\ell \in (0, \infty)$ and~$x,y \in \Z^d$, we write
\begin{equation}\label{g_function_x,y}
    \begin{aligned}
        & g_\ell(x,y) := \E \Big[ \int^\infty_0 \mathds{1}\{ \min \big( |X_t - Y_t|_1, \hspace{0.5mm} \mathrm{dist}(X_t,A^2_t \cup B^2_t), \hspace{0.5mm} \mathrm{dist}(Y_t,A^1_t \cup B^1_t) \big) \leq \ell \}  \; \mathrm{d}t \Big] \\[.2cm]
        & f_\ell(x,y) := \Prob \big( \exists t \geq 0:  \min \big( |X_t - Y_t|_1, \hspace{0.5mm} \mathrm{dist}(X_t,A^2_t \cup B^2_t), \hspace{0.5mm} \mathrm{dist}(Y_t,A^1_t \cup B^1_t) \big) \leq \ell \big).
    \end{aligned}    
\end{equation}
\begin{lemma}\label{couple_single_separate}
    For any $d,R \in \N$, $p \in [0,1]$, $\mathsf{v}>0$ and~$x,y \in \Z^d$, there exists a coupling~$(\mathbf{X}_t, \mathbf{Y}_t)_{t \ge 0}$ under $\widehat{\Prob}$ such that~$(\mathbf{X}_t)$ and~$(\mathbf{Y}_t)$ are continuous-time Markov chains with jump rate function~$r_\mathrm{sinlge}$ and~$r_\mathrm{separate}$, respectively, started both from~$(x,\varnothing,\varnothing,y,\varnothing,\varnothing)$ and
    $$ \widehat{\Prob}( \exists t \ge 0: \mathbf{X}_t \neq \mathbf{Y}_t ) \leq 2g_{2R}(x,y).$$
\end{lemma}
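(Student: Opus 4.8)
The plan is to build the coupling by driving both chains with a common pair of instruction manuals~$\mathcal T_1,\mathcal T_2$ and by coupling the revealed edge-states so that the two chains agree for as long as possible. Recall that~$\mathbf X$ runs two walkers on a \emph{single} environment while~$\mathbf Y$ runs them on two \emph{independent} environments. I would couple the edge-states so that, for each~$i$, the values read by walker~$i$ (and the refresh times it experiences) in~$\mathbf X$ match those read by walker~$i$ in~$\mathbf Y$; this is consistent precisely as long as the two walkers never query a common currently-known edge. The only mechanism that can force the chains apart is such a \emph{re-query}: an edge recorded in one walker's knowledge set~$A^j\cup B^j$ is revealed by the other walker before it refreshes. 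In the single environment of~$\mathbf X$ this second reveal is pinned to the previously recorded value, whereas in the separate environment of~$\mathbf Y$ it is an independent fresh Bernoulli($p$) sample; this is exactly where the two chains can disagree, and I would declare the coupling broken at the first re-query time~$\tau_\mathrm{break}$.

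Next I would argue that as long as no re-query occurs the two chains are identical, so
$\{\exists t:\mathbf X_t\neq\mathbf Y_t\}$ is contained in the event that a re-query ever happens; moreover, up to~$\tau_\mathrm{break}$ the trajectory of~$\mathbf X$ coincides with that of~$\mathbf Y$, so the probability of a re-query can be computed along the separate-environment chain~$\mathbf Y$, for which~$g_{2R}$ in~\eqref{g_function_x,y} is defined. Since each reveal by walker~$i$ only touches edges of~$E_1(\cdot,R)$, i.e.\ edges within~$\ell^1$-distance~$R$ of its current position, a re-query of~$A^2_t\cup B^2_t$ by walker~$1$ can occur only when~$\mathrm{dist}(X_t,A^2_t\cup B^2_t)\le 2R$, and symmetrically for walker~$2$; the collision term~$|X_t-Y_t|_1\le 2R$ absorbs the remaining case in which the two range-$R$ reveals overlap directly. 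Each of these three events is dominated by the indicator~$\mathds 1\{\min(\cdots)\le 2R\}$ appearing in the definition of~$g_{2R}$.

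Finally I would bound~$\widehat{\Prob}(\tau_\mathrm{break}<\infty)$ by the compensator (first-moment) argument used in the martingale construction leading to~\eqref{coup_upper_bound}. Each walker attempts jumps along a rate-$1$ Poisson process, so the instantaneous rate of a re-query at time~$t$ is at most
$\mathds 1\{\mathrm{dist}(X_t,A^2_t\cup B^2_t)\le 2R\}+\mathds 1\{\mathrm{dist}(Y_t,A^1_t\cup B^1_t)\le 2R\}\le 2\,\mathds 1\{\min(\cdots)\le 2R\}$.
Writing the re-query counting process minus its compensator as a martingale, stopping at~$\tau_\mathrm{break}$, and taking expectations gives
$$\widehat{\Prob}(\exists t\ge 0:\mathbf X_t\neq\mathbf Y_t)\le \widehat{\Prob}(\tau_\mathrm{break}<\infty)\le 2\,\E\Big[\int_0^\infty \mathds 1\{\min(\cdots)\le 2R\}\,\mathrm dt\Big]=2\,g_{2R}(x,y),$$
the factor~$2$ coming from the two walkers and the integral being extended to~$\infty$ along the (unstopped) chain~$\mathbf Y$.

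The main obstacle I anticipate is the bookkeeping in the first step: setting up a single probability space that carries both environments and verifying that the coupled reveals keep~$\mathbf X_t=\mathbf Y_t$ until the first re-query. In particular one must check that refreshes, which remove edges from the knowledge sets and thereby ``reset'' whether an edge is common, are handled consistently in both environments, and that the compensator argument is legitimate for this~$\mathcal P_\mathrm{fin}(\Z^d)$-valued jump process, whose jump rates are bounded uniformly thanks to the finite range~$R$.
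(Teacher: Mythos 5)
Your proposal is correct and follows essentially the same route as the paper: couple the two chains to evolve identically until the first event at which their transition rates can disagree (your ``re-query''), and bound the probability of that event by the compensator/first-moment argument, with the instantaneous rate of the bad event dominated by $2\cdot\mathds{1}\{\min(|X_t-Y_t|_1,\mathrm{dist}(X_t,A^2_t\cup B^2_t),\mathrm{dist}(Y_t,A^1_t\cup B^1_t))\le 2R\}$, which integrates to $2g_{2R}(x,y)$. The only presentational difference is that the paper packages the coupling into the abstract Lemma~\ref{lemma_pre_main} and breaks it at the first entry into the set $B$ where the rates differ, rather than building it from graphical constructions and breaking at the first actual re-query; the mechanism and the resulting bound are the same.
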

As a simple corollary, we have 
\begin{lemma}\label{lemma_main}
For any $d,R \in \N$, $p \in [0,1]$, $\mathsf{v}>0$ and~$x,y \in \Z^d$ with~$|x-y|_1 > 2R$ the following holds. Write
\begin{equation}\label{abbr_not_X1_X2_same_environment}
    \begin{aligned}
    & X^1_t = \mathbb{X}^{\mathcal U, \mathcal T_1}(\zeta_0,x), \hspace{2mm} F^1_t = \mathbb{A}^{\mathcal U, \mathcal T_1}(\zeta_0,x) \cup \mathbb{B}^{\mathcal U, \mathcal T_1}(\zeta_0,x); \\[0.1cm]
    & X^2_t = \mathbb{X}^{\mathcal U, \mathcal T_2}(\zeta_0,y), \hspace{2mm} F^2_t = \mathbb{A}^{\mathcal U, \mathcal T_2}_t(\zeta_0,y) \cup \mathbb{B}^{\mathcal U, \mathcal T_2}_t(\zeta_0,y),\hspace{2mm} t \ge 0,
\end{aligned}    
\end{equation}
Then,
\begin{equation}
    \Prob(  \exists t\ge 0: \min \big( |X^1_t - X^2_t|_1, \hspace{0.5mm} \mathrm{dist}(X_t,F^2_t), \hspace{0.5mm} \mathrm{dist}(Y_t,F^1_t) \big) \leq \ell ) \leq f_\ell(x,y) + 2g_{2R}(x,y).
\end{equation}
\end{lemma}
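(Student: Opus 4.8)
The plan is to read the result off directly from the coupling of Lemma~\ref{couple_single_separate}, using the observation that both sides of the claimed inequality are probabilities of the \emph{same} deterministic functional applied to two processes that the coupling keeps equal with high probability. First I would record a single ``closeness functional''. For a trajectory $\omega = (u^1_t, G^1_t, u^2_t, G^2_t)_{t \ge 0}$ consisting of two positions together with two revealed-edge sets, set
$$\Psi(\omega) = \mathds{1}\big\{\exists t \ge 0:\; \min\big(|u^1_t - u^2_t|_1,\; \mathrm{dist}(u^1_t, G^2_t),\; \mathrm{dist}(u^2_t, G^1_t)\big) \le \ell\big\}.$$
The point of this definition is that $\Psi$ is the same measurable function of the trajectory in both of the comparisons we need.

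With this notation, the left-hand side of the lemma is $\Prob(\Psi(X^1,F^1,X^2,F^2)=1)$, where $(X^1,F^1,X^2,F^2)$ is the single-environment process of~\eqref{abbr_not_X1_X2_same_environment} with $\zeta_0 \sim \pi^\mathrm{edge}_p$; by Remark~\ref{remark_mc_xab} this is a Markov chain with jump rates $r_\mathrm{single}$ started from $(x,\varnothing,\varnothing,y,\varnothing,\varnothing)$. On the other hand, comparing with the definition in~\eqref{g_function_x,y}, one has $f_\ell(x,y) = \Prob(\Psi(X, A^1\cup B^1, Y, A^2\cup B^2)=1)$, where the process inside is the separate-environment chain with rates $r_\mathrm{separate}$, started from the same initial state. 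Thus I would identify the left-hand side with $\Psi$ evaluated on the single-environment chain, and $f_\ell(x,y)$ with $\Psi$ evaluated on the separate-environment chain.

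Next I would invoke the coupling $(\mathbf X_t, \mathbf Y_t)_{t \ge 0}$ of Lemma~\ref{couple_single_separate}, under which $(\mathbf X_t)$ has rates $r_\mathrm{single}$, $(\mathbf Y_t)$ has rates $r_\mathrm{separate}$, both are started from $(x,\varnothing,\varnothing,y,\varnothing,\varnothing)$, and $\widehat{\Prob}(\exists t: \mathbf X_t \neq \mathbf Y_t) \le 2g_{2R}(x,y)$ (this is where the hypothesis $|x-y|_1 > 2R$ is consumed). Since $\Psi$ depends only on the trajectory, on the coupling-success event $\{\mathbf X_t = \mathbf Y_t \text{ for all } t\}$ we have $\Psi(\mathbf X) = \Psi(\mathbf Y)$, whence
$$\{\Psi(\mathbf X)=1\} \subseteq \{\Psi(\mathbf Y)=1\} \cup \{\exists t:\mathbf X_t \neq \mathbf Y_t\}.$$
Taking $\widehat{\Prob}$ and using the matching of laws from the previous paragraph gives
$$\Prob(\Psi(X^1,F^1,X^2,F^2)=1) = \widehat{\Prob}(\Psi(\mathbf X)=1) \le \widehat{\Prob}(\Psi(\mathbf Y)=1) + 2g_{2R}(x,y) = f_\ell(x,y) + 2g_{2R}(x,y),$$
which is exactly the assertion.

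The only points requiring care are bookkeeping rather than genuine obstacles: verifying that $\Psi$ applied to $\mathbf X$ reproduces precisely the min-expression in the statement (so that the $\widehat\Prob$-marginal of $\mathbf X$ coincides with the $\Prob$-law of $(X^1,F^1,X^2,F^2)$), and confirming that the hypothesis $|x-y|_1 > 2R$ is what allows Lemma~\ref{couple_single_separate} to be invoked. Once these identifications are in place, the inclusion of events yields the bound immediately, with no analytic estimate left to perform; this is why the statement is labelled a simple corollary of Lemma~\ref{couple_single_separate}.
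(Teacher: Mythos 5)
Your argument is correct and is precisely the intended one: the paper states Lemma~\ref{lemma_main} as ``a simple corollary'' of Lemma~\ref{couple_single_separate} without writing out the details, and your event inclusion $\{\Psi(\mathbf X)=1\} \subseteq \{\Psi(\mathbf Y)=1\} \cup \{\exists t:\mathbf X_t \neq \mathbf Y_t\}$ combined with the identification of the two marginals is exactly how the bound $f_\ell(x,y) + 2g_{2R}(x,y)$ is meant to arise. The only nitpick is that Lemma~\ref{couple_single_separate} as stated does not actually require $|x-y|_1 > 2R$, so that hypothesis is not ``consumed'' there; it merely guarantees the initial state lies outside the region where the two rate functions differ, and the argument goes through either way.
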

We emphasize that, although random walks in \emph{separate environments} are used in the definitions of the functions~$g$ and~$f_\ell$, we then ``forget'' this and regard these functions as ``black boxes'' in the above lemma, which is a statement about random walks \emph{in a single environment}. In Section~\ref{SectionRegenrationTimes}, we well see that $f_\ell$ and $g_\ell$ converge to~0 as~$|x-y|_1 \to \infty$ for any $\ell$ via \textbf{regeneration times} (Lemma \ref{lemma_limit_fl_g}). To prove Lemma \ref{lemma_main} we use the following lemma.

To prove Lemma~\ref{couple_single_separate} we use the following results concerning couplings of Markov chains.
\begin{lemma}\label{lemma_pre_main}
    Let $S$ be a countable set and $B \subseteq S$. Let $r_1$ and $r_2$ be bounded rates for continuous-time Markov chains on~$S$. Assume that $r_1(s,s') = r_2(s,s')$ for all $s \in S \setminus B$ and $s' \in S$. Then, there exists a coupling $(W_t,Z_t)_{t \geq 0}$ under $\textbf{P}$ such that $(W_t)$ and $(Z_t)$ are continuous-time Markov chains started both from $z \in S$ with rates $r_1$ and $r_2$, respectively, and
    $$ \textbf{P}( \exists t \geq 0 : W_t \neq Z_t ) \leq \textbf{E}\Big[ \int^\infty_0 r_2(Z_s,B) \Big] \;\mathrm{d}s, $$
    where $r_2(Z_s,B) = \sum_{m \in B}r_2(Z_s,m)$.
\end{lemma}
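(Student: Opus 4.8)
The plan is to construct a coupling in which the two chains follow \emph{identical} trajectories until the first time the common state reaches $B$, and which lets them run independently thereafter; the stated bound then comes from a compensator (martingale) computation of the probability that the common chain ever reaches $B$.

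Concretely, I would build $(W_t,Z_t)$ on $S\times S$ together with a decoupling flag, as in the ``Coupling of Markov chains'' paragraph above. Set $W_0=Z_0=z$ and let $\tau_B=\inf\{t\ge 0: W_t\in B\}$ be the first time the common state enters $B$ (we may assume $z\notin B$, which is the case in every application of the lemma). For $t<\tau_B$ the common state lies in $S\setminus B$, where $r_1$ and $r_2$ agree by hypothesis; hence both coordinates may perform the \emph{same} jumps, driven by the common rate $r_1(\cdot,\cdot)=r_2(\cdot,\cdot)$, so that $W_t=Z_t$ on $[0,\tau_B)$ and also at the entrance time $\tau_B$ (the jump at $\tau_B$ is from a state of $S\setminus B$, so its destination law is common, and both coordinates land at the same state of $B$). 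From $\tau_B$ onward we let $(W_t)$ and $(Z_t)$ evolve independently with rates $r_1$ and $r_2$, respectively. The $W$-marginal is then a chain with rates $r_1$ and the $Z$-marginal a chain with rates $r_2$, both started at $z$: before $\tau_B$ the common rates coincide with each of $r_1,r_2$, and after $\tau_B$ each coordinate uses its own rates.

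Since the two coordinates agree on all of $[0,\tau_B]$, we have $\{\exists t\ge0: W_t\neq Z_t\}\subseteq\{\tau_B<\infty\}$, so it suffices to bound $\mathbf P(\tau_B<\infty)$. For $t<\tau_B$ the common state $Z_t\in S\setminus B$, where the instantaneous rate of jumping into $B$ equals $\sum_{m\in B}r_2(Z_t,m)=r_2(Z_t,B)$ (again using $r_1=r_2$ off $B$). Consequently $\mathds 1\{\tau_B\le t\}-\int_0^{t\wedge\tau_B}r_2(Z_s,B)\,\mathrm ds$ is a martingale; letting $t\to\infty$ by the same truncation and monotone-convergence argument used in the construction of the coupling above (around~\eqref{coup_upper_bound}) gives
\[
\mathbf P(\tau_B<\infty)=\mathbf E\Big[\int_0^{\tau_B}r_2(Z_s,B)\,\mathrm ds\Big]\le \mathbf E\Big[\int_0^{\infty}r_2(Z_s,B)\,\mathrm ds\Big],
\]
the last inequality holding because the integrand is nonnegative and $(Z_s)$ remains a bona fide $r_2$-chain after $\tau_B$. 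Combining this with the inclusion above yields the claim.

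The main obstacle I anticipate is the bookkeeping required to turn ``identical trajectories until $\tau_B$, independent afterwards'' into a genuine càdlàg Markovian coupling with the correct marginals: one must verify that splitting the dynamics exactly at the stopping time $\tau_B$ disturbs neither marginal law. This is precisely where the hypothesis $r_1=r_2$ on $S\setminus B$ is essential, since it guarantees that the pre-$\tau_B$ evolution is simultaneously a valid $r_1$- and $r_2$-evolution. A secondary technical point is justifying the martingale identity for the (possibly non-integrable) compensator, which is handled by the same stopping/limiting argument already invoked for~\eqref{coup_upper_bound}.
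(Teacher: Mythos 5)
Your proposal is correct and follows essentially the same route as the paper: the identical coupling (run the two chains together until the common state first enters $B$, then let them evolve independently), the same compensator martingale $\mathds{1}\{\tau_B\le t\}-\int_0^{t\wedge\tau_B}r_2(Z_s,B)\,\mathrm{d}s$, and the same passage to the limit to bound $\mathbf P(\tau_B<\infty)$ by $\mathbf E\bigl[\int_0^\infty r_2(Z_s,B)\,\mathrm{d}s\bigr]$. The only cosmetic difference is that you define $\tau_B$ via the $W$-coordinate while the paper uses the $Z$-coordinate, which is immaterial since the coordinates coincide up to that time.
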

\begin{proof}
In a probability space $\mathbf{P}$, we define the Markov chains $W = (W_t)_{t \ge 0}$ and $Z = (Z_t)_{t \ge 0}$ that are coupled so that 
\begin{itemize}
    \item $W$ and $Z$ have jumping rates $r_1$ and $r_2$, respectively, and $W_0 = Z_0 = z$;    
    \item let $\tau_B = \inf\{ s \ge 0: Z_s \in B \}$,
    \begin{enumerate}
        \item for $t \leq \tau_B$, $W$ and $Z$ move together;
        \item for $t > \tau_B$, $W$ and $Z$ move independently.
    \end{enumerate}
\end{itemize}
Since these Markov chains agree until the first time $Z$ hits the set $B$, we see that the process
$$M_t = \mathds{1}\{ \tau_B \leq t \} - \int^{t }_0 r_2(Z_s,B) \cdot \mathds{1} \{ s < \tau_B\} \;\mathrm{d}s, \hspace{3mm} t \geq 0, $$
is a martingale. Then, for any $t \ge 0$,
$$ 0 = M_0 = \textbf{E}[M_t] = \textbf{P}(\tau_B \leq t) - \textbf{E}\Big[ \int^t_0 r_2(Z_s,B) \cdot \mathds{1} \{ s < \tau_B\} \mathrm{d}s \Big].$$
Hence,
$$ \textbf{P}( \exists t\ge 0: W_t \neq Z_t ) \leq \textbf{P}( \tau_B < \infty ) \leq \textbf{E} \Big[ \int^\infty_0 r_2(Z_s,B) \mathrm{d}s \Big].$$
\end{proof}
\begin{proof}[Proof of Lemma \ref{couple_single_separate}]
Fix $x,y \in \Z^d$. We set 
\begin{align*}
    &B = \{ (u,O_1,C_1,v,O_2,C_2) \in S:\; \min(|u-v|_1,\;\mathrm{dist}(u,O_2 \cup C_2),\; \mathrm{dist}(v,O_1 \cup C_1)) \leq R \},\\[.2cm]
        &B' = \{ (u,O_1,C_1,v,O_2,C_2) \in S:\; \min(|u-v|_1,\;\mathrm{dist}(u,O_2 \cup C_2),\; \mathrm{dist}(v,O_1 \cup C_1)) \leq 2R \}.
\end{align*}
It is easy to see that $r_\mathrm{single}(s,s') = r_\mathrm{separate}(s,s')$ for any $s \notin B$ and any $s'$. By Lemma~\ref{lemma_pre_main}, there exists a coupling $(\textbf{X}_t,\textbf{Y}_t)_{t \geq 0}$ under~$\widehat{\Prob}$ of chains~$(\mathbf{X}_t)$ and~$(\mathbf{Y}_t)$ with rates~$r_\mathrm{single}$ and~$r_\mathrm{separate}$, respectively, such that~$\mathbf{X}_0 = \mathbf{Y}_0 = (x,\varnothing,\varnothing,y, \varnothing, \varnothing)$ and
\begin{equation}\label{coupling_proof_1}
    \widehat{\Prob}( \exists t \geq 0: \mathbf{X}_t \neq \mathbf{Y}_t ) \leq \widehat{\E} \Big[ \int^\infty_0 r_2(\mathbf{Y}_s,B)  \mathrm{d}s \Big].
\end{equation}
We write $\mathbf{Y}_t = (U_t,O^1_t,C^1_t,V_t,O^2_t,C^2_t)$ for all $t \geq 0$. By inspecting the jumping rates, we can check that
\begin{equation}\label{sum_rate_coupling}
    r_2(\mathbf{Y}_t,B) \le 2\cdot \mathds{1} \big\{\mathbf{Y}_t \in B'\} = 2 \cdot \mathds{1} \{ \min \big( | U_t -  V_t|_1, \hspace{0.5mm}  \mathrm{dist}(U_t,O^2_t \cup C^2_t), \hspace{0.5mm} \mathrm{dist}(V_t, O^1_t \cup C^1_t) \big) \leq 2R \big\}.
\end{equation} 
\end{proof}

\subsubsection{Regeneration times}\label{SectionRegenrationTimes}
Regeneration times are extensively used in the context of random walks in random environments. This technique was initially introduced in the context of dynamical percolation in \cite{PeresStaufferSteif2015}. We will define regeneration times for two independent random walks, each running on its own dynamical percolation environment (these two environments also being independent).

We recall some notation introduced earlier. Let $\mathcal{U}_1$ and $\mathcal{U}_2$ be graphical constructions for dynamical percolation $(\zeta^1_t)_{t \ge 0}$ and $(\zeta^2_t)_{t \ge 0}$ with density~$p \in (0,1]$ and speed~$\mathsf{v}>0$ on~$\Z^d$, assume that both are started from stationarity~$\zeta^1_0, \zeta^2_0$, and let~$\mathcal{T}_1$ and~$\mathcal{T}_2$ be independent instruction manuals for random walks with range $R$ on $\Z^d$. Given $x,y \in \Z^d$, it is easy to see that
\begin{equation}\label{MC_separate_RW}
    \big(\mathbb{X}^{\mathcal U_1, \mathcal T_1}_t(\zeta^1_0,x),\; \mathbb{A}^{\mathcal U_1, \mathcal T_1}_t(\zeta^1_0,x), \; \mathbb{B}^{\mathcal U_1, \mathcal T_1}_t(\zeta^1_0,x), \; \mathbb{X}^{\mathcal U_2, \mathcal T_2}_t(\zeta^2_0,y),\; \mathbb{A}^{\mathcal U_2, \mathcal T_2}_t(\zeta^2_0,y), \; \mathbb{B}^{\mathcal U_2, \mathcal T_2}_t(\zeta^2_0,y) \big)_{t \ge 0}
\end{equation}
is a Markov chain with jump rate function $r_\mathrm{separate}$. See the definition below Remark~\ref{remark_mc_xab}. We write
\begin{equation}\label{eq_notation_as_above}
\begin{aligned}
    & (X_t,E^1_t) = \big(\mathbb{X}^{\mathcal U_1, \mathcal T_1}_t(\zeta^1_0,x),\; \mathbb{A}^{\mathcal U_1, \mathcal T_1}_t(\zeta^1_0,x) \cup \mathbb{B}^{\mathcal U_1, \mathcal T_1}_t(\zeta^1_0,x) \big), \\[0.1cm]
    & (Y_t,E^2_t) = \big(\mathbb{X}^{\mathcal U_2, \mathcal T_2}_t(\zeta^2_0,y),\; \mathbb{A}^{\mathcal U_2, \mathcal T_2}_t(\zeta^2_0,y) \cup \mathbb{B}^{\mathcal U_2, \mathcal T_2}_t(\zeta^2_0,y) \big), \\[0.1cm]
    & \F_t = \sigma \big( \mathcal{U}_1 \cap [0,t], \mathcal{U}_2 \cap [0,t], \mathcal{T}_1 \cap [0,t],\mathcal{T}_2 \cap [0,t] \big), \hspace{3mm} t \ge 0.
\end{aligned}
\end{equation}
We define the stopping times $\{\sigma_n\}_{n \in \N_0}$ by setting $\sigma_0 = 0$ and for~$n \geq 1$,
\begin{equation}\label{reg_times_def}
    \sigma_n = \min \{ m \in \N: m > \sigma_{n-1} \text{ and } E^1_m \cup E^2_m = \varnothing \}.
\end{equation}
For any $m \in \N$, let $J^1_m$ (resp. $J^2_m)$ denote the number of attempted steps (that is, arrivals in the instruction manual) by $X_t$ (resp. $Y_t$) during $[m-1,m]$,~$i=1,2$. We also write~$S^i_n := \sum^{\sigma_n}_{k= \sigma_{n-1}+1}J^i_m$ and~$S_n := S^1_n + S^2_n$ for any $n \in \N$. For any initial positions~$x,y \in \Z^d$ of the random walks we have the following properties:
\begin{enumerate}
    \item For every $n \in \N$, $(\zeta^1_{\sigma_n},\zeta^2_{\sigma_n})$ has distribution $\pi^\mathrm{edge}_p \otimes \pi^\mathrm{edge}_p$ and is independent of $( X_{\sigma_n},Y_{\sigma_n})$.
    \item The sequence $\{\sigma_{n+1} - \sigma_n\}_{n \geq 0}$ is i.i.d. Moreover, there exists~$a>0$ and~$\theta>0$ such that for every~$n \in \N$,
    \begin{equation}\label{ExpMomTaukBar1}
    \E \big[ \exp \big(\theta \cdot (\sigma_n - \sigma_{n-1}) \big) \mid \F_{\sigma_{n-1}} \big] \leq a \text{ a.s.}    
    \end{equation}
    and consequently,
    \begin{equation}\label{ExpMomTaukBar2}\E \big[ \exp \big(\theta \cdot \sigma_n \big) \big] \leq a^n.\end{equation}
    \item $\big( X_{\sigma_n},Y_{\sigma_n} \big)_{n \geq 0}$ is a random walk in $\Z^d\times \Z^d$ whose increments have the same distribution as~$\big(X_{\sigma_1}-x,Y_{\sigma_1}-y \big)$. 
    Moreover, there exists $b>0$ and $\beta>0$ such that for every $n \in \N$,
    \begin{equation}\label{ExpMomSkBar1}
    \E \big[ \exp \big(\beta \cdot S_n  \big) \mid \F_{\sigma_{n-1}} \big] \leq b \text{ a.s.}    
    \end{equation}
    and consequently,
\begin{equation}\label{ExpMomSkBar2}   
\E \big[ \exp \big(\beta \cdot (S_1 + \cdots + S_n ) \big) \big] \leq b^n.
\end{equation}
\end{enumerate}
Properties 1, 2 and 3 are proved with standard arguments, which have for instance been employed in Section 3 of \cite{PeresStaufferSteif2015}, requiring only minor modifications.

Now we prove that $f_\ell(x,y)$ and $g_\ell(x,y)$ from~\eqref{g_function_x,y} converge to 0 as $|x-y|_1 \to \infty$. After this, we prove Proposition \ref{Bigtheorem} and Corollary \ref{CoroBigThm}.
\begin{lemma}\label{lemma_limit_fl_g}
Assume $d \ge 3$. The following statements hold for any $\ell \ge 0$:
\begin{align}
    & 1. \displaystyle \lim_{L \to \infty} \sup_{x,y:|x-y|_1 > L} f_\ell(x,y) = 0; \hspace{9cm} \label{f_ell_limit_0}  \\ 
    & 2. \displaystyle \lim_{L \to \infty} \sup_{x,y:|x-y|_1 > L} g_\ell(x,y) = 0. \label{g_ell_limit_0}
\end{align}
\end{lemma}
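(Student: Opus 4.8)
The plan is to exploit the regeneration times $\sigma_n$ of \eqref{reg_times_def} together with the transience of the embedded difference walk. By Property 3, $(X_{\sigma_n}, Y_{\sigma_n})_{n \ge 0}$ is a random walk on $\Z^d \times \Z^d$ with i.i.d.\ increments, so I set $D_n := X_{\sigma_n} - Y_{\sigma_n}$; this is a random walk on $\Z^d$ whose increments have exponential moments (by \eqref{ExpMomSkBar2}, since a single displacement within a block is at most $R S_n$) and which is genuinely $d$-dimensional, because a single range-$R$ walk can move in every lattice direction. For $d \ge 3$ such a walk is transient, and I will invoke the standard Green's-function estimate $G(u,v) \le C(1 + |u-v|_1)^{2-d}$. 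In particular, writing $G_\Sigma(\rho) := \E[\#\{n \ge 1 : |D_{n-1}|_1 \le \rho\}] = \sum_{|z|_1 \le \rho} G(D_0,z)$ with $D_0 = x-y$, one has the uniform bound $G_\Sigma(\rho) \le C\rho^d$ and, whenever $|D_0|_1 > 2\rho$, the decaying bound $G_\Sigma(\rho) \le C\rho^d\,|D_0|_1^{2-d}$.

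The geometric heart of the argument is a containment statement for a single regeneration block. Since $E^1_{\sigma_{n-1}} = E^2_{\sigma_{n-1}} = \varnothing$ by \eqref{reg_times_def}, and since during $[\sigma_{n-1},\sigma_n)$ each walker moves by at most $R$ per attempted step and only reveals edges within $\ell^1$-distance $R$ of its current position, both walkers and all their revealed edges stay within distance $R(S_n+1)$ of $X_{\sigma_{n-1}}$ and $Y_{\sigma_{n-1}}$, respectively. Hence each of the quantities $|X_t-Y_t|_1$, $\mathrm{dist}(X_t,E^2_t)$, $\mathrm{dist}(Y_t,E^1_t)$ stays above $|D_{n-1}|_1 - 2R(S_n+1)$ throughout the block, so, writing $\Delta_t$ for the minimum inside the indicators in \eqref{g_function_x,y},
\[\{\exists t \in [\sigma_{n-1},\sigma_n):\ \Delta_t \le \ell\} \subseteq \{|D_{n-1}|_1 \le 2R(S_n+1)+\ell\}.\]

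With this containment I sum over blocks, the key structural input being that the block data $(\sigma_n-\sigma_{n-1},S_n)$ is independent of $D_{n-1}$ and i.i.d.\ across $n$ with exponential moments \eqref{ExpMomTaukBar1}, \eqref{ExpMomSkBar1}. A union bound and this independence give, with $(\sigma_1,S_1)$ a generic block,
\[f_\ell(x,y) \le \sum_{n\ge1}\Prob(|D_{n-1}|_1 \le 2R(S_n+1)+\ell) = \E\big[G_\Sigma(2R(S_1+1)+\ell)\big],\]
and the identical computation, carrying the extra factor $\sigma_n-\sigma_{n-1}$, yields $g_\ell(x,y) \le \E[\sigma_1\, G_\Sigma(2R(S_1+1)+\ell)]$. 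Abbreviating $\rho = 2R(S_1+1)+\ell$, I split on $\{\rho \le |D_0|_1/2\}$: there the decaying bound contributes at most $C|D_0|_1^{2-d}\,\E[\rho^d] \to 0$ (using $d \ge 3$ and the exponential moments of $S_1$), while its complement forces $S_1 \gtrsim |D_0|_1$, and pairing the uniform bound $G_\Sigma(\rho)\le C\rho^d$ with Cauchy--Schwarz against $\Prob(S_1 \gtrsim |D_0|_1)$ (and against $\sigma_1$ in the case of $g_\ell$) sends this term to $0$ by the exponential tail of $S_1$.

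The main obstacle I anticipate is precisely this large-$S_1$ regime: when a walker attempts many steps in one block, its revealed-edge cloud can be large, so the ball of radius $\rho$ is itself large and its Green's-function mass need not be small; the resolution is to trade the polynomial growth $\rho^d$ of the ball against the exponential decay of $\Prob(S_1 \gtrsim |D_0|_1)$ furnished by \eqref{ExpMomSkBar1}. A secondary point needing care is the verification that $D_n$ is genuinely $d$-dimensional, so that the transient Green's-function estimate applies.
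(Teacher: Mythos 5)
Your proposal is correct, and it follows the same skeleton as the paper's proof --- regeneration times, exponential moments of $\sigma_1$ and $S_1$, and transience of the embedded difference walk $D_n = X_{\sigma_n}-Y_{\sigma_n}$ in $d\ge 3$, keyed to the observation that a near-collision during block $n$ forces either $|D_{n-1}|_1$ to be small or $S_n$ to be large --- but the execution differs in two genuine ways. First, the paper reduces \eqref{f_ell_limit_0} to \eqref{g_ell_limit_0} by showing $g_\ell \ge e^{-2\min(1,\mathsf v)}f_\ell$ (a near-collision persists for a unit of time with uniformly positive probability, so the occupation integral dominates the hitting probability), whereas you dispense with this reduction and control $f_\ell$ and $g_\ell$ by one and the same union bound over regeneration blocks. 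Second, the paper quantifies transience via the LCLT upper bound $\Prob(X_{\sigma_n}-Y_{\sigma_n}=z)\le cn^{-d/2}$, an $n^{1/8}$ threshold giving uniform summability of the tail of the block series, and then a separate truncation argument showing the head of the series vanishes as $|x-y|_1\to\infty$ because a near-collision before $\sigma_N$ forces $S_1+\cdots+S_N\gtrsim|x-y|_1$; you instead sum the Green's function of $D_n$ over a ball of random radius $\rho=2R(S_1+1)+\ell$, splitting on $\{\rho\le|x-y|_1/2\}$. Your route yields an explicit rate of order $|x-y|_1^{2-d}$ in one pass and is arguably cleaner, at the price of invoking the pointwise decay $G(0,z)\lesssim(1+|z|)^{2-d}$, a slightly stronger (but still standard) input than the $n^{-d/2}$ bound the paper takes from \cite{Lawler2010}; both arguments equally require the paper's assertion that the increment law $U$ of \eqref{increments_rw_U} generates a genuinely $d$-dimensional aperiodic irreducible walk, a point you correctly flag. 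Your geometric containment of each block within radius $R(S_n+1)$ of the block's starting positions (valid because $E^1_{\sigma_{n-1}}\cup E^2_{\sigma_{n-1}}=\varnothing$ by \eqref{reg_times_def}) and your use of independence of the block data from $\F_{\sigma_{n-1}}$ are both consistent with Properties 1--3 as stated in the paper, so I see no gap.
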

% \begin{enumerate}
%     \item $\displaystyle \lim_{L \to \infty} \sup_{x,y:|x-y|_1 > L} f_\ell(x,y) = 0$;
%     \item $\displaystyle \lim_{L \to \infty} \sup_{x,y:|x-y|_1 > L} g_\ell(x,y) = 0$.
% \end{enumerate}

%\E \Big[ \int^\infty_0 \mathds{1}_{ \{\mathrm{dist}(X^1_t,E^2_t \cup F^2_t) \}  } + \mathds{1}_{ \{\mathrm{dist}(X^2_t,E^1_t \cup F^1_t) \}  } \mathrm{d}t \Big]
\begin{proof}
1. Fix $\ell \ge 0$. Recall notation~\eqref{eq_notation_as_above}. Let
$$ I = \{t \ge 0:\min\big( |X_t-Y_t|_1, \hspace{0.5mm} \mathrm{dist}(X_t,E^2_t), \hspace{0.5mm} \mathrm{dist}(Y_t,E^1_t) \big) \leq \ell \} \; \text{ and } \; \tau = \inf{I}.$$
Let $m$ be the Lebesgue measure on $(0,\infty)$. We have that
\begin{equation}\label{g_ineq_lower_tau_infty}
    \begin{aligned}
    g_\ell(u,v) & \geq \E[ m(I) ] \geq \E[ m(I) \cdot \mathds{1}\{[\tau, \tau +1] \subseteq I \} ] \\[.2cm]
    & \geq \E[ \mathds{1}\{\tau < \infty\} \cdot \mathds{1}\{[\tau, \tau +1] \subseteq I \}] = \E[ \mathds{1}\{\tau < \infty\} \cdot \Prob([\tau, \tau +1] \subseteq I \mid \mathcal{F}_\tau) ] \\[.2cm]
    & \stackrel{(\ast)} \geq \Prob( \tau < \infty ) \exp(-2 \cdot \min(1,\mathsf{v})) = f_\ell(x,y)\exp(-2 \cdot \min(1,\mathsf{v})),
\end{aligned}
\end{equation}
where $(\ast)$ follows by observing that at time $\tau$, some vertex or edge from $X_t$ or $E^1_t$ is close to some vertex or edge from $Y_t$ or $E^2_t$; both of these encounter no update within one-time unit of $\tau$ with probability at least $\exp(-2 \cdot \min(1,\mathsf{v}))$. Hence, from~\eqref{g_ineq_lower_tau_infty}, we see that~\eqref{f_ell_limit_0} follows from~\eqref{g_ell_limit_0}. \\

\noindent 2. We have 
\begin{equation}\label{eq_3_limit}
    \begin{aligned}
        \E \Big[ \int^\infty_0 & \mathds{1}\{ \mathrm{dist}(X_t,E^2_t) \leq  2R \} \;\mathrm{d}t \Big]
        = \sum^\infty_{n = 0} \E \Big[ \int^{\sigma_{n+1}}_{\sigma_n} \mathds{1}\{ \mathrm{dist}(X_t,E^2_t) \leq 2R \} \;\mathrm{d}t \Big]  \\
        & \leq \sum^\infty_{n=0} \E \Big[ \int^{\sigma_{n+1}}_{\sigma_n} \mathds{1}\{ \mathrm{dist}(X_t,E^2_t) \leq 2R, |X_{\sigma_n} - Y_{\sigma_n}| > n^{1/8} \} \;\mathrm{d}t\Big]  \\ 
        & \hspace{3mm} + \sum^\infty_{n=0} \E \big[ (\sigma_{n+1}-\sigma_n) \cdot \mathds{1}\{ |X_{\sigma_n}-Y_{\sigma_n}|_1 \leq n^{1/8} \} \big].   
    \end{aligned}
\end{equation}
We want to prove that the sum on the right-hand side of the equality in \eqref{eq_3_limit} is summable. To achieve that, we show that the two sums on the right-hand side of the inequality in~\eqref{eq_3_limit} are finite. 

Note that if $ \mathrm{dist}(X_t,E^2_t) \leq 2R $ in the interval $[\sigma_n, \sigma_{n+1}]$ and $|X_{\sigma_n} - Y_{\sigma_n}| > n^{1/8}$, then we have that~$R \cdot S^1_{n+1} \ge n^{1/8} - 2R$. Hence,
\begin{equation}\label{eq_5_limit}
    \begin{aligned}
        \E \Big[ & \int^{\sigma_{n+1}}_{\sigma_n} \mathds{1} \{ \mathrm{dist}(X_t,E^2_t)  \leq 2R, \hspace{1mm} |X_{\sigma_n} - Y_{\sigma_n}| > n^{1/8} \} \mathrm{d}t \Big] \\ 
        & \leq \E[(\sigma_{n-1}-\sigma_n)\cdot \mathds{1} \{R \cdot S^1_{n+1} \geq n^{1/8}-2R \} ] \stackrel{(\ast)}{\leq} (\E [\sigma^2_1])^{1/2} \cdot \Prob( R\cdot S^1_{n+1} \geq n^{1/8}- 2R )^{1/2} \\
        & \hspace{-2mm} \stackrel{\eqref{ExpMomSkBar1}} \leq (\E [\sigma^2_1])^{1/2} \cdot b^{1/2}\exp \Big( - \frac{\beta(n^{1/8}-2R)}{2R} \Big),
    \end{aligned}
\end{equation}
where $(\ast)$ follows by the Cauchy-Schwarz inequality. Since $\E [\sigma^2_1])<\infty$~\eqref{ExpMomTaukBar2}, we have that this sequence is summable. We turn to the other sum. By the Strong Markov property,
\begin{equation}\label{eq_6_limit}
    \E \big[ (\sigma_{n+1}-\sigma_n) \cdot \mathds{1}\{ |X_{\sigma_n}-Y_{\sigma_n}|_1 \leq n^{1/8} \} \big] \stackrel{\eqref{ExpMomTaukBar1}} \leq \frac{2a}{\theta} \cdot \Prob( |X_{\sigma_n} - Y_{\sigma_n}|_1 \leq n^{1/8} ).
\end{equation}
Note that $(X_{\sigma_n} - Y_{\sigma_n})_{n \geq 0}$ is a random walk whose increments have the same distribution as 
\begin{equation}\label{increments_rw_U}
    U = X_{\sigma_1} - x - (Y_{\sigma_1}-y).
\end{equation}
This distribution generates an aperiodic, symmetric (hence with mean zero), irreducible walk supported on $\Z^d$. Moreover, $U$ has an exponential moment by \eqref{ExpMomSkBar2} and the inequality~$|U|_1 \leq S_1\cdot R$. By a bound derived from the Local Central Limit Theorem (see Proposition~2.4.4 of \cite{Lawler2010}), there is $c < \infty$ such that for all $n \in \N$ and $z \in \Z^d$,~$\Prob( X_{\sigma_n} - Y_{\sigma_n} = z) \leq cn^{-d/2}$. Hence, for any $n \ge 1$
\begin{equation}\label{MeetinginaBallIndependent}
    \begin{aligned}
    \Prob \big(  |X_{\sigma_n} - Y_{\sigma_n}|_1 \leq n^{1/8} \big) & = \hspace{-4mm} \sum_{w \in B_1(n^{1/8})} \hspace{-3mm} \Prob\big( X_{\sigma_n} - Y_{\sigma_n} = w-X_0 + Y_0\big) \leq \frac{c \cdot c'(d)}{m^{d/2 - d/8}}.   
    \end{aligned}
\end{equation}
Since $\tfrac{d}{2} - \tfrac{d}{8} > 1$ for every $d \geq 3$, the sequence \eqref{MeetinginaBallIndependent} is summable. Therefore, the sum on the right-hand side of the equality in \eqref{eq_3_limit} is summable. Given $\epsilon>0$, there exists $N \in \N$ such that
$$ \E \Big[ \int^\infty_0 \mathds{1}\{ \mathrm{dist}(X_t,E^2_t) \leq  2R \} \;\mathrm{d}t \Big]
        \leq  \E \Big[ \int^{\sigma_N}_0 \mathds{1}\{ \mathrm{dist}(X_t,E^2_t) \leq 2R \} \;\mathrm{d}t \Big] + \epsilon/2. $$
Note that
\begin{equation*}
    \E \Big[ \int^{\sigma_{N}}_0 \mathds{1}\{ \mathrm{dist}(X_t,E^2_t) \leq 2R \} \mathrm{d}t \Big]  \leq \E \Big[ \sigma_N \cdot \mathds{1} \Big\{ S_1 + \dots + S_N \geq \frac{|x-y|_1 - 2R}{R} \Big\} \Big]. 
\end{equation*}
Following the same procedure done in~\eqref{eq_5_limit}, we see that the integral on the left-hand side can now be made smaller than $\epsilon/2$ by taking $L$ large enough. We have proved that 
$$\lim_{L \to \infty}\sup_{x,y:|x-y|_1 > L}\E \Big[ \int^\infty_0 \mathds{1} \{ \mathrm{dist}(X_t,E^2_t) \leq 2R \} \mathrm{d}t \Big] = 0.$$ 
An analogous procedure gives
$$\lim_{L \to \infty}\sup_{x,y:|x-y|_1 > L} \Big( \E \Big[ \int^\infty_0 \mathds{1}\{ \mathrm{dist}(Y_t,E^1_t) \leq 2R \} \mathrm{d}t \Big] + \E \Big[ \int^\infty_0 \mathds{1}\{ |X_t - Y_t|_1 \leq 2R \} \Big] \Big) = 0.$$ 
\end{proof}

\begin{proof}[Proof of Proposition \ref{Bigtheorem}]
Fix $\ell \ge 0$. We assume that the process is built from a graphical construction: we let~$\mathcal U$ encode the environment updates, assume that $\zeta_0 \sim \pi^\mathrm{edge}_p$, and~$\mathcal T_1$ and~$\mathcal T_2$ be the instruction manuals for the walkers started at~$x$ and~$y$. 
% We write 
% $$\big( \mathbb{X}^1_t(\zeta_0,x), \mathbb{X}^2_t(\zeta_0,y) \big) = \big( \mathbb{X}^{\mathcal U, \mathcal T_1}_t(\zeta_0,x),\mathbb{X}^{\mathcal U, \mathcal T_2}_t(\zeta_0,y) \big), \hspace{2mm} t\ge 0.$$ 
Lemma~\ref{lemma_main} and Lemma~\ref{lemma_limit_fl_g} imply that
\begin{equation}\label{NoMeetingxyFarStationarity}
    \lim_{L \to \infty}\sup_{|x-y|_1 > L}\Prob( \exists t \geq 0 : |\mathbb{X}^{\mathcal U, \mathcal T_1} _t(\zeta_0,x) - \mathbb{X}^{\mathcal U, \mathcal T_2}_t(\zeta_0,y)|_1 \leq \ell ) = 0.  
\end{equation}
Now we extend (\ref{NoMeetingxyFarStationarity}) to any configuration $\zeta$. For any~$E \in \mathcal{P}_{\mathrm{fin}}( E(\Z^d) )$, we consider~$\zeta^{\zeta \to E}_0$ as in~\eqref{XiEzeta}. Let $E(\zeta) = \{ e \in E: \zeta(e) = 1 \}$, then for any $x,y \in \Z^d$,
\begin{equation}\label{sum_1_collision}
    \begin{aligned}
    \Prob \big( \exists t & \geq 0:  |\mathbb{X}^{\mathcal U, \mathcal T_1}_t(\zeta^{\zeta \to E}_0,x) - \mathbb{X}^{\mathcal U, \mathcal T_2}_t(\zeta^{\zeta \to E}_0,y)|_1 \leq \ell \big) \\[.2cm]
    & = \Prob \big( \exists t \geq 0:  |\mathbb{X}^{\mathcal U, \mathcal T_1}_t(\zeta_0,x) - \mathbb{X}^{\mathcal U, \mathcal T_2}_t(\zeta_0,y)|_1 \leq \ell \mid \zeta_0 \equiv \zeta \text{ on } E \big) \\[.2cm]
    & \leq (p^{|E(\zeta)|}(1-p)^{|E|-|E(\zeta)|})^{-1} \cdot \Prob \big( \exists t \geq 0:  |\mathbb{X}^{\mathcal U, \mathcal T_1}_t(\zeta_0,x) - \mathbb{X}^{\mathcal U, \mathcal T_2}_t(\zeta_0,y)|_1 \leq \ell \big) \\[.2cm]
    & \leq c(p)^{|E|} \cdot \Prob \big( \exists t \geq 0:  |\mathbb{X}^{\mathcal U, \mathcal T_1}_t(\zeta_0,x) - \mathbb{X}^{\mathcal U, \mathcal T_2}_t(\zeta_0,y)|_1 \leq \ell \big),
    \end{aligned}
\end{equation}
where $c(p) = (\min\{p,1-p\})^{-1}$. Fix $\epsilon>0$ and choose~$r$ corresponding to~$\epsilon$ in Lemma \ref{ArgumentCouplingRW}. We then apply (\ref{sum_1_collision}) holds with~$E=E_r:=E_1(x,r)\cup E_1(y,r)$. Hence,
\begin{align*}
    \Prob ( \exists t \geq 0: |\mathbb{X}^{\mathcal U, \mathcal T_1}_t & (\zeta,x)  - \mathbb{X}^{\mathcal U, \mathcal T_2}_t(\zeta,y)|_1 \leq \ell \big) \\[.2cm]
    & \leq \Prob \big( \exists t \geq 0:  |\mathbb{X}^{\mathcal U, \mathcal T_1}_t(\zeta^{\zeta \to E_r}_0,x) - \mathbb{X}^{\mathcal U, \mathcal T_2}_t(\zeta^{\zeta \to E_r}_0,y)|_1 \leq \ell \big) + \epsilon \\[.2cm]
    & \leq c(p)^{|E_r|} \cdot \Prob \big( \exists t \geq 0:  |\mathbb{X}^{\mathcal U, \mathcal T_1}_t(\zeta_0,x) - \mathbb{X}^{\mathcal U, \mathcal T_2}_t(\zeta_0,y)|_1 \leq \ell \big) + \epsilon \stackrel{(\ast)} < 2\epsilon,
\end{align*}
where $(\ast)$ follows by (\ref{NoMeetingxyFarStationarity}) for $L$ large enough. 

% {\color{red} (The above display and the previous one were criminally packed together, making them ugly on the page and hard to read. I've added spaces between lines. I recommend you always use this ``[.2cm]'' between lines of equations -- it makes a big difference!)}
\end{proof}
\begin{proof}[Proof of Corollary \ref{CoroBigThm}]
Fix $\ell \geq 0$. We assume that the process is built from a graphical construction: we let~$\mathcal U$ encode the environment updates and~$\mathcal T_1$ and~$\mathcal T_2$ be the instruction manuals for the random walks. Given~$\epsilon>0$, due to Proposition \ref{Bigtheorem} there exists~$L>0$ such that for~$|x-y|_1 \geq L$ and any~$\zeta \in \Omega_\mathrm{edge}$,
\begin{equation}\label{meeting_ell_cor}
    h(x,y,\zeta) := \Prob( \exists t \geq 0: |\mathbb{X}^{\mathcal U, \mathcal T_1}_t(\zeta,x) - \mathbb{X}^{\mathcal U, \mathcal T_2}_t(\zeta,y)|_1 \leq \ell ) < \epsilon/2.     
\end{equation}
Fix an arbitrary $\zeta_0 \in \Omega_\mathrm{edge}$. For $|x-y|_1 \geq L$, we have that for any $t \ge 0$,
$$ \Prob( \exists s \geq t: |\mathbb{X}^{\mathcal U, \mathcal T_1}_s(\zeta_0,x)-\mathbb{X}^{\mathcal U, \mathcal T_2}_s(\zeta_0,y)| \leq \ell ) \leq h(x,y,\zeta_0) < \epsilon. $$
For $|x-y|_1 < L$, we consider the stopping time 
\begin{equation}\label{def_tau_apart_L}
    \tau = \inf \big\{ m \in \N: |\mathbb{X}^{\mathcal U, \mathcal T_1}_m(\zeta_0,x) - \mathbb{X}^{\mathcal U, \mathcal T_2}_m(\zeta_0,y)|_1 \geq L \big\}.
\end{equation}
It is easy to see that there exists $b > 0$ such that 
$$ \inf_{ \substack{ \zeta \in \Omega_\mathrm{edge} \\ x,y: |x-y|_1 < L} } \Prob \big(|\mathbb{X}^{\mathcal U, \mathcal T_1}_1(\zeta,x) - \mathbb{X}^{\mathcal U, \mathcal T_2}_1(\zeta,y)|_1 \geq L \big) \geq b.$$
By the Markov property, $\Prob(\tau \geq n) \leq (1-b)^n$ and then $\tau < \infty$ a.s. Hence,
\begin{equation*}
    \begin{aligned}
    \Prob \big( \exists s \ge t: |\mathbb{X}^{\mathcal U, \mathcal T_1}_s & (\zeta_0,x)  - \mathbb{X}^{\mathcal U, \mathcal T_2}_s(\zeta_0,y)|_1  \leq \ell \big) \\[.2cm]
    & \leq \Prob \big( \exists s \geq \tau: |\mathbb{X}^{\mathcal U, \mathcal T_1}_s(\zeta_0,x) - \mathbb{X}^{\mathcal U, \mathcal T_2}_s(\zeta_0,y)|_1  \leq \ell \big) + \Prob( \tau > t ) \\[.2cm]
    & \hspace{-0.4mm} \stackrel{(\ast)} \leq \E \big[ h(\mathbb{X}^{\mathcal U, \mathcal T_1}_\tau(\zeta_0,x),\mathbb{X}^{\mathcal U, \mathcal T_2}_\tau(\zeta_0,y),\zeta_\tau) \big] + \Prob( \tau > t )  \\[.2cm]
    & \hspace{-2.4mm} \stackrel{\eqref{meeting_ell_cor}} \leq \epsilon/2 + (1-b)^{ \lfloor t \rfloor } \stackrel{(\ast \ast)}{<} \epsilon, \\
    \end{aligned}
\end{equation*}
where $(\ast)$ holds by the Strong Markov property and $(\ast \ast)$ by taking $t$ large enough. 
\end{proof}

\subsection{Duality and stationary measures}\label{duality_stat_measures_vmodyn_section}
In this section, we define a \textbf{system of coalescing random walks on dynamical percolation} and prove that this process is a dual for the voter model on dynamical percolation~$M_t = (\zeta_t,\eta_t)$. As a consequence, we obtain a family of stationary measures that will coincide with the set of extremal stationary measures for $(M_t)$.

Given $k \in \N$, recall notation $z^{x \to y}$ from~\eqref{z_x_arrow_y}, for~$\mathbf{y} = (y_1,\dots,y_k) \in (\Z^d)^k$,~$x,y \in \Z^d$, we write
$$\mathbf{y}^{x \to y} = (y^{x \to y}_1,\dots, y^{x \to y}_k).$$
Fix $R,d\in \N$, $p \in [0,1], \mathsf{v}>0$, a system of~$k$ \textbf{coalescing random walks on a single dynamical percolation environment} is a Markov process~$(Y^1_t,\ldots, Y^k_t, \zeta_t)_{t \ge 0}$ on~$(\mathbb Z^d)^k \times \Omega_\mathrm{edge}$ with generator \begin{align*}
    &\mathcal L_{\mathrm{crwdyn}}f(\mathbf{y},\zeta) = \mathcal L_{\mathrm{dp}}f_\mathbf{y}(\zeta) \\
    &+ \frac{1}{|B_1(R)|-1}\sum_{x \in \Z^d} \sum_{y \in B_1(x,R)} \mathds{1}{\{x \xleftrightarrow{\zeta } y \text{ inside }B_1(x,R)\}} \cdot \big(f_\zeta(\mathbf{y}^{x \to y}) - f_\zeta(\mathbf{y}) \big)
    %&\hspace{12cm}-f_\zeta(x_1,\ldots,x_k) \big),
\end{align*}
where $f: (\Z^d)^k \times \Omega_\mathrm{edge} \to \R$ is any function such that $f_\mathbf{y}(\zeta)$ only depends on finitely many coordinates and $\zeta \in \Omega_\mathrm{edge}$.
% \begin{align*}
%     &\mathcal L_{\mathrm{crwdyn}}f(x_1,\ldots,x_k,\zeta) = \mathcal L_{\mathrm{dp}}f_{x_1,\ldots,x_k}(\zeta) \\
%     &+ \frac{1}{|B_1(R)-1|}\sum_{i=1}^k \sum_{y \in B_1(x_i,R)} \mathds{1}_{\{x_i \xleftrightarrow{\zeta } y \text{ inside }B_1(x_i,R)\}}  \cdot \big(f_\zeta(x^{x_i \to y}_1,\ldots, x^{x_i \to y}_k) - f_\zeta(x_1,\ldots,x_k) \big)
%     %&\hspace{12cm}-f_\zeta(x_1,\ldots,x_k) \big),
% \end{align*}
% where $f: (\Z^d)^k \times \Omega_\mathrm{edge} \to \R$ is any function such that $f_{x_1,\dots,x_k}(\zeta)$ only depends on finitely many coordinates and $\zeta \in \Omega_\mathrm{edge}$. We also write~$x^{x_i \to y} = y$ if $x = x_i$, otherwise, $x^{x_i \to y} = x$. 
% {\color{red}(Unfortunately, I think the above generator is wrong. For example, in case there are five entries equal to~$w$ in the list~$x_1,\ldots,x_k$, you are making them jump to~$y \in B_1(w,R)$ with rate~$5/|B_1(R)-1|$)}

\begin{remark}
Representing the positions of coalescing random walkers as an ordered list~$(Y^1_t,\ldots,Y^k_t)$, rather than a \textbf{set}, may seem odd; after all, if two coordinates of the ordered list are equal at some point in time, they stay equal afterwards. Still, it will be good to keep track of the locations of each of the walkers in a labelled manner, for some of our later purposes: notably, for a coupling with independent random walkers. For other purposes, we will be happy with discarding the labels and using only the set of locations; this is done for instance in~\eqref{coalescing_jump_rate} below.    
\end{remark}

In close analogy to the Markov chain \eqref{Markov_chain_same_environment}, we introduce a Markov chain~$(\mathcal C_t,\mathcal A_t,\mathcal B_t)_{t \ge 0}$, where $\mathcal{C}_t \in \mathcal{P}_\mathrm{fin}(\Z^d)$ corresponds to the set of locations of the coalescing random walks at time~$t$, and $\mathcal{A}_t$,~$\mathcal{B}_t$ in~$\mathcal P_{\mathrm{fin}}(E(\mathbb Z^d))$ are the sets of (open, closed, respectively) inspected edges by the coalescing random walks. In contrast to~\eqref{Markov_chain_same_environment}, where the two sets of inspected edges always started empty at time zero, here we will allow them to start from some initial knowledge. In other words, we now allow for~$\mathcal A_0$ and~$\mathcal B_0$ to be non-empty (but they must be disjoint). To write its jump rates, we introduce some notation. 

For~$x \in \Z^d, E,F \in \mathcal{P}_{\mathrm{fin}}(E(\Z^d))$ with $E \cap F = \varnothing$, we consider the set
$$ \mathcal{E}^R_x(E,F) := E_1(x,R) \setminus (E \cup F), $$
consisting of all edges in~$E_1(x,R)$ which are not in~$E$ or~$F$. We let
$$ \mathcal{B}^R_x(E,F) := \{ (E',F'): E',F' \subseteq E_1(x,R), E' \cap F' = \varnothing, E' \cup F' = \mathcal{E}^R_x(E,F) \},$$
that is,~$\mathcal B^R_x(E,F)$ is the collection of pairs~$(E',F')$ that give a partition~$\mathcal E^R_x(E,F)$. The sets~$E$ and~$E'$ will represent open edges and the sets~$F$ and~$F'$, closed edges.
When we fix a pair~$(E',F')\in \mathcal B^R_x(E,F)$, we can ask whether there is a path from~$x$ to~$y$ that stays inside~$B_1(x,R)$ and uses only edges in~$E \cup E'$. With this in mind, we define
$$ \mathcal{B}^R_{x \to y}(E,F) := \{ (E',F') \in \mathcal{B}^R_x(E,F): x \xleftrightarrow{\zeta_{E \cup E'}} y \text{ inside } B_1(x,R) \},$$
where $\zeta_{E \cup E'}(e) = 1$ if and only if $e \in E \cup E'$. We also define
$$ \mathcal{B}^R_{x \not \to y}(E,F):= \mathcal{B}^R_x(E,F) \setminus \mathcal{B}^R_{x \not \to y}(E,F).$$
Given $C \in \mathcal{P}_\mathrm{fin}(\Z^d)$, $x,y \in \Z^d$, we write~$C^y_x := (C \setminus \{x\}) \cup \{y\}$. The jump rates of the Markov chain $(\mathcal C_t, \mathcal A_t, \mathcal B_t)$ are:
\begin{equation}\label{coalescing_jump_rate}
    \begin{array}{ll}
        \vspace{1.5 mm} r\big( (C,E,F);(C,E \setminus \{e\}, F \setminus \{e\} ) \big) = \mathsf{v}, & e \in E \cup F  \\[.2cm]
        r\big( (C,E,F);( C_x^y,E \cup E', F \cup F' ) \big) = \frac{ p^{|E'|} (1-p)^{|F'|} }{|B_1(R)|-1}, & x \in C, y \in B_1(x,R)\setminus\{x\}, \\
        & (E',F') \in \mathcal{B}^R_{x \to y}(E,F). \\[.2cm]
        r\big( (C,E,F);( C,E \cup E', F \cup F' ) \big) = \frac{ p^{|E'|} (1-p)^{|F'|} }{|B_1(R)|-1}, & x \in C, y \in B_1(x,R)\setminus\{x\}, \\
        & (E',F') \in \mathcal{B}^R_{x \not \to y}(E,F), \\
        \end{array}
\end{equation}   
% {\color{red} (The choice of the letter $A$ for a set of vertices in the above display may confuse the reader, because~$\mathcal A$ is being used for a set of (open) edges. Maybe use~$C$ instead of~$A$? Well, after reading a bit further, I realize this would be a \emph{major} change, so maybe leave it as it is...)}

In Section~\ref{section_characterization_stat_measures}, we give a constructive definition of $(\mathcal C_t,\mathcal A_t, \mathcal B_t)$ using a graphical construction for the environment and instruction manuals for the random walks (so we identify the coalescing particles). This construction is made to obtain a coupling of a system of coalescing random walks with a system of independent random walks on a single environment. %{\color{red} (I'm confused: for the coupling, don't you need the labels of particles?)}

To state the duality relationship, we introduce some notation. Given $C \in \mathcal{P}_\mathrm{fin}(\Z^d)$ and~$E,F \in \mathcal{P}_\mathrm{fin}( E(\Z^d) )$ with $E \cap F = \varnothing$, we denote by $\mathbf{P}_{C,E,F}$ the probability measure under which the Markov chain $(\mathcal C_t,\mathcal A_t,\mathcal B_t)$ is defined and satisfies $\mathbf{P}_{C,E,F}(\mathcal C_0 = C,\mathcal A_0=E,\mathcal B_0=F) = 1$.

Given $(\eta,\zeta) \in \Omega_\mathrm{site} \times \Omega_\mathrm{edge}$, we denote by $P_{\eta,\zeta}$ a probability measure under which the voter model on dynamical percolation~$(M_t)_{t \geq 0} = (\eta_t,\zeta_t)_{t \ge 0}$ is defined and satisfies~$P_{\eta,\zeta}\big( M_0 = (\eta,\zeta) \big) =1$. Given a probability distribution~$\mu$ on~$\Omega_\mathrm{site} \times \Omega_\mathrm{edge}$, we write~$P_\mu = \int P_{\eta,\zeta} \; \mu(\mathrm{d}(\eta,\zeta))$. Also, we write $P_{\eta,\pi^\mathrm{edge}_p} = \int P_{\eta,\zeta}\; \pi^\mathrm{edge}_p(\mathrm{d} \zeta)$.

\begin{lemma}[Duality]\label{Duality}
The Markov processes $(M_t)_{t \geq 0}$ and $(\mathcal C_t,\mathcal A_t,\mathcal B_t)_{t \geq 0}$ are dual with respect to 
$$H(\eta,\zeta,C,E,F) = p^{-|E|} (1-p)^{-|F|} \cdot \phi(C,\eta) \cdot \varphi(E,F,\zeta),$$
where $\phi(C,\eta) = \prod_{x \in C}\eta(x)$ and $\varphi(E,F,\zeta) = \prod_{e \in E}\zeta(e) \cdot \prod_{e \in F}(1-\zeta(e))$. Then,
\begin{equation}\label{dual_equation_H}
    E_{\eta,\zeta}[ H(\eta_t, \zeta_t, C, E, F)] = \mathbf{E}_{C,E,F}[H( \eta, \zeta, \mathcal C_t, \mathcal A_t, \mathcal B_t )].
\end{equation}
\end{lemma}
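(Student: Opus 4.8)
The plan is to prove the duality relation \eqref{dual_equation_H} by the standard infinitesimal (generator) method: two Markov processes with generators $\mathcal L$ and $\widehat{\mathcal L}$ are dual with respect to a bounded function $H$ as soon as
\[
\mathcal L_{\mathrm{vmdyn}}\, H(\,\cdot\,,\,\cdot\,,C,E,F)(\eta,\zeta) \;=\; \mathcal L_{\mathrm{crwdyn}}^{\,\mathrm{set}}\, H(\eta,\zeta,\,\cdot\,,\,\cdot\,,\,\cdot\,)(C,E,F)
\]
for every fixed quintuple, where on the left the generator of $(M_t)$ acts on the $(\eta,\zeta)$ variables and on the right the generator associated to the jump rates \eqref{coalescing_jump_rate} acts on the $(C,E,F)$ variables. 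Once this identity of generators is established, \eqref{dual_equation_H} follows from the general duality criterion (integrating $\tfrac{d}{ds}\mathbf E_{C,E,F}[E_{\eta,\zeta}[H(\eta_{t-s},\zeta_{t-s},\mathcal C_s,\mathcal A_s,\mathcal B_s)]]=0$), provided $H$ lies in the common domain — which holds because $H$ depends on only finitely many site- and edge-coordinates and is bounded on the relevant configurations.

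\textbf{Decomposing the computation.}
First I would exploit the product structure $H = p^{-|E|}(1-p)^{-|F|}\,\phi(C,\eta)\,\varphi(E,F,\zeta)$ and the fact that both generators split into an environment part ($\mathcal L_{\mathrm{dp}}$) and a walk/voter part. For the environment part I would check that
\[
\mathcal L_{\mathrm{dp}}\big[\varphi(E,F,\cdot)\big](\zeta) \;=\; \mathsf v\!\!\sum_{e\in E\cup F}\!\big[\varphi_{e\text{-removed}} - \varphi\big],
\]
matching the rate-$\mathsf v$ transitions $(C,E,F)\to(C,E\setminus\{e\},F\setminus\{e\})$ on the dual side: flipping an edge $e\in E$ kills the factor $\zeta(e)$ and produces exactly the same increment as deleting $e$ from the revealed-open set, and similarly for $e\in F$; the prefactors $p^{-|E|}(1-p)^{-|F|}$ are arranged so the $c(e,\zeta,\mathsf v)$ rates and the density weights cancel correctly. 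For the voter/walk part I would verify, for a fixed $\zeta$, that copying $\eta(y)$ into site $x$ on the primal side corresponds on the dual side to the walker at $x$ attempting the jump $x\to y$, with the subtle point that the dual chain must \emph{reveal} the edges of $E_1(x,R)$ to determine whether the jump is admissible. This is precisely where the partition sets $\mathcal B^R_{x\to y}(E,F)$ and $\mathcal B^R_{x\not\to y}(E,F)$ enter: summing the rates $\tfrac{p^{|E'|}(1-p)^{|F'|}}{|B_1(R)|-1}$ over all completions $(E',F')$ reproduces, after multiplying by the weight $\varphi(E\cup E',F\cup F',\zeta)$, the single indicator $\mathds 1\{x\xleftrightarrow{\zeta}y\text{ inside }B_1(x,R)\}$ appearing in $\mathcal L_{\mathrm{vmdyn}}$.

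\textbf{The key cancellation.}
The crux, and the step I expect to be the main obstacle, is checking this last matching: that the \emph{revealing} of extra edges on the dual side is invisible under $H$ because $H$ already carries the weight $\varphi$ and the compensating factor $p^{-|E|}(1-p)^{-|F|}$. Concretely, for a given realized environment $\zeta$ consistent with the already-revealed $(E,F)$, only one completion $(E',F')\in\mathcal B^R_x(E,F)$ is consistent with $\zeta$ on $\mathcal E^R_x(E,F)$, and for that completion $\varphi(E\cup E',F\cup F',\zeta)=\varphi(E,F,\zeta)$ while $p^{|E'|}(1-p)^{|F'|}$ in the rate exactly cancels the change $p^{-|E\cup E'|}(1-p)^{-|F\cup F'|}$ versus $p^{-|E|}(1-p)^{-|F|}$ in the $H$-prefactor. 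Thus in the term-by-term comparison the combinatorial sum over $\mathcal B^R_{x\to y}$ collapses to the indicator that $x$ and $y$ are connected in the actual configuration $\zeta$, and the ``connected'' versus ``not connected'' cases ($\mathbf y^{x\to y}=\mathbf y$ in the latter) line up with the voter transition $\eta\to\eta^{y\to x}$ acting or not acting. I would carry out this cancellation carefully for one fixed $x$ and $y$, summing over $y\in B_1(x,R)$ and over $x\in C$, and confirm the normalizations $\tfrac1{|B_1(R)|-1}$ agree on both sides; once the two voter/walk contributions and the two environment contributions are each shown equal, adding them yields the generator identity and hence \eqref{dual_equation_H}.
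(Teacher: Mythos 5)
Your proposal is correct and follows essentially the same route as the paper: the paper also verifies the generator identity $\mathcal{L}_\mathrm{vmdyn}H_{C,E,F}(\eta,\zeta)=\sum r((C,E,F);(\bar C,\bar E,\bar F))[H_{\eta,\zeta}(\bar C,\bar E,\bar F)-H_{\eta,\zeta}(C,E,F)]$ (citing Liggett's duality criterion), splits it into the dynamical-percolation and random-walk parts, and uses exactly the cancellation you identify, namely that $\sum_{(E',F')\in\mathcal B_{x\to y}}\varphi(E',F',\zeta)=\mathds{1}\{x\stackrel{\zeta}{\leftrightarrow}y \text{ inside } B_1(x,R)\}$ together with $r_{x\to y}+r_{x\not\to y}=1$ and the matching of the $p^{|E'|}(1-p)^{|F'|}$ weights against the prefactor of $H$. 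No gaps.
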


\begin{proof}[Proof of Lemma \ref{Duality}]
We write
$$ H_{\eta,\zeta}(C,E,F) = H_{C,E,F}(\eta,\zeta) = H(\eta,\zeta,C,E,F).$$
% To that end, we write the Markov pre-generator of a system of coalescing random walks~$(Y^1_t\ldots, Y^k_t)$ with range $R$ on dynamical percolation~$(\zeta_t)$:
% \begin{equation}\label{Gen_crwdp}
%     \mathcal{L}_\mathrm{crwdp}f(\textbf{y},\zeta) = \sum_{\substack{x,y \in \Z^d: \\ 1 \leq |x-y|_1 \leq R}} \frac{f(\textbf{y}^{x \to y},\zeta) - f(\textbf{y},\zeta)}{|B_1(R)|-1 }\cdot \mathds{1}_{ \{ x \stackrel{\zeta}{\leftrightarrow} y \text{ inside } B_1(x,R) \} } + \mathcal{L}_\mathrm{dp}f_{\textbf{y}}(\zeta),
% \end{equation}
% where~$f: (\Z^d)^k \times \Omega_\mathrm{edge} \to \R$ is any function that only depends on finitely many coordinates of~$\Omega_\mathrm{edge}$,~$\textbf{y} \in (\Z^d)^k$, recall $\textbf{y}^{x \to y}$ from \eqref{Gen_crw}, $\zeta \in \Omega_\mathrm{edge}$, and~$f_{\textbf{y}}(\zeta)=f(\textbf{y},\zeta)$. We write~$A_t := \{Y_t^1,\ldots, Y_t^k\}$ for~$t \ge 0$ and
% $$ H_{\eta,\zeta}(C,E,F) = H_{C,E,F}(\eta,\zeta) = H(\eta,\zeta,C,E,F) $$
We aim to use Theorem 3.42 of \cite{Liggett2010}, which states that a sufficient condition for duality is to prove that 
$$ \mathcal{L}_\mathrm{vmdyn}H_{C,E,F}(\eta,\zeta) = \sum_{\Bar{C},\Bar{E},\Bar{F}}r\big( (C,E,F);(\Bar{C},\Bar{E},\Bar{F}) \big)\cdot \big[ H_{\eta,\zeta}(\Bar{C},\Bar{E},\Bar{F}) - H_{\eta,\zeta}(C,E,F) \big]. $$
We split this sum on the right-hand side of the last equation into the dynamical percolation and the random walks parts. The former one:
$$\sum_{\Bar{E},\Bar{F}} r\big( (C,E,F);( C, \Bar{E}, \Bar{F}) \big) \cdot \big[ H_{\eta,\zeta}(C,\Bar{E},\Bar{F}) - H_{\eta,\zeta}(C,E,F) \big].$$
For $e \in F$,
\begin{align*}
    r \big( (C &,E,F);( C, E \setminus \{e\}, F \setminus \{e\}) \big) \cdot \big[ H_{\eta,\zeta}(C,E \setminus \{e\},F \setminus \{e\}) - H_{\eta,\zeta}(C,E,F) \big] \\[.2cm]
    & = \mathsf{v} \cdot \phi(C,\eta) \cdot \big[ (p^{|E|} (1-p)^{|F|-1})^{-1} \cdot \varphi(E,F\setminus \{e\},\zeta) - (p^{|E|} (1-p)^{|F|})^{-1} \cdot \varphi(E,F,\zeta) \big] \\[.2cm]
    & = \mathsf{v} \cdot (p^{|E|}(1-p)^{|F|})^{-1} \cdot \phi(C,\eta) \cdot \big[ (1-p) \cdot \varphi(E,F \setminus \{e\},\zeta) - \varphi(E,F,\zeta) \big] \\[.2cm]
    & = (p^{|E|}(1-p)^{|F|})^{-1} \cdot \phi(C,\eta) \cdot c(e,\zeta) \cdot \big[ \varphi(E,F,\zeta_e) - \varphi(E,F,\zeta)  \big] \\[.2cm]
    & = c(e,\zeta) \cdot [ H_{C,E,F}(\eta,\zeta_e) - H_{C,E,F}(\eta,\zeta) ].
\end{align*}
An analogous computation applies to $e \in E$. Summing over $e \in E \cup F$, we have
$$\sum_{e \in E(\Z^d)}c(e,\zeta) \cdot \big[ H_{C,E,F}(\eta,\zeta_e) - H_{C,E,F}(\eta,\zeta) \big]. $$
We now turn to the random walk part. We drop the indices $(E,F)$ and $R$ in $\mathcal{B}^R_{x \to y}(E,F)$ and $\mathcal{B}^R_{x \not \to y}(E,F)$.   
\begin{align*}
    & \sum_{\Bar{C},\Bar{E},\Bar{F}} r\big( (C,E,F);(\Bar{C}, \Bar{E}, \Bar{F}) \big) \cdot \big[ H_{\eta,\zeta}(\Bar{C},\Bar{E},\Bar{F}) - H_{\eta,\zeta}(C,E,F) \big] \\
    & = \sum_{\substack{x,y \in \Z^d: \\ 0< |x-y|_1 \leq R}} \frac{ 1 }{|B_1(R)|-1} \Bigg( \sum_{ (E',F') \in \mathcal{B}_{x \to y}} p^{|E'|}(1-p)^{|F'|} \cdot \big[ H_{\eta,\zeta}(C^y_x,E \cup E',F \cup F') - H_{\eta,\zeta}(C,E,F) \big] \\
    & + \sum_{ (E',F') \in \mathcal{B}_{x \not \to y}} p^{|E'|}(1-p)^{|F'|} \cdot \big[ H_{\eta,\zeta}(C,E \cup E',F \cup F') - H_{\eta,\zeta}(C,E,F) \big] \Bigg).
\end{align*}
We fix $x \in C$ and $y \in B_1(x,R)$ and assume that $\zeta \equiv 1$ on $E$ and $\zeta \equiv 0$ on $F$, i.e., $\varphi(E,F,\zeta)=1$. For $(E',F') \in \mathcal{B}_{x \to y}$, 
\begin{equation}\label{VMSum}
    \begin{aligned}
    & H_{\eta,\zeta}(C^y_x,E \cup E',F \cup F') - H_{\eta,\zeta}(C,E,F) \\[.2cm]
    & =  (p^{|E|+|E'|}(1-p)^{|F|+|F'|})^{-1} \cdot \phi(C^y_x,\eta)\cdot \varphi(E',F',\zeta) -(p^{|E|}(1-p)^{|F|})^{-1} \cdot \phi(C,\eta)\\[.2cm]
    & = (p^{|E|}(1-p)^{|F|})^{-1} \cdot \big[ (p^{|E'|}(1-p)^{|F'|})^{-1} \cdot \phi(C,\eta^{y \to x})\cdot \varphi(E',F',\zeta) - \phi(C,\eta) \big].    
    \end{aligned}
\end{equation}
Note that under the assumption $\zeta \equiv 1$ on $E$ and $\zeta \equiv 0$ on $F$,
$$ \sum_{(E',F') \in \mathcal{B}_{x \to y}} \varphi(E',F',\zeta) = \mathds{1} \{ x \stackrel{\zeta}{\leftrightarrow} y \text{ inside } B(x,R) \}. $$
We write $r_{x \to y} := \sum_{ (E',F') \in \mathcal{B}_{x \to y}}p^{|E'|}(1-p)^{|F'|}$. Multiplying (\ref{VMSum}) by $p^{|E'|}(1-p)^{|F'|}$ and summing over $(E',F') \in \mathcal{B}_{x \to y}$ we have
\begin{equation}\label{duality_sum_1}
    \begin{aligned}
    & \sum_{ (E',F') \in \mathcal{B}_{x \to y}} p^{|E'|}(1-p)^{|F'|} \cdot \big[ H_{\eta,\zeta}(\Bar{C},E \cup \Bar{E},F \cup \Bar{F}) - H_{\eta,\zeta}(C,E,F) \big] \\[.2cm]
    & = \sum_{ (E',F') \in \mathcal{B}_{x \to y}} (p^{|E|}(1-p)^{|F|})^{-1} \cdot \big[ \phi(C,\eta^{y \to x})\cdot \varphi(E',F',\zeta) - (p^{|E'|}(1-p)^{|F'|}) \cdot \phi(C,\eta) \big] \\[.2cm]
    & = (p^{|E|}(1-p)^{|F|})^{-1} \cdot \big[ \phi(C,\eta^{y \to x})\cdot \mathds{1}\{ x \stackrel{\zeta}{\leftrightarrow} y \text{ inside } B(x,R) \} - r_{x \to y} \cdot \phi(C,\eta) \big].
\end{aligned}    
\end{equation}
We do an analogous computation for $(E',F') \in \mathcal{B}_{x \not \to y}$,
\begin{equation*}
    \begin{aligned}
    & H_{\eta,\zeta}(C,E \cup E',F \cup F') - H_{\eta,\zeta}(C,E,F) \\[.2cm]
    & = (p^{|E|}(1-p)^{|F|})^{-1} \cdot \big[ (p^{|E'|}(1-p)^{|F'|})^{-1} \cdot \phi(C,\eta)\cdot \varphi(E',F',\zeta) - \phi(C,\eta) \big].    
    \end{aligned}
\end{equation*}
We write $r_{x \not \to y } := \sum_{(E',F') \in \mathcal{B}_{x \not \to y}}p^{|E'|}(1-p)^{|F'|}$. Then
\begin{equation}\label{duality_sum_2}
\begin{aligned}
    & \sum_{ (E',F') \in \mathcal{B}_{x \not \to y}} (p^{|E'|}(1-p)^{|F'|}) \cdot \big[  H_{\eta,\zeta}(C,E \cup E',F \cup F') - H_{\eta,\zeta}(C,E,F) \big] \\
    & = \sum_{ (E',F') \in \mathcal{B}_{x \not \to y}} (p^{|E|}(1-p)^{|F|})^{-1} \cdot \big[ \phi(C,\eta)\cdot \varphi(E',F',\zeta) - (p^{|E'|}(1-p)^{|F'|}) \cdot \phi(C,\eta) \big] \\
    & = (p^{|E|}(1-p)^{|F|})^{-1} \cdot \big[ \phi(C,\eta)\cdot (1-\mathds{1} \{ x \stackrel{\zeta}{\leftrightarrow} y \text{ inside } B(x,R) \}) - r_{x \not \to y} \cdot \phi(C,\eta) \big].
\end{aligned}    
\end{equation}
Recall that we are assuming that $\varphi(E,F,\zeta) = 1$ and note that $r_{x \to y } + r_{x \not \to y } = 1$. Hence, summing (\ref{duality_sum_1}) and (\ref{duality_sum_2}),
\begin{align*}
    \sum_{\Bar{A},\Bar{E},\Bar{F}} & r\big( (C,E,F);(\Bar{C}, \Bar{E}, \Bar{F}) \big) \cdot \big[ H_{\eta,\zeta}(\Bar{C},\Bar{E},\Bar{F}) - H_{\eta,\zeta}(C,E,F) \big] \\
    & = \sum_{\substack{x,y \in \Z^d: \\ 0< |x-y|_1 \leq R}} \frac{ (p^{|E|}(1-p)^{|F|})^{-1} }{|B_1(R)|-1} \cdot \Big( \phi(C,\eta^{y \to x}) - \phi(C,\eta) \Big) \cdot \mathds{1} \{ x \stackrel{\zeta}{\leftrightarrow} y \text{ inside } B(x,R) \} \\
    & = \sum_{\substack{x,y \in \Z^d: \\ 0< |x-y|_1 \leq R}} \frac{ 1 }{|B_1(R)|-1} \cdot \Big[ H_{C,E,F}(\eta^{y \to x},\zeta) - H_{C,E,F}(\eta,\zeta) \Big].
\end{align*}
Finally, we have the duality relationship by summing the two parts (random walks and dynamical percolation). 
\end{proof}
An immediate consequence of the duality relationship is the following lemma which will be crucial to characterise the stationary measures of the voter model on dynamical percolation.

\begin{lemma}\label{DualityNice}
Let $(\eta,\zeta) \in \Omega_\mathrm{site} \times \Omega_\mathrm{edge}$ and~$C \in \mathcal{P}_{\mathrm{fin}}(\Z^d)$,~$E,F \in \mathcal{P}_{\mathrm{fin}}(E(\Z^d))$ with~$E \cap F = \varnothing$, we have that for any $t \ge 0$,
\begin{equation}\label{DualityNice_equation}
    \begin{aligned}
        P_{\eta,\zeta}(\eta_t \equiv 1 \text{ on } & C, \zeta_t \equiv 1 \text{ on } E, \zeta_t \equiv 0 \text{ on } F ) \\
        & = p^{|E|}(1-p)^{|F|} \cdot \mathbf{E}_{C,E,F} \big[ p^{-|\mathcal A_t|}(1-p)^{-|\mathcal B_t|} \cdot \phi(\mathcal C_t,\eta) \cdot \varphi(\mathcal A_t, \mathcal B_t,\zeta) \big].
    \end{aligned}
\end{equation}
\end{lemma}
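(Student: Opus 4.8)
The plan is to apply the duality relationship \eqref{dual_equation_H} directly and simply unpack the definition of $H$ on both sides; the entire content of the lemma is bookkeeping. The one conceptual point worth isolating is that the product $\phi\cdot\varphi$ becomes an indicator function on the \emph{left} side of the duality (where $C$, $E$, $F$ are deterministic), while the analogous product on the \emph{right} side cannot be so simplified and must be kept inside the expectation together with the random normalizing powers.

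First I would treat the left-hand side of \eqref{dual_equation_H}. Since $C$, $E$, $F$ are fixed, the prefactor $p^{-|E|}(1-p)^{-|F|}$ in the definition of $H$ pulls out of the expectation $E_{\eta,\zeta}[\,\cdot\,]$. The key observation is that the remaining factors are indicators: $\phi(C,\eta_t) = \prod_{x \in C}\eta_t(x)$ is the indicator of the event $\{\eta_t \equiv 1 \text{ on } C\}$, while $\varphi(E,F,\zeta_t) = \prod_{e \in E}\zeta_t(e)\cdot\prod_{e \in F}(1-\zeta_t(e))$ is the indicator of $\{\zeta_t \equiv 1 \text{ on } E,\ \zeta_t \equiv 0 \text{ on } F\}$, each factor being $\{0,1\}$-valued. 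Their product is therefore the indicator of the joint event, so
\[
E_{\eta,\zeta}\big[H(\eta_t,\zeta_t,C,E,F)\big] = p^{-|E|}(1-p)^{-|F|}\cdot P_{\eta,\zeta}\big(\eta_t \equiv 1 \text{ on } C,\ \zeta_t \equiv 1 \text{ on } E,\ \zeta_t \equiv 0 \text{ on } F\big).
\]

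Next I would substitute the definition of $H$ into the right-hand side of \eqref{dual_equation_H}, obtaining $\mathbf{E}_{C,E,F}\big[p^{-|\mathcal A_t|}(1-p)^{-|\mathcal B_t|}\cdot \phi(\mathcal C_t,\eta)\cdot \varphi(\mathcal A_t,\mathcal B_t,\zeta)\big]$. Here no collapse into a single indicator is possible, precisely because $\mathcal A_t$ and $\mathcal B_t$ are random (and typically strictly larger than $E$ and $F$, since the walkers inspect further edges as time passes), which is exactly why the powers $p^{-|\mathcal A_t|}(1-p)^{-|\mathcal B_t|}$ remain inside the expectation. Finally I would equate the two displayed expressions and multiply through by $p^{|E|}(1-p)^{|F|}$ to isolate the probability on the left, yielding \eqref{DualityNice_equation}. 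I do not anticipate any genuine obstacle; the only step requiring a moment's care is the asymmetric treatment of the product $\phi\cdot\varphi$ noted above.
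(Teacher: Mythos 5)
Your proposal is correct and is essentially the same argument as the paper's proof: both recognize that $\phi(C,\eta_t)\cdot\varphi(E,F,\zeta_t)$ is the indicator of the joint event so that $E_{\eta,\zeta}[H(\eta_t,\zeta_t,C,E,F)]=p^{-|E|}(1-p)^{-|F|}\cdot P_{\eta,\zeta}(\cdots)$, then apply \eqref{dual_equation_H} and unpack $H$ on the dual side. The paper's proof is just this computation written as a single chain of equalities.
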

\begin{proof}
It follows from the duality relationship that 
\begin{equation*}
    \begin{aligned}
        P_{\eta,\zeta}(\eta_t \equiv 1 \text{ on } C &,\; \zeta_t \equiv 1 \text{ on } E ,\; \zeta_t \equiv 0 \text{ on } F ) \\[.2cm]
        & = p^{|E|}(1-p)^{|F|} \cdot E_{\eta,\zeta}H( \eta_t,\zeta_t,C,E,F ) \\[.2cm]
        & \hspace{-2.2mm} \stackrel{\eqref{dual_equation_H}} = p^{|E|}(1-p)^{|F|} \cdot \mathbf{E}_{C,E,F}H( \eta,\zeta,\mathcal C_t, \mathcal A_t, \mathcal B_t ) \\[.2cm]
        & = p^{|E|}(1-p)^{|F|} \cdot \mathbf{E}_{C,E,F} \big[ p^{-|\mathcal A_t|}(1-p)^{-|\mathcal B_t|} \cdot \phi(\mathcal C_t,\eta) \cdot \varphi(\mathcal A_t, \mathcal B_t,\zeta) \big]. 
    \end{aligned}    
\end{equation*}    
\end{proof}
From Lemma~\ref{DualityNice}, we derive two corollaries. First, we prove the existence of the stationary measures $\mu^\mathrm{dyn}_\alpha$, $\alpha \in [0,1]$, of Theorem \ref{ThmCharacterization} (characterization of stationary measures are proved in Section \ref{section_characterization_stat_measures}). The second is Corollary~\ref{CoroDuality}, which will be applied in Section \ref{section_characterization_stat_measures}. Before proceeding, we introduce some notation. 

Recall that $\mathcal{I}$ represents the set of all stationary measures of the voter model on dynamical percolation. Let $\mu$ be a measure on $\Omega_\mathrm{site} \times \Omega_\mathrm{edge}$, we write
\begin{equation}\label{mu_bar_definition}
    \begin{aligned}
    &\widehat{\mu}(C,E,F) = \mu( (\eta,\zeta): \eta \equiv 1 \text{ on } C,\; \zeta \equiv 1 \text{ on } E,\; \zeta \equiv 0 \text{ on } F ) \quad \text{and} \\
    &\Bar{\mu}(C,E,F) = \mu( (\eta,\zeta): \eta \equiv 1 \text{ on } C \mid \zeta \equiv 1 \text{ on } E,\; \zeta \equiv 0 \text{ on } F ).    
    \end{aligned}
\end{equation}

\begin{proof}[Proof of Theorem \ref{ThmCharacterization}]
Integrating~\eqref{DualityNice_equation} over $\pi^\mathrm{edge}_p(\mathrm{d}\zeta)$ and applying Fubini's theorem we have that
\begin{equation}\label{DualityPi}
    P_{\eta,\pi^\mathrm{edge}_p}(\eta_t \equiv 1 \text{ on } C,\; \zeta_t \equiv 1 \text{ on } E,\; \zeta_t \equiv 0 \text{ on } F ) = p^{|E|}(1-p)^{|F|} \cdot \mathbf{P}_{C,E,F} ( \eta \equiv 1 \text{ on } \mathcal C_t ).  
\end{equation}
Recall the Bernoulli product measure $\pi^\mathrm{site}_\alpha$ on $\Z^d$. Then,
\begin{equation*}
    \begin{aligned}
    P_{\pi^\mathrm{site}_\alpha \otimes \pi^\mathrm{edge}_p}\big( \eta_t \equiv 1 & \text{ on } C,\;\zeta_t \equiv 1 \text{ on } E,\; \zeta_t \equiv 0 \text{ on } F \big) \\[.2cm]
    & = \int P_{\eta \otimes \pi^\mathrm{edge}_p}\big( \eta_t \equiv 1 \text{ on } C,\;\zeta_t \equiv 1 \text{ on } E,\;\zeta_t \equiv 0 \text{ on } F \big) \;\pi^\mathrm{site}_\alpha(d\eta) \\[.2cm]
    & \hspace{-2.3mm} \stackrel{(\ref{DualityPi})}{=} p^{|E|}(1-p)^{|F|} \cdot \int \mathbf{P}_{C,E,F} ( \eta \equiv 1 \text{ on } \mathcal C_t) \;\pi^\mathrm{site}_\alpha(d\eta) \\[.2cm]
    & = p^{|E|}(1-p)^{|F|} \cdot \mathbf{E}_{C,E,F} \big[ \alpha^{|\mathcal C_t|} \big].
    \end{aligned}
\end{equation*}
Let us denote $|\mathcal C_\infty| = \lim_{t \to \infty}|\mathcal C_t|$; the limit exists because $|\mathcal C_t|$ decreases with $t$. By taking the limit on the last equality as $t \to \infty$, we have that
$$\lim_{t \to \infty}P_{\pi^\mathrm{site}_\alpha \otimes \pi^\mathrm{edge}_p}\big( \eta_t \equiv 1 \text{ on } C,\zeta_t \equiv 1 \text{ on } E, \zeta_t \equiv 1 \text{ on } F \big) =  p^{|E|}(1-p)^{|F|} \cdot \mathbf{E}_{C, E,F} \big[ \alpha^{|\mathcal C_\infty|} \big]. $$
Therefore, under $P_{\pi^\mathrm{site}_\alpha \otimes \pi^\mathrm{edge}_p}$, as $t \to \infty$, $(\eta_t,\zeta_t)$ converges in distribution to a measure~$\mu^{\mathrm{dyn}}_\alpha$ on~$\Omega_\mathrm{site}\times\Omega_\mathrm{edge}$ characterized by
\begin{equation}\label{mu_alpha_def_with_A_infty}
    \bar \mu^{\mathrm{dyn}}_\alpha( C, E, F ) \stackrel{\eqref{mu_bar_definition}} = p^{-|E|}(1-p)^{-|F|} \cdot \widehat \mu^\mathrm{dyn}_\alpha(C,E,F) =  \mathbf{E}_{C,E,F} \big[ \alpha^{|\mathcal C_\infty|} \big].
\end{equation}
As the measures $\mu^{\mathrm{dyn}}_\alpha$ are obtained as distributional limits of $(\eta_t,\zeta_t)$, they are stationary for this process.    
\end{proof}

\begin{corollary}\label{CoroDuality}
Let $C \in \mathcal{P}_{\mathrm{fin}}(\Z^d)$ and $E,F \in \mathcal{P}_{\mathrm{fin}}(E(\Z^d)) \text{ with } E \cap F = \varnothing$. Then, for any~$\mu \in \mathcal I$ we have that~$\Bar{\mu}( C,E,F) = \mathbf{E}_{C,E,F} \big[ \Bar{\mu}( \mathcal C_t, \mathcal A_t, \mathcal B_t ) \big]$ for all $t\ge 0$.
\end{corollary}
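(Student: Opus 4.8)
The plan is to integrate the duality identity of Lemma~\ref{DualityNice} against the stationary measure $\mu$ and then exploit two facts: the stationarity of $\mu$ and the autonomy of the environment dynamics. Concretely, first I would integrate both sides of~\eqref{DualityNice_equation} over $\mu(\mathrm{d}(\eta,\zeta))$. On the left-hand side, $\int P_{\eta,\zeta}(\eta_t \equiv 1 \text{ on } C,\, \zeta_t \equiv 1 \text{ on } E,\, \zeta_t \equiv 0 \text{ on } F)\,\mu(\mathrm{d}(\eta,\zeta)) = P_\mu(\eta_t \equiv 1 \text{ on } C,\, \zeta_t \equiv 1 \text{ on } E,\, \zeta_t \equiv 0 \text{ on } F)$, and since $\mu \in \mathcal I$ is stationary this equals $\widehat\mu(C,E,F)$. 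On the right-hand side, the dual chain $(\mathcal C_t,\mathcal A_t,\mathcal B_t)$ under $\mathbf P_{C,E,F}$ is independent of $(\eta,\zeta)\sim\mu$, so by Fubini's theorem I can pull the $\mu$-integral inside $\mathbf E_{C,E,F}$. Conditionally on the dual chain, $\int \phi(\mathcal C_t,\eta)\,\varphi(\mathcal A_t,\mathcal B_t,\zeta)\,\mu(\mathrm{d}(\eta,\zeta)) = \widehat\mu(\mathcal C_t,\mathcal A_t,\mathcal B_t)$, because $\phi(\mathcal C_t,\cdot)\,\varphi(\mathcal A_t,\mathcal B_t,\cdot)$ is exactly the indicator that $\eta\equiv 1$ on $\mathcal C_t$, $\zeta\equiv 1$ on $\mathcal A_t$ and $\zeta\equiv 0$ on $\mathcal B_t$. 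This yields
\begin{equation*}
\widehat\mu(C,E,F) = p^{|E|}(1-p)^{|F|}\cdot \mathbf E_{C,E,F}\big[p^{-|\mathcal A_t|}(1-p)^{-|\mathcal B_t|}\,\widehat\mu(\mathcal C_t,\mathcal A_t,\mathcal B_t)\big].
\end{equation*}

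Next I would convert $\widehat\mu$ into $\bar\mu$ using the key structural fact that the edge-marginal of any $\mu\in\mathcal I$ is $\pi^{\mathrm{edge}}_p$. This holds because the coordinate $\zeta_t$ of $(M_t)$ evolves autonomously as a dynamical percolation (its generator $\mathcal L_{\mathrm{dp}}$ does not depend on $\eta$), so the projection of $\mu$ onto $\Omega_\mathrm{edge}$ is stationary for the dynamical percolation, whose unique stationary law is $\pi^{\mathrm{edge}}_p$. Consequently $\mu(\zeta\equiv 1 \text{ on } E,\, \zeta\equiv 0 \text{ on } F) = p^{|E|}(1-p)^{|F|}$, and by~\eqref{mu_bar_definition},
\begin{equation*}
\widehat\mu(C,E,F) = p^{|E|}(1-p)^{|F|}\,\bar\mu(C,E,F),\qquad \widehat\mu(\mathcal C_t,\mathcal A_t,\mathcal B_t) = p^{|\mathcal A_t|}(1-p)^{|\mathcal B_t|}\,\bar\mu(\mathcal C_t,\mathcal A_t,\mathcal B_t).
\end{equation*}
Substituting the second identity into the displayed equation above, the factors $p^{-|\mathcal A_t|}(1-p)^{-|\mathcal B_t|}$ and $p^{|\mathcal A_t|}(1-p)^{|\mathcal B_t|}$ cancel inside the expectation, and dividing through by $p^{|E|}(1-p)^{|F|}$ gives $\bar\mu(C,E,F) = \mathbf E_{C,E,F}[\bar\mu(\mathcal C_t,\mathcal A_t,\mathcal B_t)]$, as claimed.

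The computation is essentially bookkeeping once these two ingredients are in place. The only point requiring care---and the main (if mild) obstacle---is justifying that the edge-marginal of $\mu$ is exactly $\pi^{\mathrm{edge}}_p$; this is what turns $\bar\mu$ into a genuine conditional probability and makes the random normalizing factors cancel. A reassuring consistency check is that $\widehat\mu(\mathcal C_t,\mathcal A_t,\mathcal B_t) \le \mu(\zeta\equiv 1 \text{ on } \mathcal A_t,\, \zeta\equiv 0 \text{ on } \mathcal B_t) = p^{|\mathcal A_t|}(1-p)^{|\mathcal B_t|}$, so the integrand $\bar\mu(\mathcal C_t,\mathcal A_t,\mathcal B_t)$ is bounded by $1$ and the application of Fubini's theorem is unproblematic. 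For $p\in\{0,1\}$ one restricts to pairs $(E,F)$ for which the conditioning event has positive probability; in the relevant range $p\in(0,1)$ no such restriction is needed.
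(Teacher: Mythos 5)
Your proposal is correct and follows essentially the same route as the paper: integrate the identity of Lemma~\ref{DualityNice} against $\mu$, use stationarity on the left and Fubini on the right, and cancel the normalizing factors. The only difference is that you explicitly justify the fact that the edge marginal of any $\mu\in\mathcal I$ is $\pi^{\mathrm{edge}}_p$ (via autonomy of the environment), a point the paper uses implicitly in writing $\bar\mu(C,E,F)=p^{-|E|}(1-p)^{-|F|}\widehat\mu(C,E,F)$.
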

\begin{proof}
We have for any $ t\ge0$,
\begin{equation*}
    \begin{aligned}
        \Bar{\mu}( C,E,F) & = p^{-|E|}(1-p)^{-|F|} \cdot \widehat{\mu}( C,E,F) \\[.2cm]
        & = p^{-|E|}(1-p)^{-|F|} \cdot \int P_{\eta,\zeta}(\eta_t \equiv 1 \text{ on } C,\; \zeta_t \equiv 1 \text{ on } E,\; \zeta_t \equiv 0 \text{ on } F ) \;\mu( \mathrm{d}(\eta,\zeta) ) \\[.2cm]
        & \hspace{-1.5mm} \stackrel{(\ref{DualityNice})}{=} \int \mathbf{E}_{C,E,F} \big[ p^{-|\mathcal A_t|}(1-p)^{-|\mathcal B_t|} \cdot \phi(\mathcal C_t,\eta) \cdot \varphi(\mathcal A_t,\mathcal B_t,\zeta) \big] \;\mu( \mathrm{d}(\eta,\zeta) ) \\[.2cm]
        & \stackrel{(\ast)}{=} \mathbf{E}_{C,E,F} \big[ \Bar{\mu}( \mathcal C_t, \mathcal A_t, \mathcal B_t ) \big],  
    \end{aligned}
\end{equation*}
where $(\ast)$ holds by Fubini's Theorem.
\end{proof}

\subsection{Characterization of stationary measures}\label{section_characterization_stat_measures}
We start this section by constructing a coupling of a system of coalescing random walks on dynamical percolation with a system of independent random walks on a single dynamical percolation.

Fix $R,d,k \in \N$, $p \in [0,1]$, $\mathsf{v} > 0$. Given a graphical construction $\mathcal{U}$ for dynamical percolation with parameters $p$ and $\mathsf{v}$, points $x_1,\dots,x_k$, $\zeta \in \Omega_\mathrm{edge}$ and instruction manuals~$\mathcal T_1, \dots, \mathcal T_k$ for random walks with range $R$ we define the process
$$ \big( \mathbb{X}_t^{\mathcal U, \mathcal{T}_1}(\zeta,x_1), \dots, \mathbb{X}_t^{\mathcal U, \mathcal{T}_k}(\zeta,x_k), \mathbb{A}_t^{\mathcal U, \mathbf{T}}(\zeta,\mathbf{x}), \mathbb{B}_t^{\mathcal U, \mathbf{T}}(\zeta,\mathbf{x}) \big), \; t \geq 0. $$
We pointed out in Remark~\ref{remark_mc_xab} that this process is a Markov chain in the case when $\zeta_0 \sim \pi^\mathrm{edge}_p$ and the process is started from $(x_1,\dots,x_k,\varnothing,\varnothing)$, i.e., there is no initial knowledge of the environment. To obtain the desired coupling we need to construct this Markov chain so that it allows some initial knowledge. To do so, we define the following objects: let $E,F \in \mathcal{P}_\mathrm{fin}(E(\Z^d))$ with $E \cap F = \varnothing$, 
\begin{equation*}
    \begin{aligned}
        & E_t := \{e \in E: \mathcal{U}^e \cap [0,t] \neq \varnothing \}, \; t \ge 0;\\
        & F_t := \{e \in F: \mathcal{U}^e \cap [0,t] \neq \varnothing \}, \; t \ge 0;\\
        & \bar \zeta \in \Omega_\mathrm{edge}: \bar \zeta \equiv 1 \text{ on } E \text{ and } \bar \zeta \equiv 0 \text{ on } E^c.
    \end{aligned}
\end{equation*}
Let $\zeta_0 \sim \pi^\mathrm{edge}_p$, we define the configuration $\zeta_0^{\bar \zeta \to E \cup F}$. See~\eqref{XiEzeta}.  Then it follows that
\begin{equation}\label{MC_rws_starting_xyEF}
    \big( \mathbb X_t^{\mathcal U, \mathcal T_1}(\zeta_0^{\bar \zeta \to E \cup F},x_1),\dots, \mathbb X_t^{\mathcal U, \mathcal T_k}(\zeta_0^{\bar \zeta \to E \cup F},x_k),\; \mathbb{A}_t^{\mathcal U, \mathbf{T}}(\zeta_0^{\bar \zeta \to E \cup F}, \mathbf{x}) \cup E_t,\; \mathbb{B}_t^{\mathcal U, \mathbf{T}}(\zeta_0^{\bar \zeta \to E \cup F}, \mathbf{x}) \cup F_t \big)
\end{equation}
is a Markov chain started from $(x_1,\dots,x_k,E,F)$. Recall the notation $\mathbf x^{i,y}$ from~\eqref{x_bf_i_y}. The jumping rates of the Markov chain~\eqref{MC_rws_starting_xyEF} are
\begin{equation}\label{matrix_of_rates_xoc_process}
    \begin{array}{lll}
    q\big((\textbf{x},E,F);(\textbf{x},E \setminus \{e\}, F\setminus \{e\})\big) & = \mathsf{v}, & e \in E \cup F; \\[.2cm]
    q\big((\textbf{x},E,F);(\textbf{x}^{i,y},E \cup E', F \cup F')\big) & = \frac{p^{|E'|}(1-p)^{|F'|}}{|B_1(R)|-1}, & i=1,\dots,k, y \in B_1(x_i,R) \setminus \{x_i\},\\
    & & (E',F') \in \mathcal{B}^R_{x_i \to y}(E,F);  \\[.2cm]
    q\big((\textbf{x},E,F);(\textbf{x},E \cup E', F \cup F')\big) & = \frac{p^{|E'|}(1-p)^{|F'|}}{|B_1(R)|-1}, & i=1,\dots,k, y \in B_1(x_i,R) \setminus \{x_i\}, \\ 
    & & (E',F') \in \mathcal{B}^R_{x_i \not \to y}(E,F). 
\end{array}
\end{equation}

Now, to construct the coupling with a system of coalescing random walks. To abbreviate notation, we write
$$ (X^i_t,\mathbb{A}^i_t, \mathbb{B}^i_t) = ( \mathbb{X}_t^{\mathcal U, \mathcal{T}_i}(\zeta_0^{\bar \zeta \to E \cup F},x_i), \mathbb{A}_t^{\mathcal U, \mathcal T_i}(\zeta_0^{\bar \zeta \to E \cup F},x_i), \mathbb{B}_t^{\mathcal U, \mathcal T_i}(\zeta_0^{\bar \zeta \to E \cup F},x_i) ),\;  i = 1,\dots,k. $$
We define the processes $(Y^{x_i}_t, A^{x_i}_t, B^{x_i}_t)_{t \geq 0}$ for $i=1,\dots,k$ by induction, as follows. Put $(Y^{x_1}_t,A^{x_1}_t, B^{x_1}_t) = (X^{x_1}_t, \mathbb{A}^{x_1}_t, \mathbb{B}^{x_1}_t)$ for all $t \geq 0$. Assume~$(Y^{x_1}_t, A^{x_1}_t, B^{x_1}_t)_{t \ge 0}, \dots, (Y^{x_n}_t,A^{x_n}_t, B^{x_n}_t)_{t \ge 0}$ are defined and let
$$ \sigma = \inf \{ t \geq 0: X^{x_{n+1}}_t = Y^{x_i}_t \text{ for some } i \leq n \}. $$
\begin{itemize}
    \item On $\{\sigma = \infty\}$, let $(Y^{x_{n+1}}_t,A^{x_{n+1}}_t,B^{x_{n+1}}_t) = (X^{x_{n+1}}_t, \mathbb{A}^{x_{n+1}}_t, \mathbb{B}^{x_{n+1}}_t)$ for all $t \geq 0$. 
    \item On $\{\sigma < \infty\}$, let $K$ be the smallest index such that $X^{x_{n+1}}_\sigma = Y^{x_K}_\sigma$. Put
$$ (Y^{x_{n+1}}_t, A^{x_{n+1}}_t, B^{x_{n+1}}_t) = \left\{ \begin{array}{ll}
    (X^{x_{n+1}}_t,\mathbb{A}^{x_{n+1}}_t,\mathbb{B}^{x_{n+1}}_t) & \text{if } t \leq \sigma  \\[.2cm]
    (Y^{x_K}_t,\mathbb{A}^{x_K}_t \cup E^{x_{n+1}}_t, \mathbb{B}^{x_K}_t \cup F^{x_{n+1}}_t) & \text{if } t > \sigma, 
\end{array} \right. $$
where $ E^{x_{n+1}}_t :=\{ e \in E^{x_{n+1}}_\sigma: \mathcal{U}^e \cap [\sigma,t] = \varnothing \}$ and $ F^{x_{n+1}}_t :=\{ e \in F^{x_{n+1}}_\sigma: \mathcal{U}^e \cap [\sigma,t] = \varnothing \}$. 
\end{itemize}
We see that
$$(C_t,A_t,B_t) = \big( \big\{ Y^{x_i}_t: i=1,\dots,k \big\}, \big( \cup^k_{i = 1} A^{x_i}_t \big) \cup E_t, \big( \cup^k_{i = 1} B^{x_i}_t \big) \cup F_t\big) $$
is a Markov chain with jumping rates \eqref{coalescing_jump_rate} started from $(\{x_1,\dots,x_k\},E,F)$ with the following properties. First, let $ \mathcal X_t := \{ X^i_t: i = 1,\dots,k \}$, then $(C_t, A_t, B_t)$ and $(\X_t,\mathbb{A}_t,\mathbb{B}_t)$ agree until the first time $t$ such that $|\X_t|<|\X_0|$. Second, $C_t \subseteq \X_t$, $A_t \subseteq \mathbb{A}_t$ and~$B_t \subseteq \mathbb{B}_t$ for all~$t \geq 0$. Then, for any~$|f| \leq 1$,
\begin{equation}\label{coupling_AET_XET}
    \big| \E[f(C_t, A_t, B_t)] - \E[f(\X_t,\mathbb{A}_t,\mathbb{B}_t)] \big| \leq \Prob(\exists t \geq 0: |\X_t|<|\X_0|). 
\end{equation}
Now we use \eqref{coupling_AET_XET} to prove Theorem \ref{ThmCharacterization}. 

\subsubsection{Dimension 1 and 2}
Let $\mu \in \mathcal{I}$. Then, for every $t \geq 0$, $x,y \in \Z^d$, writing $\mathcal{C}_t = \{Y^1_t,Y^2_t\}$, we have by Corollary~\ref{CoroDuality} that
\begin{align*}
    \mu\big( (\eta,\zeta): \eta(x) \neq \eta(y) \big) & = \mathbf{E}_{ \{x,y\},\varnothing,\varnothing} \big[ \mu( (\eta,\zeta): \eta(Y^1_t) \neq \eta(Y^2_t) \mid \zeta \equiv 1 \text{ on } \mathcal A_t, \zeta \equiv 0 \text{ on } \mathcal B_t) \big]\\
    & \leq  \mathbf{P}_{ \{x,y\},\varnothing,\varnothing}( Y^1_t \neq Y^2_t ) \stackrel{(\ast)}= \Prob( \forall s \leq t: X^1_s \neq X^2_s \mid \zeta_0 \sim \pi^\mathrm{edge}_p),    
\end{align*}
where $(\ast)$ holds by the coupling with independent random walks on dynamical percolation and $(X^1_t,X^2_t,\zeta_t)$ is a system of two independent random walks on a single percolation environment. We take the limit in the last equation as~$t \to \infty$ and by Proposition \ref{CollisionRWd12} we conclude that~$\mu$ is supported on~$\{ \overline{0},\overline{1} \} \times \Omega_\mathrm{edge}$. It is easy to see that the edge marginal of the stationary measure~$\mu$ is $\pi^\mathrm{edge}_p$, so $\mu = \alpha \delta_{ \bar{1} } \otimes \pi^\mathrm{edge}_p + (1-\alpha) \delta_{ \bar{0}} \otimes \pi^\mathrm{edge}_p$ for some $\alpha \in [0,1]$. This proves that~$\mathcal{I}_e = \{ \delta_{ \bar{1} } \otimes \pi^\mathrm{edge}_p,\delta_{ \bar{0} } \otimes \pi^\mathrm{edge}_p \}$.

\subsubsection{Dimension 3 and higher}
Fix~$\mu \in \mathcal{I}$ for the rest of this section. We aim to show that there is a measure~$\gamma$ on~[0,1] such that
$$ \mu = \int^1_0 \mu^\mathrm{dyn}_\alpha \; \gamma( \mathrm{d} \alpha).$$
Let~$C \in \mathcal{P}_\mathrm{fin}(\Z^d)$,~$E,F \in \mathcal{P}_\mathrm{fin}(E(\Z^d))$ with~$E \cap F = \varnothing$. Then, by the coupling equation~\eqref{coupling_AET_XET}, 
\begin{equation}\label{Ineq1}
    \begin{aligned}
    \big |\mathbf{E}_{C,E,F} \big[ \Bar{\mu}(\mathcal C_t, \mathcal A_t, \mathcal B_t) \big] - \mathbf{E}_{C,E,F} \big[ \Bar{\mu}(\X_t,\mathbb{A}_t,\mathbb{B}_t) \big] \big| \leq \mathbf{P}_{C,E,F}( \exists t \geq 0: |\X_t| < |\X_0|).    
    \end{aligned}
\end{equation}
We define
$$ g_\mathrm{dyn}(C,E,F) := \mathbf{P}_{C,E,F}( \exists t\geq 0: |\X_t| < |\X_0| ) .  $$
Recalling Corollary~\ref{CoroDuality}, we have from (\ref{Ineq1}) that
\begin{equation}\label{Equationmu}
    \big| \Bar{\mu}(C,E,F) -  \mathbf{E}_{C,E,F} \big[ \Bar{\mu}(\X_t,\mathbb{A}_t,\mathbb{B}_t) \big] \big| \leq g_\mathrm{dyn}(C,E,F).
\end{equation}
Now, our goal is to study $\mathbf{E}_{C,E,F} \big[ \Bar{\mu}(\X_t,\mathbb{A}_t,\mathbb{B}_t) \big]$ as $t \to \infty$.
\begin{lemma}\label{QProperty}
    $\mathbf{E}_{C,E,F} \big[ \Bar{\mu}(\X_t,\mathbb{A}_t,\mathbb{B}_t) \big]$ converges as $t \to \infty$.
\end{lemma}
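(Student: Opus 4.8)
The plan is to reproduce, in the present setting of random walks on a dynamical percolation environment, the Cauchy argument used to establish~\eqref{eq_limit_exists} in the proof of Theorem~\ref{MetaTheorem}. Write $h(t) := \mathbf{E}_{C,E,F}[\bar\mu(\X_t, \mathbb{A}_t, \mathbb{B}_t)]$; since $\bar\mu$ is a conditional probability we have $0 \le h(t) \le 1$. The goal is to show that $h$ is Cauchy as $t\to\infty$, which follows once I prove that there is a function $\epsilon(s) \xrightarrow{s\to\infty} 0$ with $|h(s+t) - h(s)| \le \epsilon(s)$ for all $s,t \ge 0$.

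The three ingredients are the Markov property of the single-environment chain~\eqref{MC_rws_starting_xyEF}, Corollary~\ref{CoroDuality}, and the coupling inequality~\eqref{coupling_AET_XET}. First, by the Markov property applied at time $s$,
$$h(s+t) = \mathbf{E}_{C,E,F}\big[\mathbf{E}_{(\X_s,\mathbb{A}_s,\mathbb{B}_s)}[\bar\mu(\X_t,\mathbb{A}_t,\mathbb{B}_t)]\big].$$
Second, applying Corollary~\ref{CoroDuality} at the random state $(\X_s,\mathbb{A}_s,\mathbb{B}_s)$ --- which is legitimate because $\mathbb{A}_s \cap \mathbb{B}_s = \varnothing$ for every realization --- and then taking $\mathbf{E}_{C,E,F}$ gives
$$h(s) = \mathbf{E}_{C,E,F}[\bar\mu(\X_s,\mathbb{A}_s,\mathbb{B}_s)] = \mathbf{E}_{C,E,F}\big[\mathbf{E}_{(\X_s,\mathbb{A}_s,\mathbb{B}_s)}[\bar\mu(\mathcal C_t,\mathcal A_t,\mathcal B_t)]\big].$$
Subtracting, and bounding the inner difference by~\eqref{coupling_AET_XET} (with $f=\bar\mu$, $|\bar\mu|\le 1$) applied from the state $(\X_s,\mathbb{A}_s,\mathbb{B}_s)$, I obtain
$$|h(s+t) - h(s)| \le \mathbf{E}_{C,E,F}[g_\mathrm{dyn}(\X_s,\mathbb{A}_s,\mathbb{B}_s)] =: \epsilon(s).$$

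It then remains to show $\epsilon(s)\to 0$. By the Markov property and the definition of $g_\mathrm{dyn}$,
$$\epsilon(s) = \mathbf{P}_{C,E,F}(\exists u \ge s: |\X_u| < |\X_s|) \le \sum_{1\le i<j\le |C|}\mathbf{P}_{C,E,F}(\exists u \ge s: X^i_u = X^j_u).$$
Each pair $(X^i_u, X^j_u, \zeta_u)$ is itself a system of two independent walks on a single environment (the walks share $\mathcal U$ but use independent instruction manuals), so Corollary~\ref{CoroBigThm} (with $\ell=0$) bounds each summand by $\sup_{\zeta,x,y}\Prob(\exists u \ge s: X^1_u = X^2_u)$, which tends to $0$ as $s\to\infty$ uniformly over all starting data. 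Hence $\epsilon(s)\to 0$. Finally, for $s_1 < s_2$ one has $|h(s_2)-h(s_1)| \le \epsilon(s_1)$, so $h$ is Cauchy and converges.

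The main obstacle is concentrated in the last step: the decay $\epsilon(s)\to 0$ depends crucially on Corollary~\ref{CoroBigThm}, i.e.\ on the transience-type statement that two independent walks on a shared dynamical percolation environment in dimension $d\ge 3$ eventually stop meeting, a fact whose proof rests on the regeneration-time machinery of Section~\ref{SectionRegenrationTimes}. By contrast, the algebraic skeleton (Markov property, Corollary~\ref{CoroDuality}, coupling) is a direct transcription of the static argument leading to~\eqref{eq_first_twostep}; the only genuine bookkeeping is that the state now carries the revealed-edge sets $(\mathbb{A}_t,\mathbb{B}_t)$ in addition to the walker positions, but since these enter only through the bounded functional $\bar\mu$ and the coupling inequality already accounts for them, they cause no essential difficulty.
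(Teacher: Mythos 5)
Your proposal is correct and follows essentially the same route as the paper's proof: the Markov property combined with Corollary~\ref{CoroDuality} and the coupling bound~\eqref{coupling_AET_XET} (equivalently~\eqref{Ineq1}) gives the Cauchy estimate, and the decay of the error term is obtained exactly as in the paper via a union bound over pairs and Corollary~\ref{CoroBigThm} with $\ell=0$. No substantive differences.
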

\begin{proof}
For any $s,t \geq 0$,
\begin{equation}\label{inequality_bar_mu_chain}
    \begin{aligned}
        \big| \mathbf{E}_{C,E,F} & \big[ \Bar{\mu}(\X_t,\mathbb{A}_t,\mathbb{B}_t) \big] - \mathbf{E}_{C,E,F} \big[ \Bar{\mu}(\X_{t+s},\mathbb{A}_{t+s},\mathbb{B}_{t+s}) \big] \big| \\[.2cm]
        & \stackrel{(\ast)}{=} \Big| \mathbf{E}_{C,E,F} \Big[ \mathbf{E}_{\X_t,\mathbb{A}_t,\mathbb{B}_t}  \big[ \Bar{\mu}(\mathcal C_s,\mathcal A_s,\mathcal B_s) \big] \Big] - \mathbf{E}_{C,E,F} \Big[ \mathbf{E}_{\X_t,\mathbb{A}_t,\mathbb{B}_t} \big[ \Bar{\mu}(\X_s,\mathbb{A}_t,\mathbb{B}_t) \big] \Big] \Big| \\[.2cm]
        & \hspace{-1.5mm} \stackrel{(\ref{Ineq1})}{\leq} \mathbf{E}_{C,E,F} \big[ g(\X_t, \mathbb{A}_t,\mathbb{B}_t) \big], \\
    \end{aligned}
\end{equation}
where $(\ast)$ holds because of Corollary~\ref{CoroDuality} and the Markov property. Convergence follows from proving that
\begin{equation}\label{EgXt}
    \lim_{t \to \infty} \mathbf{E}_{C,E,F}\big[ g_\mathrm{dyn}(\X_t,\mathbb{A}_t,\mathbb{B}_t) \big] = 0.
\end{equation}
By the Markov property, we have that
$$ \mathbf{E}_{C,E,F}\big[ g_\mathrm{dyn}(\X_t,\mathbb{A}_t,\mathbb{B}_t) \big] = \mathbf{P}_{C,E,F}( \exists s \ge t: |\mathcal X_s| < |\mathcal X_t| ).$$
Using Corollary \ref{CoroBigThm} with $\ell = 0$ and a union bound, we have the statement \eqref{EgXt}.
\end{proof}
In view of the above lemma, we define
\begin{equation}\label{q_def_E_CEF}
    q(C,E,F):= \lim_{t \to \infty} \mathbf{E}_{C,E,F} \big[ \Bar{\mu}(\X_t,\mathbb{A}_t,\mathbb{B}_t) \big].
\end{equation}

\begin{lemma}\label{q_CEF_constant}
Given $k \in \N$, there exists a constant $c = c(k)$ such that
$$q(C,E,F) = c(k) \quad \forall \; C \in \mathcal{P}_\mathrm{fin}(\Z^d) \text{ with } |C|=k, \; E,F \in \mathcal{P}_\mathrm{fin}(E(\Z^d)) \text{ with } E\cap F = \varnothing.$$
\end{lemma}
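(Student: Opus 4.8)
The plan is to follow Step~2 of the proof of Theorem~\ref{MetaTheorem}: build a bounded martingale out of $q$, show that its limit is almost surely constant, and then move between states of equal cardinality by a hitting-time argument. Since $\Bar{\mu}$ depends only on the \emph{set} $\X_t=\{X^1_t,\dots,X^k_t\}$ while the set-valued process of independent walkers is not Markov (two walkers may occupy the same site and later separate), I will work with the \emph{labelled} state $(\mathbf X_t,\mathbb A_t,\mathbb B_t)\in(\Z^d)^k\times\mathcal P_\mathrm{fin}(E(\Z^d))^2$, which is a genuine Markov chain by Remark~\ref{remark_mc_xab}. Define $\tilde q(\mathbf x,E,F):=\lim_{t\to\infty}\mathbf E_{\mathbf x,E,F}[\Bar{\mu}(\X_t,\mathbb A_t,\mathbb B_t)]$ (the limit exists as in Lemma~\ref{QProperty}); for distinct $x_1,\dots,x_k$ this coincides with $q(\{x_1,\dots,x_k\},E,F)$ from~\eqref{q_def_E_CEF}. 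Exactly as in Lemma~\ref{QProperty}, bounded convergence and the Markov property give $\mathbf E_{\mathbf x,E,F}[\tilde q(\mathbf X_s,\mathbb A_s,\mathbb B_s)]=\tilde q(\mathbf x,E,F)$ for every $s$, so that $M_t:=\tilde q(\mathbf X_t,\mathbb A_t,\mathbb B_t)$ is a bounded martingale under $\mathbf P_{\mathbf x,E,F}$. By martingale convergence $M_t\to Z$ almost surely, with $\mathbf E[Z]=M_0=\tilde q(\mathbf x,E,F)$.

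The main obstacle is to prove that $Z$ is almost surely constant, i.e.\ that the tail $\sigma$-algebra of $(\mathbf X_t,\mathbb A_t,\mathbb B_t)$ is trivial. The edge-bookkeeping coordinates carry no tail information: every inspected edge is eventually reset by its clock $\mathcal U^e$ and removed from $\mathbb A_t\cup\mathbb B_t$, so the tail is governed by the positions $\mathbf X_t$. For a single walker on its own environment the regeneration times of Section~\ref{SectionRegenrationTimes} exhibit $(X_{\sigma_n})_n$ as a random walk with i.i.d.\ increments on $\Z^d$, whose tail $\sigma$-algebra is trivial. Two features must still be addressed: the $k$ walkers share one environment, and there are several of them. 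For $d\ge 3$ the pairwise distances tend to infinity (Corollary~\ref{CoroBigThm}), so the parts of the environment consulted by different walkers decouple; this is quantified by the single-versus-separate coupling of Lemmas~\ref{couple_single_separate} and~\ref{lemma_main}, which lets me transfer triviality from the separate-environment system (where regeneration applies verbatim) to the single-environment one. Finally, combining the $k$ single-walker tails into the joint tail of the $k$-tuple is the Lindvall--Rogers argument \cite{LindvallRogers1986} already invoked in the proof of Theorem~\ref{MetaTheorem}. This yields $Z=\tilde q(\mathbf x,E,F)$ $\mathbf P_{\mathbf x,E,F}$-almost surely.

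With the limit constant, fix two admissible triples with $|C|=|C'|=k$, choose labellings $\mathbf x,\mathbf x'$, and let $\tau$ be the first time the labelled chain started from $(\mathbf x,E,F)$ hits $(\mathbf x',E',F')$. Irreducibility of the walkers (open paths exist since $p>0$) together with the possibility of resetting the revealed edges to any prescribed finite knowledge gives $\mathbf P_{\mathbf x,E,F}(\tau<\infty)>0$. On $\{\tau<\infty\}$ the strong Markov property makes the post-$\tau$ trajectory a copy of the chain started from $(\mathbf x',E',F')$, whose martingale limit is the constant $\tilde q(\mathbf x',E',F')=q(C',E',F')$; but the limit along the entire path is the constant $q(C,E,F)$, and the two must agree. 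Hence $q(C,E,F)=q(C',E',F')$, and the common value over all triples with $|C|=k$ is the desired $c(k)$.

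A cleaner bookkeeping, if preferred, first removes the dependence on $(E,F)$: coupling the chains from $(\mathbf x,E,F)$ and $(\mathbf x,\varnothing,\varnothing)$ under the same graphical construction and waiting for all edges of $E\cup F$ to refresh (as in Lemma~\ref{ArgumentCouplingRW}) shows $q(C,E,F)=q(C,\varnothing,\varnothing)$, after which only $q(C,\varnothing,\varnothing)=q(C',\varnothing,\varnothing)$ for $|C|=|C'|=k$ remains, handled by the hitting argument above.
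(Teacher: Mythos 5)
Your overall plan is the one the paper follows: turn $q$ into a bounded martingale along the single-environment chain, show the almost-sure limit is constant by passing to the separate-environment system through the coupling of Lemma~\ref{couple_single_separate}, and then identify the constants across all admissible states with $|C|=k$. The genuine gap is in the middle step, where you say the coupling ``lets me transfer triviality from the separate-environment system to the single-environment one.'' That transfer is not available as stated: Lemma~\ref{couple_single_separate} only guarantees that the two chains agree for all time with probability at least $1-2g_{2R}(x,y)$, and by Lemma~\ref{lemma_limit_fl_g} this is close to $1$ only when the starting points are far apart; for nearby starting points the bound is vacuous, and even for distant ones it is never exactly $1$. One cannot deduce triviality of a tail $\sigma$-algebra from agreement with another process on an event of probability $1-\varepsilon$. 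The paper's proof devotes its core to exactly this point via a bootstrap: it sets $h(x,y)$ equal to the probability that $q(\{X_{\sigma_n},Y_{\sigma_n}\},\varnothing,\varnothing)$ converges for the separate-environment walks, derives $h(x,y)\ge 1-2g_{2R}(x,y)$ from the coupling together with the martingale convergence already established for the single-environment chain, and then upgrades this to $h\equiv 1$ by applying the strong Markov property at a regeneration time $\sigma_N$ by which the walkers have separated to distance at least $L$ with probability $1-\epsilon$ (transience of the increment walk, cf.~\eqref{transient_L_distance}). Only after that does a zero-one law identify the limit as a constant (the paper uses Hewitt--Savage on the i.i.d.\ regeneration increments of the pair; your Lindvall--Rogers route for combining single-walker tails would also serve there), and harmonicity of $q(\cdot,\varnothing,\varnothing)$ for the irreducible skeleton walk pins the constant down. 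Without this bootstrap your argument does not close.

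Two smaller points. Your ``cleaner bookkeeping'' reduction of $(E,F)$ to $(\varnothing,\varnothing)$ by running the two chains on the same graphical construction does not quite give $q(C,E,F)=q(C,\varnothing,\varnothing)$: the walkers' trajectories depend on the initial environment on $E\cup F$ before those edges refresh, so the two position processes need not coincide. The paper instead runs a single chain to the regeneration times $\tau_m$ at which $\mathbb{A}_{\tau_m}\cup\mathbb{B}_{\tau_m}=\varnothing$, writes $q(\{x,y\},E,F)=\mathbf{E}[c(|\X_{\tau_m}|)]$ by the strong Markov property as in~\eqref{q_constant_k_general}, and lets $m\to\infty$, using transience to guarantee that the walkers are eventually distinct. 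Likewise, your hitting-time argument needs a word on reachability: the knowledge sets evolve by revealing whole balls $E_1(w,R)$ and deleting refreshed edges, so hitting an arbitrary prescribed $(\mathbf{x}',E',F')$ exactly requires justification; it is safer to aim only for states of the form $(\mathbf{x}',\varnothing,\varnothing)$, which are reached by walking to $\mathbf{x}'$ and then pausing while all revealed edges refresh.
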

\begin{proof}
We present the proof for $k=2$ but it works for any $k \in \N$ with minor modifications. We first consider the case $E = F = \varnothing$. We aim to use the coupling of Lemma~\ref{couple_single_separate}.

Let $x,y \in \Z^d$. By the Markov property, it follows that~$q(\{X^1_t, X^2_t\}, \mathbb{A}_t, \mathbb{B}_t), \; t \ge 0,$ is a bounded martingale. The martingale convergence theorem implies that 
\begin{equation}\label{convergence_as_q}
    q( \{X^1_t, X^2_t\} , \mathbb{A}_t, \mathbb{B}_t) \text{ converges } \mathbf{P}_{x,y,\varnothing,\varnothing}\text{-a.s. as } t \to \infty.  
\end{equation}
At this point, we want to argue that the limit is almost surely constant. The way we did this when treating the classical voter model was by appealing to a 0-1 law. The slight issue we have here is that, since~$(X^1_t)$ and~$(X^2_t)$ share the same environment, they do not evolve independently, and there is no clear way to use regeneration times to argue that independent random walks can be embedded inside these processes. We now work around this obstacle.

Let $(X_t, A^1_t, B^2_t, Y_t, A^2_t, B^2_t)_{t \ge 0}$ be the Markov chain~\eqref{MC_separate_RW}. We want to prove that the process~$q \big( \{ X_t, Y_t \}, A^1_t \cup A^2_t, B^1_t \cup B^2_t \big)$ converges a.s. as $t \to \infty$. From Lemma~\ref{couple_single_separate}, there exists a coupling $(\mathbf{X}_t,\mathbf{Y}_t)$ under~$\widehat{\Prob}$ such that $\mathbf{X}_t$ and $\mathbf{Y}_t$ have jumping rate $r_\mathrm{single}$ and $r_\mathrm{separate}$, respectively, and
\begin{equation}\label{coupling_ineq_1_XY_t}
    \widehat{\Prob}( \exists t \ge 0: \mathbf{X}_t \neq \mathbf{Y}_t ) \leq 2 g_{2R}(x,y).
\end{equation}
Recall the regeneration times $\{\sigma_n\}_{n \in \N}$ from~\eqref{reg_times_def}. We have that
\begin{equation}\label{h_def_1_2g}
    \begin{aligned}
        h(x,y) & := \Prob \big( q \big( \{ X_{\sigma_n}, Y_{\sigma_n} \}, \varnothing, \varnothing \big) \text{ converges as } n \to \infty \big) \\[0.2cm]
        & \geq \Prob \big( q \big( \{ X_t, Y_t \}, A^1_t \cup A^2_t, B^1_t \cup B^2_t \big) \text{ converges as } t \to \infty \big) \\[0.2cm]
        & \stackrel{(\ast)} \geq \widehat{\Prob}( \forall \; t \ge 0, \; \mathbf{X}_t = \mathbf{Y}_t ) \stackrel{\eqref{coupling_ineq_1_XY_t}}\ge 1 - 2g_{2R}(x,y),
    \end{aligned}
\end{equation}      
where $(\ast)$ holds by noticing that if~\eqref{convergence_as_q} holds, then evaluating $q$ at the corresponding process constructed from $(\mathbf{X}_t)$ will also converges a.s. as $t \to \infty$. Given $\epsilon>0$, by Lemma~\ref{lemma_limit_fl_g}, there exists $L>0$ such that for~$|x-y|_1 \ge L$,~$g_{2R}(x,y)<\epsilon/2$. Assume now that $|x-y| < L$. Recall that the random walk~$X_{\sigma_n}-Y_{\sigma_n}$ has increments with distribution~$U$ as in~\eqref{increments_rw_U}, then it is transient (See Theorem~4.1.1. of~\cite{Lawler2010}). Hence, there exists $N \in \N$ such that
\begin{equation}\label{transient_L_distance}
    \Prob ( |X_{\sigma_N}-Y_{\sigma_N}|_1 \ge L ) > 1 - \epsilon.
\end{equation}
By the Strong Markov property 
\begin{align*}
   h(x,y) = \E [ h(X_{\sigma_N}, Y_{\sigma_N}) ] & \ge \E [ h(X_{\sigma_N}, Y_{\sigma_N}) \cdot \mathds{1}\{ |X_{\sigma_N}-Y_{\sigma_N}|_1 \ge L \} ] \\
   & \hspace{-2.4mm} \stackrel{\eqref{h_def_1_2g}} \ge (1-\epsilon) \cdot \Prob ( |X_{\sigma_N}-Y_{\sigma_N}|_1 \ge L ) \stackrel{\eqref{transient_L_distance}}> 1- 2\epsilon. 
\end{align*}
Therefore, the sequence~$q \big( \{ X_{\sigma_n},Y_{\sigma_n} \}, \varnothing, \varnothing\big)$ converges a.s. as $n \to \infty$ for any initial positions. The Hewitt-Savage 0-1 law implies that for any $x,y \in \Z^d$, 
$$ q(\{x,y\}, \varnothing, \varnothing) = \lim_{n \to \infty} q \big( \{ X_{\sigma_n},Y_{\sigma_n} \}, \varnothing, \varnothing\big) \; \text{ a.s.}$$
This proves that $q$ is harmonic for the irreducible random walk~$\big(X_{\sigma_n},Y_{\sigma_n}\big)$ on~$(\Z^d)^2$, we conclude that there exists a constant $c = c(2)$ such that~$q(\{x,y\}, \varnothing, \varnothing) = c$ for any~$x,y \in \Z^d$.

Turning now to the general case, we want to prove that~$q(\{x,y\},E,F) = c(2)$ for any~$E,F \in \mathcal{P}_\mathrm{fin}(E(\Z^d))$ with $E \cap F = \varnothing$. To do so, we extend the definition of regeneration times (Section \ref{SectionRegenrationTimes}) for two independent random walks on a single dynamical percolation environment. Let $(X^1_t,X^2_t,\mathbb{A}_t, \mathbb{B}_t)_{t \ge 0}$ be the Markov chain with jumping rate~\eqref{matrix_of_rates_xoc_process} with~$k=2$. We define the stopping times~$\{\tau_n\}_{n \in \N_0}$ (cf.~\eqref{reg_times_def}), by setting 
$$\tau_0 = \inf\{ s \geq 0: \mathbb{A}_s = \mathbb{B}_s = \varnothing \},$$
and for $n \ge 1$,
$$ \tau_n = \inf\{ m \in \N : m > \tau_{n-1} \text{ and } \mathbb{A}_m \cup \mathbb{B}_m = \varnothing \}. $$
The same standard arguments to obtain Properties 1 and 2 for the regeneration times~$\sigma_n$~$\eqref{reg_times_def}$ show that $\tau_n < \infty$ a.s. for all $n$. Then, for any $m \in \N$,
\begin{equation}\label{q_constant_k_general}
\begin{aligned}
    q(\{x,y\} ,E,F) & \stackrel{\eqref{q_def_E_CEF}}= \lim_{t \to \infty} \mathbf{E}_{x,y,E,F} \Big[ \bar{\mu} \big( \{X^1_{\tau_m + t}, X^2_{\tau_m + t}\},\mathbb{A}_{\tau_m + t},\mathbb{B}_{\tau_m + t} \big) \Big]  \\
    & \hspace{1.8mm} \stackrel{(\ast)} = \mathbf{E}_{x,y,E,F} \Big[ q \big(\{X^1_{\tau_m}, X^2_{\tau_m}\}, \varnothing, \varnothing \big) \Big] = \mathbf{E}_{x,y,E,F} \Big[c \big( \big| \{X^1_{\tau_m}, X^2_{\tau_m}\} \big| \big) \Big],
\end{aligned}
\end{equation}
where $(\ast)$ holds by the Strong Markov property. The transience of the random walks implies that taking the limit on~\eqref{q_constant_k_general} as $m \to \infty$, we have that~$q(\{x,y\},E,F) = c(2)$.
\end{proof} 
As a consequence of this lemma, we take the limit on (\ref{Equationmu}) as $t \to \infty$ to obtain that
\begin{equation}\label{FinettiIn}
    \big|\Bar{\mu}(C,E,F) - c(|C|) \big| \leq g_\mathrm{dyn}(C,E,F).
\end{equation}
Given $\mu \in \mathcal I$ and $E,F \in \mathcal P_\mathrm{fin}(E(\Z^d))$ with $E \cap F = \varnothing$, we consider the random configuration
$$ (\eta_{EF},\zeta_{EF}) \sim \mu( \hspace{1mm} \cdot \mid \{(\eta,\zeta): \zeta \equiv 1 \text{ on } E, \zeta \equiv 0 \text{ on } F \}).$$
\begin{lemma}
    There exists a probability measure $\gamma$ on $[0,1]$ such that
    $$ \mu = \int^1_0 \mu^\mathrm{dyn}_\alpha \; \gamma( \mathrm{d}\alpha ). $$
\end{lemma}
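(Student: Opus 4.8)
The plan is to mirror the three steps of the proof of Theorem~\ref{MetaTheorem}, using the constant $c(k)$ supplied by Lemma~\ref{q_CEF_constant} in place of the exchangeable measure of the classical argument. Two preliminary remarks fix the target. First, since the edge coordinate evolves autonomously as a dynamical percolation, whose unique stationary law is $\pi^\mathrm{edge}_p$, the edge marginal of any $\mu\in\mathcal I$ is $\pi^\mathrm{edge}_p$; hence $\widehat\mu(C,E,F)=p^{|E|}(1-p)^{|F|}\,\bar\mu(C,E,F)$, and it is enough to pin down $\bar\mu(C,E,F)$ for all admissible $C,E,F$. Second, by \eqref{mu_alpha_def_with_A_infty} the candidate measures satisfy $\bar\mu^{\mathrm{dyn}}_\alpha(C,E,F)=\mathbf E_{C,E,F}[\alpha^{|\mathcal C_\infty|}]$ and $\widehat{\mu^{\mathrm{dyn}}_\alpha}(C,E,F)=p^{|E|}(1-p)^{|F|}\bar\mu^{\mathrm{dyn}}_\alpha(C,E,F)$.

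First I would show that $c(\cdot)$ is a Hausdorff moment sequence: there is a probability $\gamma$ on $[0,1]$ with $c(k)=\int_0^1\alpha^k\,\gamma(\mathrm d\alpha)$. Exactly as in Step~1 of the proof of Theorem~\ref{MetaTheorem}, the inclusion--exclusion formula produces, for each $t$, a genuine probability measure $\nu_t$ on $\Omega_\mathrm{site}\times\Omega_\mathrm{edge}$ whose cylinder probabilities are $\widehat{\nu_t}(C,E,F)=p^{|E|}(1-p)^{|F|}\,\mathbf E_{C,E,F}[\bar\mu(\X_t,\mathbb A_t,\mathbb B_t)]$; taking $C=\varnothing$ confirms that the edge marginal of $\nu_t$ is $\pi^\mathrm{edge}_p$. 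By \eqref{q_def_E_CEF} the family $\nu_t$ converges weakly as $t\to\infty$ to a probability $\nu$ with $\bar\nu(C,E,F)=q(C,E,F)$. Lemma~\ref{q_CEF_constant} states that $q(C,E,F)=c(|C|)$ depends on neither the labels of $C$ nor the edge data, so $\nu=\nu^\mathrm{site}\otimes\pi^\mathrm{edge}_p$ with $\nu^\mathrm{site}$ an exchangeable measure on $\Omega_\mathrm{site}$ satisfying $\nu^\mathrm{site}(\eta\equiv 1\text{ on }C)=c(|C|)$. De~Finetti's theorem then furnishes the desired $\gamma$.

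With $\gamma$ in hand I would assemble the decomposition as in Step~3. Corollary~\ref{CoroDuality} gives the exact identity $\bar\mu(C,E,F)=\mathbf E_{C,E,F}[\bar\mu(\mathcal C_s,\mathcal A_s,\mathcal B_s)]$; inserting \eqref{FinettiIn} inside the expectation and writing $c(|\mathcal C_s|)=\int_0^1\alpha^{|\mathcal C_s|}\gamma(\mathrm d\alpha)$ yields, via Fubini,
\[
\Big|\bar\mu(C,E,F)-\int_0^1\mathbf E_{C,E,F}\big[\alpha^{|\mathcal C_s|}\big]\,\gamma(\mathrm d\alpha)\Big|\le \mathbf E_{C,E,F}\big[g_\mathrm{dyn}(\mathcal C_s,\mathcal A_s,\mathcal B_s)\big].
\]
As $s\to\infty$ we have $|\mathcal C_s|\downarrow|\mathcal C_\infty|$, so bounded convergence sends the left integral to $\int_0^1\bar\mu^{\mathrm{dyn}}_\alpha(C,E,F)\,\gamma(\mathrm d\alpha)$. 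For the error term, the coupling behind \eqref{coupling_AET_XET} identifies $g_\mathrm{dyn}(\mathcal C_s,\mathcal A_s,\mathcal B_s)$ with the probability that coalescing walks started afresh from $(\mathcal C_s,\mathcal A_s,\mathcal B_s)$ ever coalesce; by the Markov property its expectation equals $\mathbf P_{C,E,F}(|\mathcal C_\infty|<|\mathcal C_s|)$, which tends to $0$ since $(\mathcal C_t)$ undergoes at most $|C|-1$ coalescences and is therefore eventually constant. Hence $\bar\mu(C,E,F)=\int_0^1\bar\mu^{\mathrm{dyn}}_\alpha(C,E,F)\,\gamma(\mathrm d\alpha)$; multiplying by $p^{|E|}(1-p)^{|F|}$ gives $\widehat\mu(C,E,F)=\int_0^1\widehat{\mu^{\mathrm{dyn}}_\alpha}(C,E,F)\,\gamma(\mathrm d\alpha)$ for all admissible $C,E,F$, and since these cylinder probabilities determine a measure on $\Omega_\mathrm{site}\times\Omega_\mathrm{edge}$, we conclude $\mu=\int_0^1\mu^{\mathrm{dyn}}_\alpha\,\gamma(\mathrm d\alpha)$.

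The main obstacle is the de~Finetti step, and more precisely the assertion that $c$ is an honest moment sequence. This is equivalent to nonnegativity of the inclusion--exclusion coefficients built from $c$, and that nonnegativity is not visible from the bare formula $q(C,E,F)=c(|C|)$; it must be imported from the fact that each $\nu_t$ is a genuine probability measure. I would therefore spend the most care verifying that the family $p^{|E|}(1-p)^{|F|}\,\mathbf E_{C,E,F}[\bar\mu(\X_t,\mathbb A_t,\mathbb B_t)]$ is consistent and defines a measure (the natural way being to exhibit $\nu_t$ as the joint law of the environment together with the field read off $\eta\sim\mu$ along independent, non-coalescing range-$R$ walks). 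By contrast, once $c$ is known to be a moment sequence, the assembly above is routine bounded-convergence bookkeeping, and the vanishing of the error term is immediate from the finiteness of the number of coalescences.
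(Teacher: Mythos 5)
Your proposal is correct and follows essentially the same route as the paper: the paper likewise realizes the limit $c(|C|)$ as an exchangeable probability measure by reading a sample of $\mu$ along independent non-coalescing walks (its $\nu^{\mathrm{dyn}}_t$ built from $\Xi^{\mathbb A_t,\mathbb B_t}_t$ --- exactly the construction you flag as the delicate step), applies de Finetti to obtain $\gamma$, and then compares $\mu$ with $\int_0^1\mu^{\mathrm{dyn}}_\alpha\,\gamma(\mathrm d\alpha)$ using Corollary~\ref{CoroDuality}, \eqref{FinettiIn} and the coupling. The only (harmless) deviation is in the final error term, where you send $\mathbf E_{C,E,F}[g_{\mathrm{dyn}}(\mathcal C_s,\mathcal A_s,\mathcal B_s)]$ to zero via the eventual constancy of $|\mathcal C_s|$, whereas the paper bounds $|\bar\mu-\bar\mu_*|$ by $2\,\mathbf E[g_{\mathrm{dyn}}(\mathcal X_t,\mathbb A_t,\mathbb B_t)]$ and invokes \eqref{EgXt}.
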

\begin{proof}
We enlarge the probability space $\Prob$ so that $\{(\eta_{EF},\zeta_{EF}): E,F \in \mathcal P_\mathrm{fin}(E(\Z^d)) \text{ with } E \cap F = \varnothing\}$ are also defined and are independent of the graphical construction $\mathcal U$ and the instruction manuals~$\mathcal T_i$. We set $\zeta_0 \sim \pi^\mathrm{edge}_p$. Let $\Xi^{EF}_t(x) = \eta_{EF}(\mathbb{X}_t(\zeta_0,x))$ for every $x \in \Z^d$ and $t \ge 0$, we define the measure~$\nu^\mathrm{dyn}_t$ on~$\Omega_\mathrm{site}$ by 
$$ \nu^\mathrm{dyn}_t( \cdot ) = \E[ \Prob( \Xi^{\mathbb A_t, \mathbb B_t}_t \in \cdot ) | \mathbb{A_t},\mathbb{B_t} ].$$
For any $k \in \N$, we have that
$$ \nu^\mathrm{dyn}_t \big(\eta: \eta(x_1) = \cdots = \eta(x_k) = 1 \big) = \E \big[ \bar{\mu} \big(\mathbb X_t(\zeta_0,\mathbf{x}),\mathbb A_t(\zeta_0,\mathbf{x}),\mathbb{B}_t(\zeta_0,\mathbf{x}) \big) \big].$$
Lemma~\ref{QProperty} implies that $\nu^\mathrm{dyn}_t$ converges weakly to a measure~$\nu^\mathrm{dyn}_\infty$ as $t \to \infty$. Hence,
\begin{equation}\label{nu_dyn_def_limit}
    \nu^\mathrm{dyn}_\infty \big(\eta: \eta(x_1) = \cdots = \eta(x_k) = 1 \big) = \lim_{t \to \infty}\E \big[ \bar{\mu}\big(\mathbb X_t(\zeta_0,\mathbf{x}),\mathbb A_t(\zeta_0,\mathbf{x}),\mathbb{B}_t(\zeta_0,\mathbf{x}) \big) \big] = c(k).
\end{equation}
From Lemma~\ref{q_CEF_constant}, we deduce that $\nu^\mathrm{dyn}_\infty$ is exchangeable, then, by de~Finetti's theorem there exists a probability measure $\gamma$ on $[0,1]$ so that
\begin{equation}\label{QuickEq1DeFinetti}
    \nu^\mathrm{dyn}_\infty = \int^1_0 \pi^\mathrm{edge}_\alpha \; \gamma(d\alpha).
\end{equation}
We define
\begin{equation}\label{mu_dyn_ast}
    \mu_\ast = \int^1_0 \mu^{\mathrm{dyn}}_\alpha \; \gamma(d \alpha),
\end{equation}
To complete the proof of the lemma we need to prove that $\mu = \mu_\ast$. We have that
$$\big|\Bar{\mu}^\mathrm{dyn}_\alpha(C,E,F) - \alpha^{|A|} \big| \stackrel{(\ref{mu_alpha_def_with_A_infty})}{=} \mathbf{E}_{C,E,F} \big[ \big| \alpha^{|\mathcal C_\infty|} - \alpha^{|A|} \big| \big] \leq g_\mathrm{dyn}(C,E,F).$$
Integrating over $\gamma$ and using \eqref{nu_dyn_def_limit}, \eqref{QuickEq1DeFinetti} and \eqref{mu_dyn_ast} we have that
$$\big|\Bar{\mu}_*(C,E,F) - c(|C|) \big| \leq g_\mathrm{dyn}(C,E,F).$$
Comparing this with (\ref{FinettiIn}), it follows that
\begin{equation}\label{FinalFinetti}
    \big| \Bar{\mu}_*(C,E,F) - \Bar{\mu}(C,E,F) \big| \leq 2 g_\mathrm{dyn}(C,E,F).
\end{equation}
Since $\mu_*$ is also stationary for the voter model on dynamical percolation, by Corollary~\ref{CoroDuality}
\begin{equation}\label{Eq2}
    \begin{aligned}
    | \Bar{\mu}_*(C,E,F) - \Bar{\mu}(C,E,F) | &  = \big| \mathbf{E}_{C,E,F} \big[ \Bar{\mu}_*(\mathcal C_t, \mathcal A_t, \mathcal B_t)\big] - \mathbf{E}_{C,E,F} \big[ \Bar{\mu}(\mathcal C_t, \mathcal A_t, \mathcal B_t) \big] \big| \\
    & \hspace{-2.2 mm} \stackrel{(\ref{FinalFinetti})}{\leq} 2 \mathbf{E}_{C,E,F} \big[ g_\mathrm{dyn}(\mathcal C_t, \mathcal A_t, \mathcal B_t) \big] \stackrel{(\ast)} \leq 2 \E \big[ g_\mathrm{dyn}(\X_t, \mathbb{A}_t,\mathbb B_t) \big],
    \end{aligned}
\end{equation}
where $(\ast)$ holds by the coupling described at the beginning of this section. It follows from~\eqref{EgXt} that $\mu = \mu_*$. 
\end{proof}
By the Krein-Milman theorem
$$ \mathcal{I}_e \subseteq \mathrm{cl}(\{ \mu^{\mathrm{dyn}}_\alpha: \alpha \in [0,1] \}).$$
It can be checked that the family $\{ \mu^{\mathrm{dyn}}_\alpha: \alpha \in [0,1] \}$ is closed. Conversely, the fact that~$\mu_\alpha \in \mathcal{I}_e$ for every $\alpha$ follows from the same method described in the proof of Theorem \ref{MetaTheorem}.

\subsection{Ergodicity of the stationary measures}\label{section_ergodicty_dyn_perc}
We start introducing some notation. For~$x \in \Z^d, e =\{x_1,x_2\} \in E(\Z^d)$, we define the shift transformation~$\tau_x$ on~$\Omega_\mathrm{site} \times \Omega_\mathrm{edge}$ by
$$ \big(\tau_x (\eta,\zeta)\big)(y,e) = (\eta,\zeta)(y-x,e-x),$$
where~$e-x = \{x_1-x,x_2-x\}$. This shift map induces the shift translation of subsets of~$\Omega_\mathrm{site} \times \Omega_\mathrm{edge}$. In particular, for $A \in \mathcal{P}_\mathrm{fin}(\Z^d)$ and $E \in \mathcal{P}_\mathrm{fin}(E(\Z^d))$,
\begin{equation*}
    \tau_x(\{ (\eta,\zeta): \eta \equiv 1 \text{ on } A, \zeta \equiv 1 \text{ on } E \}) = \{ (\eta,\zeta): \eta \equiv 1 \text{ on } A+x, \zeta \equiv 1 \text{ on } E+x \},
\end{equation*}
where $A + x = \{y + x: y \in A\}$ and $E+x = \{ e + x: x \in E \}$. 

For dimensions 1 and 2, spatial ergodicity follows immediately since for any $\alpha \in [0,1]$, the measure~$\mu^\mathrm{dyn}_\alpha$ is a convex combination of the consensus measures. Now we focus on dimension 3 and higher. For every~$\alpha \in [0, 1]$, the measures $\mu^{\mathrm{dyn}}_\alpha$ are easily seen to be spatial invariant, spatial ergodicity will follow from the mixing property: for any $A,B \in \mathcal{P}_\mathrm{fin}(\Z^d)$ and $E_1, E_2 \in \mathcal{P}_\mathrm{fin}( E(\Z^d))$, 
\begin{equation}\label{MixingProperty}
    \lim_{x \to \infty}\widehat{\mu}^{\mathrm{dyn}}_\alpha( A \cup (B + x), E_1 \cup (E_2 + x), \varnothing ) = \widehat{\mu}^\mathrm{dyn}_\alpha (A,E_1,\varnothing \big) \cdot \widehat{\mu}^\mathrm{dyn}_\alpha( B, E_2, \varnothing).
\end{equation}
We couple the dual processes~$(\mathcal C^1_t, \mathcal A^1_t, \mathcal B^1_t), (\mathcal C^2_t, \mathcal A^2_t,\mathcal B^2_t)$ and $(\mathcal C^3_t, \mathcal A^3_t,\mathcal B^3_t)$ in the probability space $\mathbf{P}$ with the same law given by the jump rates \eqref{coalescing_jump_rate} such that the following holds: 
\begin{itemize}
    \item $(\mathcal C^1_0, \mathcal A^1_0, \mathcal B^1_0) = ( A,E_1,\varnothing)$, $(\mathcal C^2_0, \mathcal A^2_0, \mathcal B^2_0) = (B,E_2,\varnothing)$, $(\mathcal C^3_0, \mathcal A^3_0, \mathcal B^3_0) = (A \cup B, E_1 \cup E_2,\varnothing)$;
    \item $(\mathcal C^i_t, \mathcal A^i_t, \mathcal B^i_t)$, $i=1,2$ are independent;
    \item $(\mathcal C^3_t, \mathcal A^3_t, \mathcal B^3_t)$ is defined as follows: let
$$\rho := \inf \{ t > 0: \min \big(\mathrm{dist}( \mathcal C^1_t, \mathcal A^2_t \cup \mathcal B^2_t ),\hspace{0.5mm} \mathrm{dist}( \mathcal C^2_t, \mathcal A^1_t \cup \mathcal B^1_t ),\hspace{0.5mm} \mathrm{dist}(\mathcal C^1_t, \mathcal C^2_t) \big) \leq R \},$$ 
\begin{enumerate}
    \item for $t \leq \rho$, $ \mathcal C^3_t = \mathcal C^1_t \cup \mathcal C^2_t, \hspace{1mm} \mathcal A^3_t = \mathcal A^1_t \cup \mathcal A^2_t, \hspace{1mm} \mathcal B^3_t = \mathcal B^1_t \cup \mathcal B^2_t$,
    \item for $t > \rho$, $(\mathcal C^3_t,\mathcal A^3_t,\mathcal B^3_t)$ moves independently of $(\mathcal C^i_t,\mathcal A^i_t,\mathcal B^i_t)$, $i=1,2$.
\end{enumerate}
\end{itemize}
Then
\begin{equation}\label{ErgodicityEquation}
    \begin{aligned}
    \big| \widehat{\mu}^\mathrm{dyn}_\alpha(A \cup B, E_1 \cup E_2, \varnothing)  - \widehat{\mu}^\mathrm{dyn}_\alpha(A,E_1,\varnothing) \cdot \widehat{\mu}^\mathrm{dyn}_\alpha(B,E_2,\varnothing) \big| & = \textbf{E} \big[ \alpha^{|\mathcal C^3_\infty|} \big] - \textbf{E} \big[ \alpha^{ |\mathcal C^1_\infty| + |\mathcal C^2_\infty| } \big] \\
    & \leq \textbf{P}( \rho < \infty ).
    % & \stackrel{(\ast)}\leq p^{-(|E_1|+|E_2|)} \cdot \textbf{P}( \exists t \geq 0: \mathrm{dist}( A_t \cup E^1_t \cup F^1_t, B_t \cup E^2_t \cup F^2_t ) \leq R ) \\
    % & \hspace{-1mm} \stackrel{(\ast \ast)} \leq p^{-(|E_1|+|E_2|)} \cdot \sum_{u \in A,v \in B} \textbf{P}( \exists t \geq 0: \mathrm{dist}( Y^u_t \cup E^u_t \cup F^u_t, Y^v_t \cup E^v_t \cup F^v_t ) \leq R ), \\
    \end{aligned}
\end{equation}
By the construction, we note that the coupling breaks when one of the walkers of $(\mathcal C^i_t)$ is at a distance less than $R$ from $E_j$ or from $(\mathcal{A}^j_t \cup \mathcal B^j_t)$, $(i,j) \in \{(1,2);(2,1)\}$, or a walker from $(\mathcal C^1_t)$ and another from $(\mathcal C^2_t)$ are at a distance less than $R$. This motivates the definition of the following functions: let $\mathcal U$ be a graphical construction for dynamical percolation, and assume that the environment starts at stationarity, and~$\mathcal T_1, \mathcal T_2$ be instruction manuals for random walks with range $R$. We write for all $t \ge 0$,
$$ (X^1_t,X^2_t) = (\mathbb{X}^{\mathcal U, \mathcal T_1}_t(\zeta_0,x),\mathbb{X}^{\mathcal U, \mathcal T_2}_t(\zeta_0,x)) $$
and
$$ (F^1_t,F^2_t) = \big( \mathbb{A}^{\mathcal U, \mathcal T_1}_t(\zeta_0,x) \cup \mathbb{B}^{\mathcal U, \mathcal T_1}_t(\zeta_0,x), \hspace{1mm} \mathbb{A}^{\mathcal U, \mathcal T_2}_t(\zeta_0,y) \cup \mathbb{B}^{\mathcal U, \mathcal T_2}_t(\zeta_0,y) \big)$$
Then, for any $x \in \Z^d$ and $E \in \mathcal{P}_\mathrm{fin}(\Z^d)$, put 
$$\varphi(x,E) := \Prob( \exists t \geq 0: \mathrm{dist}(X^1_t,E) \leq R \text{ before all the edges in } E \text{ refresh} ).$$
and
$$ \tilde f_R(x,y) := \Prob(  \exists t\ge 0: \min \big( |X^1_t - X^2_t|_1, \hspace{0.5mm} \mathrm{dist}(X_t,F^2_t), \hspace{0.5mm} \mathrm{dist}(Y_t,F^1_t) \big) \leq R ).$$
Having in mind the moment when the coupling breaks we have the following inequality
\begin{equation}\label{ergo_result}
    \textbf{P}( \rho < \infty ) \leq p^{-|E_1|} \cdot \sum_{u \in A} \varphi(u,E_2) + p^{-|E_2|} \cdot \sum_{u \in B} \varphi(u,E_1) + p^{-|E_1 \cup E_2|} \sum_{ u \in A,v \in B } \tilde f_R(u,v).
\end{equation}
We prove that the three sums on the right-hand side converge to 0 as~$|x|_1 \to \infty$ when replacing~$B$ and~$E_2$ by~$B+x$ and~$E_2+x$, respectively. Let
\begin{align*}
    &\tau_E := \inf\{ t\geq 0: \text{each } e \in E \text{ refresh by time } t \},\\[.2cm]
    & L(\tau_E) := |\{ \text{attempted steps by a random walk by time } \tau_E \} |. 
\end{align*}
Then, 
\begin{equation}\label{ergo_result_1}
    \begin{aligned}
        \varphi(u,E) & \leq \Prob \big( R \cdot L(\tau_E) \geq \mathrm{dist}(u,E) \big) \leq \frac{R}{\mathrm{dist}(u,E)} \cdot \E[L(\tau_{E})] \\
        & = \frac{R}{\mathrm{dist}(u,E)} \cdot \E[\tau_{E}] \leq \frac{R}{\mathrm{dist}(u,E)} \cdot c(|E|), 
    \end{aligned}
\end{equation}
hence~$\varphi(u,E_2 + x)$ and~$\varphi(u+x,E_1)$ converge to 0 as~$|x|_1 \to \infty$. Convergence of~$\tilde f_R(u,v+x)$ to~0 as $|x|_1 \to \infty$ follows from Lemma~\ref{lemma_main} and Lemma~\ref{lemma_limit_fl_g}. Plugging this back in \eqref{ergo_result} and then in \eqref{ErgodicityEquation}, we obtain the mixing property.\\[-.2cm]

\textbf{Acknowledgements:} I am grateful to my advisor, Daniel Valesin, for suggesting the problem and for valuable discussions. I especially appreciate his careful reading of several versions of the manuscript, as well as his detailed comments and suggestions, which greatly contributed to the improvement of the paper. This work was carried out under the financial support of the CogniGron research center and the Ubbo Emmius Funds (Univ. of Groningen); the author is thankful for this support.

\bibliographystyle{plain}
\bibliography{Ref.bib}
\end{document}